\documentclass[12pt]{amsart}
\usepackage{a4wide}
\usepackage[latin9]{inputenc}
\usepackage{amsmath}
\usepackage{amsfonts}
\usepackage{amssymb}
\usepackage{amsthm}
\usepackage{subfigure}
\usepackage[usenames,dvipsnames]{color}
\usepackage{pstricks}
\usepackage{graphicx}
\usepackage[all]{xy}
\newtheorem{theo}{Theorem}[section]
\newtheorem{prop}[theo]{Proposition}
\newtheorem{coro}[theo]{Corollary}
\newtheorem{lemm}[theo]{Lemma}

\theoremstyle{definition}
\newtheorem{def1}[theo]{Definition}
\theoremstyle{remark}
\newtheorem{rema}[theo]{Remark}

\newcommand{\nwc}{\newcommand}
\nwc{\eps}{\epsilon}
\nwc{\ep}{\epsilon}
\nwc{\vareps}{\varepsilon}
\nwc{\Oph}{\operatorname{Op}_\hbar}
\nwc{\la}{\langle}
\nwc{\ra}{\rangle}

\nwc{\mf}{\mathbf} 
\nwc{\blds}{\boldsymbol} 
\nwc{\ml}{\mathcal} 

\nwc{\defeq}{\stackrel{\rm{def}}{=}}

\nwc{\cE}{\ml{E}}
\nwc{\cN}{\ml{N}}
\nwc{\cO}{\ml{O}}
\nwc{\cP}{\ml{P}}
\nwc{\cU}{\ml{U}}
\nwc{\cV}{\ml{V}}
\nwc{\cW}{\ml{W}}
\nwc{\tU}{\widetilde{U}}
\nwc{\IN}{\mathbb{N}}
\nwc{\IR}{\mathbb{R}}
\nwc{\IZ}{\mathbb{Z}}
\nwc{\IC}{\mathbb{C}}
\nwc{\IT}{\mathbb{T}}
\nwc{\tP}{\widetilde{P}}
\nwc{\tPi}{\widetilde{\Pi}}
\nwc{\tV}{\widetilde{V}}
\nwc{\supp}{\operatorname{supp}}
\nwc{\rest}{\restriction}



\begin{document}

\title[Conormal cycle of random nodal sets]{Equidistribution of the conormal cycle of random nodal sets}

\author[Nguyen Viet Dang]{Nguyen Viet Dang}

\author[Gabriel Rivi\`ere]{Gabriel Rivi\`ere}
\address{Laboratoire Paul Painlev\'e (U.M.R. CNRS 8524), U.F.R. de Math\'ematiques, Universit\'e Lille 1, 59655 Villeneuve d'Ascq Cedex, France}

\email{nguyen-viet.dang@math.univ-lille1.fr}

\email{gabriel.riviere@math.univ-lille1.fr}

\begin{abstract} 
We study the asymptotic properties of the conormal cycle of nodal sets associated to a random superposition of eigenfunctions of the Laplacian on a smooth compact Riemannian manifold without boundary. In the case where the dimension is odd, we show that the expectation of the corresponding current of integration equidistributes on the fibers of the cotangent bundle. In the case where the dimension is even, we obtain an upper bound of lower order on the expectation. Using recent results of Alesker, we also deduce some properties on the asymptotic expectation of any smooth valuation including the Euler characteristic of random nodal sets. 
\end{abstract}

\maketitle

\section{Introduction}

We consider $(M,g)$ a smooth ($\ml{C}^{\infty}$) compact connected oriented Riemannian manifold without boundary, and we study the nonzero solutions of
\begin{equation}\label{e:eigenmode}-\Delta_g e_{\lambda}=\lambda^2e_{\lambda},\end{equation}                                                                              
where $\Delta_g$ is the Laplace-Beltrami operator induced by the Riemannian metric $g$ and $\lambda>0$. 
Our geometric assumption implies that there exists a non decreasing sequence 
$$0=\lambda_1<\lambda_2\leq\lambda_3\leq\ldots\leq \lambda_j \ldots\longrightarrow +\infty,$$
and an orthonormal basis $(e_j)_{j\geq 1}$ of $L^2(M)$ such that
$$\forall j\geq 1,\ -\Delta_g e_j=\lambda_j^2 e_j.$$
More generally, we are in fact interested in finite superpositions of such solutions, i.e.
\begin{equation}\label{e:quasimode}f\in\ml{H}_{\Lambda}:=\mathbf{1}_{[0,\Lambda^2]}(-\Delta_g)L^2(M).\end{equation}
Recall that, if we denote by $N(\Lambda)$ the dimension of $\ml{H}_{\Lambda}$, one has the following H\"ormander-Weyl asymptotics~\cite{Ho68}
\begin{equation}\label{e:weyl}
 N(\Lambda)= \frac{\text{Vol}_g(M)}{2^n\pi^{\frac{n}{2}}\Gamma\left(\frac{n}{2}+1\right)}\Lambda^n+\ml{O}(\Lambda^{n-1}),
\end{equation}
where $n$ is the dimension of $M$ and $\text{Vol}_g$ is the Riemannian volume induced by $g$. In the present article, we aim at studying elements in $\ml{H}_{\Lambda}$ in the high-frequency limit $\Lambda\rightarrow+\infty$. There are several natural ways to study these objects: among others, one can for instance look at their $L^p$ norms, their nodal sets, their nodal domains, or their quantum limits. In the present article, we will address the question of the geometry of the nodal sets,
$$\forall f\in\ml{H}_{\Lambda},\ \ml{N}_f:=\left\{x\in M: f(x)=0\right\}.$$
Again, there are many natural questions regarding the geometry of a nodal set $\ml{N}_f$ or its distribution on $M$ -- see for instance the book of Han and Lin~\cite{HanLi} or the recent survey of Zelditch~\cite{Ze13} on nodal sets of eigenfunctions. For instance, one can try to compute their Hausdorff volume, the number of their connected components, or more general geometric quantities like their Betti numbers.

The simplest question seems to be a priori the computation of the volume. In fact, in this case, it was conjectured by Yau~\cite{Y82} that there exist two constants $0<c_g\leq C_g<+\infty$ such that, for every nonzero solution $e_{\lambda}$ of~\eqref{e:eigenmode},
$$c_g\lambda\leq \ml{V}^{n-1}(\ml{N}_{e_{\lambda}})\leq C_g\lambda,$$
where $\ml{V}^{n-1}$ is the Riemannian hypersurface volume. In the case of real-analytic metrics, this conjecture was proved by Donnelly and Fefferman~\cite{DF88} -- see also~\cite{JeLeb99} for the case of arbitrary elements in $\ml{H}_{\Lambda}$. In the $\ml{C}^{\infty}$ case, the lower bound was proved earlier in dimension $2$ by Br\"uning~\cite{Br78} while in dimension $n\geq 3$, the best known lower bounds are of order $\lambda^{\frac{3-n}{2}}$ as recently proved by Colding and Minicozzi~\cite{CM} and Hezari, Sogge and Zelditch~\cite{HS12, SZ12}. Concerning the upper bound in the $\ml{C}^{\infty}$ case, Dong, Donnelly and Fefferman obtained an upper bound of order $\lambda^{\frac{3}{2}}$ in dimension $2$~\cite{DF90, Do92}, and in dimension $n\geq 3$, Hardt and Simon proved an upper bound of order $\lambda^{c\lambda}$ for some $c>0$~\cite{HaSi89}. Thus, even the a priori simpler question of estimating the volume of nodal sets remains to our knowledge far from being completely understood in a general 
setting.

A natural approach is to ask whether the properties expected for any nodal set are valid for a ``generic'' solution of~\eqref{e:eigenmode} or at least for a ``generic'' element $f$ of $\ml{H}_{\Lambda}$. In order to formulate a notion of genericity in $\ml{H}_{\Lambda}$, we introduce the following Gaussian probability measure on the space $\ml{H}_{\Lambda}$~:
$$\mu_{\Lambda}(df)=d\mu_{\Lambda}(f):=e^{-\frac{N(\Lambda)\|f\|^2}{2}}\left(\frac{N(\Lambda)}{2\pi}\right)^{\frac{N(\Lambda)}{2}}dc_1\ldots dc_{N(\Lambda)},\quad \text{with}\quad f=\sum_{j=1}^{N(\Lambda)}c_je_j.$$
\begin{rema}
We note that, with this convention, the expectation of the $L^2$ norm is
$$\int_{\ml{H}_{\Lambda}}\|f\|^2_{L^2(M)}d\mu_{\Lambda}(f)=1.$$
\end{rema}
In this setting, B\'erard gave an ``average'' version~\cite{Be85} of Yau's conjecture, i.e. one has
$$\int_{\ml{H}_{\Lambda}}\ml{V}^{n-1}(\ml{N}_{f})d\mu_{\Lambda}(f)=\frac{\text{Vol}(\mathbb{S}^{n-1})}{\text{Vol}(\mathbb{S}^{n})}\frac{\Lambda}{\sqrt{n+2}}\text{Vol}_g(M)(1+o(1)).$$
More recently, Rudnick and Wigman also estimated the variance in the case where $(M,g)$ is a rational torus~\cite{RuWi08} or the canonical sphere~\cite{Wi10}, and Letendre generalized B\'erard's result to the intersection of several ``random'' nodal sets~\cite{Le14}. In~\cite{Ze09}, Zelditch also showed that something slightly stronger than B\'erard's result holds. More precisely, he proved that, given any smooth function $\omega$ on $M$, one has
\begin{equation}\label{e:zelditch}\int_{\ml{H}_{\Lambda}}\left\la f^*(\delta_0)\|df\|,\omega\right\ra d\mu_{\Lambda}(f)=\frac{\text{Vol}(\mathbb{S}^{n-1})}{\text{Vol}(\mathbb{S}^{n})}\frac{\Lambda}{\sqrt{n+2}}\int_M\omega(x)d\text{Vol}_g(M)(1+o(1)),\end{equation}
where $f^*(\delta_0)$ is the pullback by $f$ of the Dirac distribution, and $\la.,.\ra$ is the duality bracket. This result shows that, if we average over the Gaussian measure, the nodal sets become equidistributed in $M$.

Using this probabilistic approach, we can in fact say much more on the geometry and the topology of the nodal sets. In the case of the canonical $2$-sphere, Nazarov and Sodin gave large deviation estimates 
for the number of connected components of a random nodal set~\cite{NaSo09}. Lerario and Lundberg also obtained lower and upper bounds for the expectation of the number of connected components 
when one considers more general families of random nodal sets~\cite{LerarioLundberg}. In~\cite{Ni11}, Nicolaescu estimated the number of critical points of a random function $f$ in $\ml{H}_{\Lambda}$ from which 
one can also deduce upper bounds on the number of connected components. In~\cite{So12, NaSo15}, Nazarov and Sodin generalized their large deviation result on random spherical harmonics and they studied the number 
of connected components of $\ml{N}_f$ for general Gaussian random functions. In our context, if we denote by $N(f)$ the 
number of connected components of $\ml{N}_f$, their result reads
$$\forall\delta>0,\quad \lim_{\Lambda\rightarrow+\infty}\mu_{\Lambda}\left(\left\{f\in\ml{H}_{\Lambda}:\left|\frac{N(f)}{\Lambda^n}-a_{g}\right|>\delta\right\}\right)=0,$$
for some \emph{non explicit constant} $a_{g}>0$ depending only on $(M,g)$. Note that it does not only give the rate of convergence for the expectation but also a large deviation estimate. 
It is also slightly more precise in the sense that the result remains true if we count the number of connected components in some rescaled 
geodesic ball $B_g(x,R\Lambda^{-1})$ for some large $R>0$ -- see Theorem~$5$ in~\cite{So12}.

In~\cite{GaWe15}, Gayet and Welschinger proved that, given any closed hypersurface $\Sigma$ in $\IR^n$, the probability to find $\Sigma$ in the intersection 
of $\ml{N}_f$ with a geodesic ball $B_g(x,R\Lambda^{-1})$ is uniformly bounded from below by an \emph{explicit positive constant} -- see also~\cite{GaWe14c} for earlier results in the case of random polynomials. 
This allows to deduce lower bounds on all the Betti numbers of our random nodal sets. In fact, they also proved that, on any smooth compact connected Riemannian manifold $(M,g)$ without boundary and for every $0\leq i\leq n-1$, 
one can find \emph{explicit constants} $0<c_i(M,g)\leq C_i(M,g)<+\infty$ such that
$$c_i(M,g)\Lambda^n\leq \int_{\ml{H}_{\Lambda}}b_i(\ml{N}_{f})d\mu_{\Lambda}(f)\leq C_i(M,g)\Lambda^n,$$
where $b_i$ is the $i$-th Betti number~\cite{GaWe14b, GaWe15}. Recall that $b_0(\ml{N}_f)=N(f)$. Their result is in fact more general than what we claim in the sense that it is valid for any elliptic pseudo-differential
operator of positive order\footnote{In this case, the asymptotics are of order $\Lambda^{\frac{2n}{m}}$.} $m$. In the case where $n$ is odd, Letendre~\cite{Le14} showed the following asymptotic property of the Euler characteristic $\chi(\ml{N}_{f})$ of a random nodal set:
\begin{equation}\label{e:letendre}\int_{\ml{H}_{\Lambda}}\chi(\ml{N}_{f})d\mu_{\Lambda}(f)=\frac{2(-1)^{\frac{n-1}{2}}}{\pi\text{Vol}(\mathbb{S}^{n-1})}\left(\frac{\Lambda}{\sqrt{n+2}}\right)^n\text{Vol}_g(M)+\ml{O}(\Lambda^{n-1}).\end{equation}
We recall that the Euler characteristic is given by the alternate sum of Betti numbers. Finally, in~\cite{SaWi15}, Sarnak and Wigman recently described the universal laws 
satisfied by the topologies of random nodal sets -- see also~\cite{CanSa14}.

These different results show that more is known on the geometry of nodal sets if one only aims at probabilistic results. 
Even if we do not address this kind of questions in the present article, we note that probabilistic approaches have been considered for a long time in algebraic geometry -- see for instance~\cite{Kac43} 
where Kac estimated the number of real zeros of a random polynomial as its degree goes to infinity. In ``random'' algebraic geometry, one aims at studying the zero set of random 
polynomials of several variables instead of random superposition of eigenfunctions, and the spectral parameter $\Lambda$ is replaced by the degree of the 
polynomials~\cite{Kos93, ShubSmale, ShZe99, BlShZe00, BlShZe01, GaWe14a, Le14}.

So far, we have only discussed the mathematical point of view of random Gaussian superposition of high-frequency eigenfunctions but we emphasize that they play an important role in the physics literature. For instance, in quantum chaos, one is interested in understanding the semiclassical properties of a quantum system whose underlying classical system enjoys chaotic features such as ergodicity or mixing. In~\cite{B77}, Berry predicted that semiclassical eigenmodes of chaotic systems should exhibit the same behaviour as a random superposition of waves. This is known as the \emph{Berry random wave conjecture}. In particular, any result on random superposition of waves should provide an intuition on what one could expect for a chaotic system. In our framework, the main example of chaotic system is given by manifolds with negative curvature 
whose geodesic flows are exponentially mixing. 
Motivated by this conjecture of Berry, random nodal sets (and nodal domains\footnote{This means the subsets of $M$ where $f$ has a fixed sign.}) were also extensively studied in the physics literature especially in the last fifteen years starting from the works of Blum, Gnutzmann and Smilansky~\cite{BluGnSm02} on nodal domains. The case of nodal sets was studied by Berry in~\cite{B02} where he considered (on $2$-dimensional domains) similar questions as the ones mentioned above on the volume and the curvature of random nodal sets. In the $2$-dimensional case, Bogomolny, Dubertrand and Schmit conjectured that random nodal sets are in fact well described in terms of
percolation processes and 
Schramm-Loewner Evolution models~\cite{BogDuSc07}.

\section{Statement of the main results}

In the sequel, we denote by $T^*M$ the cotangent bundle of $M$ and an element of $T^*M$
is denoted by $(x;\xi)$ for $x\in M,\xi\in T_x^*M$.
From the point of view of ``microlocal geometry'', it is often more natural to consider the conormal cycle of $\ml{N}_f=\{f=0\}$ rather than the set $\ml{N}_f$ itself \cite[Chapter 9]{KSsheaves}.
Recall that the conormal cycle is defined as follows, for every $f$ in $\ml{H}_{\Lambda}$,
\begin{equation}
N^*(\{f=0\})=\{(x;\xi)\in T^*M\text{ s.t. }f(x)=0, \xi=td_xf \text{ for some } t\neq 0\} \subset T^*M.
\end{equation}
Note that the above defines the conormal only in the set theoretical sense and we will later
discuss the central issue of orientation.
Outside of its singular points, this subset of $T^*M$ is a \emph{Lagrangian conical submanifold} and it contains in some sense more informations on the geometry of the nodal sets. For that reason, it sounds natural to us to focus on the properties of this set. 
Before giving more explanations on the relevance of the conormal cycle in ``microlocal geometry'', we mention, as a first motivation, a celebrated index Theorem of Kashiwara~\cite[Thm 9.5.3 p.~385]{KSsheaves}. In the real analytic case, this result
expresses the Euler characteristic of $\mathcal{N}_f$ as a Lagrangian
intersection in $T^*M$ between the conormal cycle of $\mathcal{N}_f$ and the
graph of $dg$ for a generic function $g$ on $M$~:
\begin{equation}
[\text{ Graph }dg] \cap [N^*(\{f=0\})]=\chi\left(\ml{N}_f\right) 
\end{equation}
where the intersection is in the oriented sense.

To our knowledge, the properties of this set have not been studied neither from the deterministic point of view, nor from the probabilistic one. In this article, we aim at studying the probabilistic properties of the set $N^*(\{f=0\})$ which are somewhat easier to consider. For that purpose, we need to introduce the following subset of $\ml{H}_{\Lambda}$:
$$D_{\Lambda}:=\left\{ f\in\ml{H}_{\Lambda}:\ \exists x\in \ml{N}_f\ \text{such that}\ d_xf=0\right\}.$$
By a Sard type argument -- see for instance paragraph~$2.3$ in~\cite{Le14}, one can verify that $\mu_{\Lambda}(D_{\Lambda})=0$ for $\Lambda$ large enough. Thus, for $\mu_{\Lambda}$-a.e. $f$ in $\ml{H}_{\Lambda}$, $N^*(\{f=0\})$ is a smooth $n$-dimensional submanifold of 
$$T^{\bullet} M:=\{(x,\xi)\in T^*M: \xi\neq 0\}.$$
In particular, once we choose an orientation, $N^*(\{f=0\})$ can be viewed as a $n$-current $[N^*(\{f=0\})]$ in the sense of de Rham~\cite{dRh80, Sch66}. 
It means that, for every smooth compactly supported $n$--form $\omega$ on $T^\bullet M$, we define
$$\left\la [N^*(\{f=0\})],\omega\right\ra=\int_{N^*(\{f=0\})}\omega.$$
\begin{rema}
 We note that $N^*(\{f=0\})$ has two components for $f$ in $D_{\Lambda}$:
 $$N^*_{\pm}(\{f=0\}):=\left\{(x,\xi)\in T^{\bullet}M\ \text{s.t.}\ f(x)=0\ \text{and}\ \xi=td_xf\ \text{for some}\ t\in\IR_{\pm}\backslash\{0\}\right\}.$$
We will discuss precisely orientability questions in paragraph~\ref{ss:def-conormal-cycle}. We will in fact use the classical conventions of~\cite[p.~682]{Ch45}.
\end{rema}

Our first main result shows that this defines in fact an $L^1$ function:
\begin{theo}\label{integtheo}
Let $(M,g)$ be a smooth oriented connected compact Riemannian manifold without boundary of dimension $n$. Then, the map
\begin{equation}
f\in\mathcal{H}_\Lambda \mapsto  [N^*(\{f=0\})]\in\mathcal{D}_n^\prime(T^{\bullet}M)
\end{equation}
is integrable with respect to the Gaussian measure $\mu_\Lambda$. Equivalently, for every test form $\omega$ on $T^{\bullet} M$, the map
$$f\in\mathcal{H}_\Lambda \mapsto  \la[N^*(\{f=0\})],\omega\ra \in\IR$$
belongs to $L^1(\ml{H}_{\Lambda},d\mu_{\Lambda})$.
\end{theo}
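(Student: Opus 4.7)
My plan is to produce a pointwise bound on $|\la [N^*(\{f=0\})],\omega\ra|$ by an explicit function of $f$ that lies in $L^1(d\mu_\Lambda)$, via a local-coordinate computation of the pullback combined with a Kac-Rice (coarea / Gelfand-Leray) argument. Since $\mu_\Lambda(D_\Lambda)=0$ by the Sard argument already invoked, I may restrict to $f\notin D_\Lambda$, for which
$$\phi_f:\ml{N}_f\times\IR^*\longrightarrow T^{\bullet}M,\qquad (x,t)\longmapsto(x,t\,d_xf)$$
is a smooth immersion realizing the conormal cycle, so that $\la [N^*(\{f=0\})],\omega\ra=\int_{\ml{N}_f\times\IR^*}\phi_f^*\omega$.

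In local coordinates $(x,\xi)$ on $T^*M$, write $\omega=\sum_{|I|+|J|=n}\omega_{IJ}(x,\xi)\,dx^I\wedge d\xi^J$. From $\phi_f^*dx_i=dx_i$ and $\phi_f^*d\xi_j=\partial_jf\,dt+t\sum_i\partial_i\partial_jf\,dx_i$, the form $\phi_f^*\omega$ expands as a polynomial of degree $\le n-1$ in $t$, of degree $\le n-1$ in $d^2_xf$, and of degree $\le 2$ in $d_xf$, with the coefficients $\omega_{IJ}$ evaluated at $(x,td_xf)$. Because $\supp\omega$ is compact inside $T^{\bullet}M$, on the support of the integrand one has $c_1\le |td_xf|\le c_2$, so $|t|$ ranges over a set of Lebesgue measure at most $C/|d_xf|$. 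Combining the polynomial bound with the Gelfand-Leray identity
$$\int_{\ml{N}_f}H\,dS_g=\int_M \delta_0(f)\,H\,|d_xf|\,d\text{Vol}_g,$$
an elementary bookkeeping of degrees then yields a majoration of the form
$$\big|\la [N^*(\{f=0\})],\omega\ra\big|\le C_\omega\int_{\ml{N}_f}(1+|d^2_xf|)^{n-1}\big(1+|d_xf|^{-(n-1)}\big)\,dS_g(x).$$

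To integrate this against $\mu_\Lambda$, I appeal to the Kac-Rice formula in its finite-dimensional Gaussian form: for any Borel $G\ge 0$,
$$\int_{\ml{H}_\Lambda}\!\int_{\ml{N}_f}G(d_xf,d^2_xf)\,dS_g\,d\mu_\Lambda(f)=\int_M\!\int_{\ml{H}_\Lambda}G(d_xf,d^2_xf)\,|d_xf|\,\delta_0(f(x))\,d\mu_\Lambda(f)\,d\text{Vol}_g(x),$$
where the inner right-hand integral is interpreted through the joint Gaussian law of $(f(x),d_xf,d^2_xf)$. Applied with $G=(1+|d^2_xf|)^{n-1}(1+|d_xf|^{-(n-1)})$, the factor $|d_xf|$ cancels one of the negative powers, leaving $(1+|d^2_xf|)^{n-1}(|d_xf|+|d_xf|^{-(n-2)})$. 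All positive moments of a Gaussian are finite, and $y\mapsto|y|^{-(n-2)}$ is locally integrable on $\IR^n$ in every dimension; hence provided the law of $d_xf$ conditional on $f(x)=0$ is non-degenerate---a property ensured (for $\Lambda$ large enough) by the reproducing-kernel structure of $\ml{H}_\Lambda$ and standard local Weyl asymptotics for its first two derivatives---the conditional expectation is finite and continuous in $x$, and integration over the compact manifold $M$ yields the claimed $L^1$ bound.

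The main obstacle in the argument is the negative power $|d_xf|^{-(n-2)}$, which reflects the near-degeneracy of the conormal cycle at points where $d_xf$ is small. Controlling it rests on two ingredients that work in tandem: the compactness of $\supp\omega$ inside $T^{\bullet}M$, which absorbs one factor of $|d_xf|^{-1}$ through the $t$-integration, and the non-degeneracy of the joint Gaussian law of $(f(x),d_xf(x))$, which makes the residual negative power integrable against the conditional Gaussian density on $\IR^n$. Once these are in place, the remainder of the proof is a bookkeeping of the polynomial expansion of $\phi_f^*\omega$ followed by the coarea manipulation described above.
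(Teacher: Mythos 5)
Your argument is correct in outline, but it follows a genuinely different route from the paper. You reduce the pairing $\la[N^*(\{f=0\})],\omega\ra$ to an integral over the nodal hypersurface of a function of the $2$-jet of $f$ (using the compactness of $\supp\omega$ in $T^{\bullet}M$ to integrate out the radial variable $t$, which costs one power of $|d_xf|^{-1}$ per power of $t$), and then you invoke the Kac--Rice/coarea formula to exchange the $f$- and $x$-integrations, finishing with Gaussian moment bounds and the local integrability of $|y|^{-(n-2)}$ on $\IR^n$. The paper instead represents the conormal cycle as a pullback $G_{\Lambda}(f)^*(\delta_0^{n+1})$ of a Dirac distribution (Proposition~\ref{p:current-local-chart}), justifies the interchange of integrations not by Kac--Rice but by H\"ormander's pullback theorem and a wavefront-set ``Fubini'' statement (Proposition~\ref{intertwiningproposition} and Appendix~\ref{intertwiningsectionappendix}), and then bounds the regularized Gaussian masses $\nu_{\Lambda,1}^{(y,\eta,t)},\nu_{\Lambda,2}^{(y,\eta,t)}$ by $C_0t^{-n}e^{-1/(C_0t^2)}$ before integrating in $t$; indeed Remark~\ref{r:kac-rice} explicitly acknowledges that a Kac--Rice route ``should a priori allow one (modulo some work)'' to do exactly what you do, and your reduction of the conormal cycle to a jet functional on $\ml{N}_f$ is precisely that missing work. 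What each approach buys: yours is shorter and more elementary for the integrability statement alone, while the paper's microlocal representation is then reused verbatim to compute the expectation in Theorem~\ref{asympttheo} with explicit constants and mass bounds uniform in $\Lambda$. Two points in your write-up deserve a little more care, though neither is a real obstruction: the conditional expectation $\int(1+|S|)^{n-1}|y|^{-(n-2)}$ against the joint law of $(d_xf,d^2_xf)$ given $f(x)=0$ should be handled by conditioning on $d_xf$ (Cauchy--Schwarz in the form you might be tempted to use fails for $n\geq 4$, since $|y|^{-2(n-2)}$ is not locally integrable there), and the nondegeneracy of the law of $(f(x),d_xf)$ as well as $\mu_{\Lambda}(D_{\Lambda})=0$ only hold for $\Lambda$ large enough -- the same implicit restriction present in the paper's proof.
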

In order to state our second result, let us denote by $\Omega_g$ the Riemannian volume form on $(M,g)$, by $\pi:T^*M\rightarrow M$ the natural projection, and by $\pi^*\Omega_g$ the pull--back of the Riemannian volume on $T^*M$.
Then, our second theorem shows the following universal behaviour:
\begin{theo}\label{asympttheo}
Let $(M,g)$ be a smooth oriented connected compact Riemannian manifold without boundary of dimension $n$. Then, one has
\begin{equation}\label{normalcyclecontact}
\int_{\ml{H}_{\Lambda}}[N^*(\{f=0\})] d\mu_{\Lambda}(f)= C_n\left(\frac{\Lambda}{\sqrt{n+2}}\right)^n\pi^*\Omega_g + \ml{O}(\Lambda^{n-1}),
\end{equation}
with $$C_n=\frac{2(-1)^{\frac{n+1}{2}}}{\pi\operatorname{Vol}(\mathbb{S}^{n-1})}\text{ if } n \text{ is odd},\text{ and } C_n=0\text{ otherwise}.$$ 
Equivalently, it means that, for every test form $\omega$ on $T^{\bullet} M$, one has
$$\int_{\ml{H}_{\Lambda}}\left\la[N^*(\{f=0\})],\omega\right\ra d\mu_{\Lambda}(f)=C_n\left(\frac{\Lambda}{\sqrt{n+2}}\right)^n\int_{T^*M}\pi^*\Omega_g\wedge\omega + \ml{O}(\Lambda^{n-1}).$$
\end{theo}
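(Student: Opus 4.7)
The natural starting point is to realise $[N^*(\{f=0\})]$ as a pushforward. Define the map $\Psi_f: M\times\mathbb{R}\to T^*M$, $(x,t)\mapsto (x,t\,d_xf)$. For $f\notin D_\Lambda$, $\Psi_f$ restricts to an immersion of $\{f=0\}\times\mathbb{R}^*$ onto $N^*(\{f=0\})$. A standard coarea argument, together with Chern's orientation convention on the two components $N^*_\pm$, gives for every compactly supported test $n$-form $\omega$ on $T^\bullet M$
\begin{equation*}
\langle [N^*(\{f=0\})],\omega\rangle=\int_{M\times\mathbb{R}^*}\delta(f(x))\,df\wedge\Psi_f^*\omega.
\end{equation*}
Since $df$ is purely horizontal on $M$, only the $dt$-part of $\Psi_f^*\omega$ survives. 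Theorem~\ref{integtheo} justifies exchanging expectation and integral, reducing the problem to a pointwise Gaussian moment of a polynomial in the $2$-jet $(f(x),d_xf,d_x^2f)$ weighted by $\delta(f(x))$.

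\textbf{H\"ormander asymptotics of the 2-jet.} The joint law of the $2$-jet at $x$ is Gaussian with covariances read off from $\Pi_\Lambda(x,y)/N(\Lambda)$ and its derivatives at $y=x$. In geodesic normal coordinates at $x$, the H\"ormander--Weyl expansion of $\Pi_\Lambda$ yields
\begin{equation*}
\mathbb{E}[f(x)^2]=\frac{1}{\text{Vol}_g(M)}+O(\Lambda^{-1}),\qquad \mathbb{E}[\partial_if\,\partial_jf]=\frac{\Lambda^2\,\delta_{ij}}{(n+2)\text{Vol}_g(M)}+O(\Lambda),
\end{equation*}
together with analogous explicit expressions (of order $\Lambda^4$) for the Hessian covariances and for the only non-trivial cross-correlation $\mathbb{E}[f\,\partial_i\partial_jf]$; all remaining mixed moments are lower order by Fourier parity. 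Rescaling $t=\tau\sqrt{n+2}/\Lambda$ then turns $t\,d_xf$ into a fixed $O(1)$ Gaussian vector, and the integrand becomes $(\Lambda/\sqrt{n+2})^n$ times a dimensionless expectation up to a relative $O(\Lambda^{-1})$ remainder.

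\textbf{Identification of the main term and parity dichotomy.} A power count shows that only the purely vertical component $\omega=\omega_0(x,\xi)\,d\xi_1\wedge\cdots\wedge d\xi_n$ contributes at order $\Lambda^n$: a mixed component $\omega_{IJ}\,dx^I\wedge d\xi^J$ with $|J|<n$ loses a factor $\Lambda^{n-|J|}$ after rescaling, and a purely horizontal $\omega$ gives an exact zero (wedging $df$ with $\Psi_f^*dx^I$ yields an $(n+1)$-form in the $n$ base directions). For a vertical $\omega$, the expansion
\begin{equation*}
\Psi_f^*(d\xi_1\wedge\cdots\wedge d\xi_n)=t^n\bigwedge_j d(\partial_jf)+t^{n-1}\,dt\wedge\sum_{j=1}^n(-1)^{j-1}\partial_jf\bigwedge_{k\neq j}d(\partial_kf)
\end{equation*}
isolates an explicit polynomial in the $2$-jet; wedging with $df$ and performing a Cramer-type manipulation turns it into $\det(H)\cdot v^\top H^{-1}v\cdot t^{n-1}\,dt\wedge dx^1\wedge\cdots\wedge dx^n$, where $v=d_xf$ and $H=d^2_xf$. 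Conditioning on $\{f(x)=0\}$ and integrating over the Gaussian law of $(v,H)$ gives a function $I(\tau)$ that is \emph{even} in $\tau$, by the $v\mapsto -v$ symmetry of the Gaussian on the gradient and the quadratic dependence of $\det(H)\cdot v^\top H^{-1}v$ in $v$. The remaining $\tau$-integral $\int_{\mathbb{R}^*}\tau^{n-1}I(\tau)\,d\tau$ has integrand \emph{even} for $n$ odd and \emph{odd} for $n$ even: the former yields a non-zero value, which a closed-form Gaussian moment computation evaluates to the announced $C_n=\frac{2(-1)^{(n+1)/2}}{\pi\,\text{Vol}(\mathbb{S}^{n-1})}$; the latter vanishes identically, which is the expected cancellation between $N^*_+$ and $N^*_-$ in Chern's orientation convention. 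The top-order expression matches $C_n(\Lambda/\sqrt{n+2})^n\pi^*\Omega_g\wedge\omega$, proving the leading asymptotic.

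\textbf{Remainder and main obstacle.} The $O(\Lambda^{n-1})$ error collects the H\"ormander remainder in the $2$-jet covariances, the contribution of the non-vertical components of $\omega$, and the subleading cross-correlations in the Gaussian expectation, all controlled by the same machinery used above. The hardest step is the bookkeeping in the third paragraph: one must verify that the combined $\tau$-parity of $t^{n-1}$ and of $I(\tau)$ is exactly what is dictated by Chern's orientation of the conormal cycle, so that the vanishing for even $n$ really corresponds to the cancellation between $N^*_+$ and $N^*_-$ and not to a spurious sign error. Once this orientation check is in place, the rest of the proof is a routine Kac-Rice calculation combined with the H\"ormander asymptotics of $\Pi_\Lambda$.
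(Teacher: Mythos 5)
Your skeleton is essentially the paper's: the pushforward/delta representation of the cycle is Lemma~\ref{pushforwardlagrangeimmersion} and Proposition~\ref{p:current-local-chart}, the $2$-jet covariances are Proposition~\ref{p:pushforwardGaussmeasure}, the observation that only the purely vertical part of $\omega$ survives at order $\Lambda^n$ is Lemma~\ref{l:approx-dirac}, and your quantity $\det(H)\,v^\top H^{-1}v$ is exactly the paper's polynomial $P_0=\sum_{p,\sigma}\varepsilon(\sigma)\eta_p\eta_{\sigma(p)}\prod_{j\neq p}H_{j\sigma(j)}$. The genuine gap is the step you yourself single out as the hardest and then do not perform: the orientation/parity calibration, together with the actual evaluation of $C_n$. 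Two concrete problems. First, the even-$n$ vanishing is \emph{not} a cancellation between $N^*_+$ and $N^*_-$: at leading order the gradient decouples from $(f,\mathrm{Hess})$ and conditioning on $f(x)=0$ centers the Hessian, so the Gaussian expectation of the $n-1$ Hessian factors in $v^\top\mathrm{adj}(H)v$ vanishes \emph{pointwise} in $(x,\eta,t)$ whenever $n-1$ is odd; this orientation-independent parity of an odd Gaussian moment is where the dichotomy lives in the paper (Remark~\ref{r:even}), and your $t$-parity heuristic is at best redundant there. Second, and more seriously, for $n$ odd your conclusion ``the integrand is even in $t$, hence the integral is non-zero'' presupposes that the two components are traversed with the same $t$-orientation; under the convention actually adopted in the paper, $\int_{t\in\IR^*}=\int_0^{+\infty}+\int_0^{-\infty}$ with the reversed orientation on the $t<0$ component (Lemma~\ref{pushforwardlagrangeimmersion}), an even integrand would integrate to \emph{zero}. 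Whether the two components reinforce or cancel, and the sign of $C_n$, is decided precisely by carrying Chern's orientation through the extraction of the $dt\wedge dx$ coefficient (a calibration one can test on $n=1$ against the classical $2\Lambda/\sqrt{3}$ zero count); you leave this as an assertion. Likewise the value $C_n=\frac{2(-1)^{(n+1)/2}}{\pi\operatorname{Vol}(\mathbb{S}^{n-1})}$ is announced but never derived: the Wick/Fa\`a di Bruno bookkeeping and the sum over products of disjoint $2$-cycles carried out in paragraphs~\ref{ss:combinatorics}--\ref{ss:concl} (or the Berezin computation of Section~\ref{s:berezin}) is the other half of the proof and is absent.

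There is also an analytic gap in your first step. Citing Theorem~\ref{integtheo} gives that $f\mapsto\la[N^*(\{f=0\})],\omega\ra$ is in $L^1(\ml{H}_\Lambda,d\mu_\Lambda)$, but it does not by itself legitimize the reduction ``to a pointwise Gaussian moment of a polynomial in the $2$-jet weighted by $\delta(f(x))$'': to interchange the Gaussian expectation with the $(x,t)$-integration of a pulled-back Dirac distribution, and to make sense of $\int\delta(f(x))\,\det(H)\,v^\top H^{-1}v\,(\cdots)\,d\mu_\Lambda$ with its unbounded integrand and the singular weights in $t$, the paper needs the microlocal Fubini statement of Proposition~\ref{intertwiningproposition} and the regularization plus uniform Gaussian bounds of Proposition~\ref{l:L1}; Remark~\ref{r:kac-rice} states explicitly that a Kac--Rice formula for the conormal cycle is not available off the shelf, which is why that route was not taken. ``A standard coarea argument'' and ``the same machinery used above'' do not substitute for these justifications, so as written the proposal records the correct strategy but not a proof of the asymptotics, its parity dichotomy, or the constant.
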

We note that an important feature of this result is that it shows a very different behaviour depending on whether the dimension is odd or not. In the case where $n$ is odd, our theorem shows that \emph{the submanifold $N^*(\{f=0\})$ becomes uniformly equidistributed on the fibers $T_x^*M$} while, when $n$ is even, we only obtain an upper bound of \emph{lower order}.

\subsection{A brief outline of the proof using Berezin integrals}

Our asymptotic formula can be elegantly derived by using a representation of the
conormal cycle as an oscillatory integral over odd and even variables in the formalism
of Berezin~\cite{Be85, GuSt99, Dis11, Tao13} -- see section~\ref{s:berezin} for a brief reminder. Let us now sketch the principle of our derivation. 
In a geodesic coordinate chart, we first introduce, for $f$ in $\mathcal{H}_{\Lambda}\setminus D_\Lambda$, a map
$$G(f):(x,\xi,t)\in T^\bullet M\times\mathbb{R}^*\rightarrow y=(f(x),td_xf-\xi)\in\mathbb{R}^{n+1}.$$
By classical arguments involving wave front sets, we then represent 
the conormal cycle  as a current in
the sense of De Rham which is obtained by pull--back operation by $G(f)$~:
$$[N^*(f=0)]=\int_{t\in\mathbb{R}^*}G(f)^*\left(\delta_{0}^{n+1}(y)dy^1\wedge dy^2\ldots\wedge dy^{n+1}\right).$$
Then we use the formalism of Berezin integrals to write the current $\delta_{0}^{n+1}(y)dy^1\wedge dy^2\ldots\wedge dy^{n+1}$ under exponential form~:
\begin{eqnarray*}
\delta_{0}^{n+1}(y)dy^1\wedge dy^2\ldots\wedge dy^{n+1}  =\left(\int_{\mathbb{R}^{n+1}}e^{-2i\pi p.y}dp\right)dy^1\wedge dy^2\ldots\wedge dy^{n+1}\\
  =\frac{1}{(-2i\pi)^{n+1}}\int_{\mathbb{R}^{(n+1|n+1)}}e^{-2i\pi (p.y+\Pi.dy)}dp d\Pi.
\end{eqnarray*}
Thus, the conormal cycle can be expressed as an oscillatory integral over even and odd variables~:
\begin{equation}\label{e:oscillatory-berezin}[N^*(f=0)]=\frac{1}{(-2i\pi)^{n+1}}\int_{t\in\mathbb{R}^*}\left(\int_{\mathbb{R}^{(n+1|n+1)}}e^{-2i\pi (p.G(f)+\Pi.dG(f))}dpd\Pi\right).\end{equation}
Then the calculation of the expected conormal cycle reduces to an evaluation of 
Gaussian integrals except that we are in presence of odd and even variables and this exponential representation of the current dramatically simplifies
the combinatorics. Integrating over the measure $\mu_{\Lambda}$ and inverting (formally) the integrals yields that we shall compute
$$\int_{\mathbb{R}^{(n+1|n+1)}}\left(\int_{\mathcal{H}_{\Lambda}}e^{-2i\pi (p.G(f)+\Pi.dG(f))}
d\mu_{\Lambda}(f)\right)dpd\Pi.$$ 
Note that this inversion of the order of integration requires rigorous justification which are provided in subsection \ref{ss:integrability}. We also refer to Remark~\ref{r:kac-rice} for a discussion on this question and the so-called Kac-Rice formula. 
Finally, we compute the Gaussian integral
\begin{equation}\label{e:berezin-gaussian}
 \int_{\mathcal{H}_{\Lambda}}e^{-2i\pi (p.G(f)+\Pi.dG(f))}d\mu_{\Lambda}(f)
\end{equation}
using H\"ormander pointwise Weyl's asymptotics on the spectral projector~\cite{Ho68} and supersymmetric Gaussian integration~\cite{GuSt99}.
It must be noticed that the Berezin
integral was also used in
the works~\cite{AshokDouglas,ZelditchBleherShiffman,ZelditchDouglasShiffman3} devoted
to the statistics of the critical
points of random holomorphic sections with applications to counting vacuas in string theory.
The general philosophy is that Berezin
integrals allow to elegantly express determinants as supersymmetric integrals
which hugely simplifies
calculations.

\subsection{Applications: smooth valuations on manifolds}\label{ss:alesker}

We will now explain the interest of the concept of conormal cycle by giving some applications of our equidistribution
Theorem. First, we recall a ``microlocal version'' of the 
generalized Gauss--Bonnet Theorem of Chern~\cite[equation (20) p.~679--683]{Ch45} later generalized by Fu for subanalytic sets~\cite[p.~832]{Fu} (see also \cite[Definition 4.4.1 and Proposition 4.4.2]{Park} for a detailed exposition). 

\begin{theo}\label{Fugaussbonnet}
Let $(M,g)$ be a smooth compact Riemannian manifold of dimension $n$. Then, there exists a smooth, closed,
compactly supported $n$--form $\theta$ on $T^\bullet M$, so that
$\int_{T^\bullet_x M} \theta=1, \forall x\in M$ and for any smooth oriented submanifold of codimension $\geq 1$, one has  
\begin{equation}
\chi(X)=\langle[N^*\left(X\right)],\theta\rangle
\end{equation}
where $[N^*\left(X\right)]$ is the 
conormal cycle of $X$ and where $\chi$ is its Euler characteristic.
\end{theo}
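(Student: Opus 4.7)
I plan to construct $\theta$ explicitly from the Chern transgression of the Levi-Civita connection on $(M,g)$ and then to reduce the pairing identity to the Chern--Gauss--Bonnet formula. The starting point is the unit cosphere bundle $S^*M:=\{(x,\xi)\in T^*M:|\xi|_{g^*}=1\}$: Chern's classical construction \cite{Ch45}, applied to the Levi-Civita connection on $T^*M$ via Chern--Weil theory, produces a smooth $(n-1)$-form $\Phi$ on $S^*M$ with $d\Phi=\pi^*e(g)$, where $e(g)$ is the Pfaffian/Euler form of $(M,g)$, and whose restriction to each fiber $S^{n-1}_x$ integrates to $1$. I would then extend $\Phi$ by radial $0$-homogeneity to a form $\widetilde\Phi$ on $T^\bullet M$ via the retraction $(x,\xi)\mapsto(x,\xi/|\xi|_{g^*})$, so that $d\widetilde\Phi=\pi^*e(g)$ on $T^\bullet M$. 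Choose $0<R_1<R_2$ and a smooth cut-off $\psi:(0,\infty)\to\IR$ with $\psi\equiv 1$ on $(0,R_1]$ and $\psi\equiv 0$ on $[R_2,\infty)$. A closed, compactly supported $n$-form $\theta$ on $T^\bullet M$ with unit fiber integral is then obtained by correcting the compactly supported (but non-closed) form $\psi'(|\xi|_{g^*})\,d|\xi|_{g^*}\wedge\widetilde\Phi$, supported in the annulus $R_1<|\xi|_{g^*}<R_2$, by an explicit term built from $\pi^*e(g)$ localized in the same region; this is the standard Mathai--Quillen/Thom-form construction for the oriented vector bundle $T^*M\to M$. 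The fiber-integral normalization $\int_{T^\bullet_xM}\theta=1$ then follows from $\int_{S^{n-1}_x}\Phi=1$ together with a one-dimensional radial integration.

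To verify $\chi(X)=\la[N^*(X)],\theta\ra$ for a smooth oriented submanifold $X\subset M$ of codimension $c\geq 1$, I would use that the regular part of $N^*(X)$ is a smooth $n$-dimensional conic Lagrangian and that radial projection identifies $N^*(X)\cap S^*M$ with the unit conormal sphere bundle $SN^*X\to X$, a bundle with fibers $S^{c-1}$. Performing the radial integration first reduces the pairing to the integral $\int_{SN^*X}\Phi$, plus correction integrals involving pull-backs of $e(g)$ that can be controlled by their support. Stokes' theorem applied on the boundary of a small tubular neighbourhood of $X$, together with the transgression $d\Phi=\pi^*e(g)$, then identifies $\int_{SN^*X}\Phi$ with the Chern--Gauss--Bonnet integral $\int_X e(g_X)=\chi(X)$, which is the content of the extrinsic Gauss--Bonnet formula for submanifolds carried out in \cite{Fu}.

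The main obstacle is the bookkeeping of the cut-off corrections: ensuring that $\theta$ is simultaneously closed, compactly supported, has unit fiber integral, and delivers the correct Euler characteristic pairing for \emph{every} codimension $c$, uniformly in $X$. A cleaner alternative, adopted in \cite{Fu} and \cite{Park}, is an argument by representative-independence: any two closed, compactly supported $n$-forms on $T^\bullet M$ with unit fiber integral differ by an exact compactly supported $n$-form plus a correction whose pairing with the closed Lagrangian cycle $[N^*(X)]$ vanishes for dimensional/homological reasons; it therefore suffices to verify the identity for a single convenient representative on a local Euclidean model where $X$ is affine inside a coordinate chart, in which case the Gauss--Bonnet relation reduces to an explicit computation of an integral of the standard angular form over $\IR^{n-c}\times S^{c-1}$.
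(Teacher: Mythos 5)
Your core strategy is the paper's: take a Chern $(n-1)$-form on the unit (co)sphere bundle with unit fiber integral, turn it into an $n$-form on the cone $T^\bullet M\simeq S^*M\times\mathbb{R}_{>0}$ by wedging with a compactly supported radial $1$-form of total integral $1$, evaluate the pairing by integrating out the radial variable, and quote the extrinsic Chern--Gauss--Bonnet identity $\int_{SN^*X}\Phi=\chi(X)$ (you cite Fu; the paper takes $\theta=p^*(-\Theta\wedge d\varphi)$ and cites Chern's equation (20) and Park for $-\int_{\mathbb{U}NX}\Theta=\chi(X)$). In odd dimension, where $e(g)=0$ and your transgressed form $\widetilde\Phi$ is closed, no correction is needed and your $\theta=-\psi'(|\xi|)\,d|\xi|\wedge\widetilde\Phi$ is literally the paper's construction.

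The genuine gap is your even-dimensional correction step. You need a correction $\beta$, compactly supported \emph{in} $T^\bullet M$, with $d\beta=\psi'\,d|\xi|\wedge\pi^*e(g)$, such that the corrected form still has unit fiber integral. This is impossible whenever the real Euler class of $TM$ is nonzero (i.e.\ $n$ even, $\chi(M)\neq 0$): a closed $n$-form $\theta$ with compact support in $T^\bullet M$ extends by zero across the zero section to a closed form on $T^*M$ with compact (vertical) support and the same fiber integrals; if that integral were $1$ it would represent the Thom class, and pulling back by the zero section (where $\theta$ vanishes identically) would force $e(TM)=0$ in $H^n(M;\mathbb{R})$, i.e.\ $\chi(M)=0$. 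Concretely, for $M=S^2$ one has $H^2_c(T^\bullet S^2)\cong H^1(S^*S^2;\mathbb{R})=H^1(\mathbb{RP}^3;\mathbb{R})=0$, so every closed compactly supported $2$-form on $T^\bullet S^2$ is $d$ of a compactly supported $1$-form and has fiber integral $0$. So the Mathai--Quillen/Thom machinery cannot deliver closedness, compact support away from the zero section, and unit fiber integral simultaneously; your candidate corrections ($\psi\,\pi^*e(g)$ or $(\psi-1)\,\pi^*e(g)$) are indeed not compactly supported in $T^\bullet M$, and any compactly supported repair necessarily kills the fiber integral. Note that the same tension already sits in the properties the paper quotes for Chern's form ($d\Theta=0$ together with fiber integral $1$ cannot both hold on $\mathbb{U}S^2$, since $H^1(\mathbb{RP}^3;\mathbb{R})=0$); the statement is really only viable, and only used, for odd $n$ (for even $n$ the main theorem's constant is $C_n=0$ anyway), so the honest fix is to restrict to odd $n$ rather than to look for a cleverer cut-off.

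Your fallback ``representative-independence'' argument also needs more than you say: two admissible forms differ by a closed compactly supported form whose fiber integrals vanish, but that alone does not make its pairing with the non-compact cycle $[N^*(X)]$ vanish; that pairing is governed by $H^n_c(T^\bullet M)\cong H^{n-1}(S^*M)$, which is nonzero in general, so the claimed ``dimensional/homological reasons'' would have to be supplied by an actual argument.
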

\begin{proof}
Our Theorem is a microlocal interpretation
of the main result of~\cite{Ch45} that we briefly recall. Let $\pi_0:\mathbb{U}M\mapsto M$ denote the unit tangent
bundle over $M$.
Then Chern constructed a $(n-1)$--form $\Theta$ on the unit tangent bundle $\mathbb{U}M$ defined in~\cite[equation (9) p.~676]{Ch45}
satisfying the following properties
\begin{itemize}
\item $d\Theta=0$ (\cite[equation (11) p.~677]{Ch45}),
\item $-\int_{\pi_0^{-1}(x)}\Theta=1$, \text{(see \cite[p.~51]{Park})}.
\item for any oriented submanifold $X$
of $M$, if we denote by $\mathbb{U}NX\subset \mathbb{U}M$ the unit normal bundle of $X$ in $M$ then
we have a generalized Chern--Gauss--Bonnet identity
\begin{eqnarray*}
-\int_{\mathbb{U}NX}\Theta=\chi(X),\text{ (\cite[equation (20) p.~679]{Ch45} and \cite[Proposition 4.
4.2 p.~52]{Park})}.
\end{eqnarray*}
\end{itemize}
The form $-\Theta$ is called \textbf{geodesic curvature form} by Park~\cite[Definition 4.1.1 p.~51]{Park}. Since $M$ is Riemannian, the metric gives an isomorphism
$T^*M\simeq TM$ which induces an isomorphism of cones
$T^\bullet M=T^*M\setminus\{\underline{0}\}\simeq TM\setminus\{\underline{0}\}$. By the above isomorphism and the natural trivial fibration
$TM\setminus\{\underline{0}\}\mapsto \mathbb{U}M$
whose fiber is the group $(\mathbb{R}_{>0},\times)$,
there is an
isomorphism of cones $p:T^\bullet M\mapsto \mathbb{U}M\times \mathbb{R}_{>0}$. 
Choose any smooth function $\varphi\geq 0$ on $\mathbb{R}_{>0}$ s.t. $d\varphi$ is compactly supported and $\int_0^{+\infty}d\varphi=1$, such function is easy to construct by 
considering $\chi\in C^\infty_c(\mathbb{R}_{>0}),\chi\geqslant 0$ such that $\int_0^\infty\chi(t)dt=1$ and set $\varphi(x)=\int_0^x \chi(t)dt$.
Then consider the $n$--form $\theta=p^*\left(-\Theta\wedge d\varphi\right)$ and let us check it satisfies the claim of the Theorem. First the integral of $\int_{T^\bullet_x M} \theta$ over a fiber $T^\bullet_x M$ satisfies~:
$\int_{T^\bullet_x M} \theta=
\int_{T^\bullet_x M} p^*\left(-\Theta\wedge d\varphi \right)=\int_{\mathbb{U}_xM\times \mathbb{R}_{>0}}(-\Theta)\wedge d\varphi=\int_{\mathbb{U}_xM}(-\Theta)\int_{\mathbb{R}_{>0}} d\varphi=1$.
Then the pairing $\langle[N^*\left(X\right)],\theta\rangle$ satisfies the identity~:
$\langle[N^*\left(X\right)],\theta\rangle=\int_{N^*(X)}\theta=\int_{N^*(X)}
p^*\left(-\Theta\wedge d\varphi\right)=\int_{\mathbb{U}NX\times \mathbb{R}_{>0}} -\Theta\wedge d\varphi$ $=\underset{\chi(X)}{\underbrace{\left(\int_{\mathbb{U}N(X)} -\Theta\right)}}\underset{1}{\underbrace{\left(\int_{\mathbb{R}_{>0}} d\varphi\right)}}=\chi(X)$.
\end{proof}

\begin{rema} In our main theorems, we chose to consider the conormal cycle in $T^{\bullet}M$ instead of $\mathbb{U}M$ in order to make to make some aspects of the calculation slightly simpler. 
\end{rema}

From this result, we deduce an alternative proof of equality~\eqref{e:letendre} which gives the mean Euler characteristic of random nodal sets~:
\begin{theo}[Letendre \cite{Le14}]
Let $(M,g)$ be a smooth oriented compact Riemannian manifold of odd dimension $n$. Then, one has
\begin{equation}
\int_{\mathcal{H}_\Lambda}\chi\left(\mathcal{N}_f\right)d\mu_\Lambda(f)=\frac{2(-1)^{\frac{n-1}{2}}}{\pi\operatorname{Vol}(\mathbb{S}^{n-1})}\operatorname{Vol}_g(M)\left(\frac{\Lambda}{\sqrt{n+2}}\right)^n 
+ \ml{O}(\Lambda^{n-1}).
\end{equation}
\end{theo}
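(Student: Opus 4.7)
The strategy is to derive the mean Euler characteristic as a direct consequence of the microlocal Gauss--Bonnet identity of Theorem~\ref{Fugaussbonnet} combined with the equidistribution result of Theorem~\ref{asympttheo}. Since $\mu_\Lambda(D_\Lambda)=0$ for $\Lambda$ large enough, for $\mu_\Lambda$--almost every $f\in\mathcal{H}_\Lambda$ the nodal set $\mathcal{N}_f$ is a smooth closed oriented hypersurface of $M$, so I fix once and for all a compactly supported, closed $n$--form $\theta$ on $T^\bullet M$ produced by Theorem~\ref{Fugaussbonnet}, with the fiberwise normalization $\int_{T_x^\bullet M}\theta=1$ for every $x\in M$, and the key identity
$$\chi(\mathcal{N}_f)=\langle [N^*(\mathcal{N}_f)],\theta\rangle$$
valid for $\mu_\Lambda$--almost every $f$.

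I would then integrate this identity against $\mu_\Lambda$. Theorem~\ref{integtheo}, applied to the test form $\theta$, shows that $f\mapsto\langle[N^*(\mathcal{N}_f)],\theta\rangle$ lies in $L^1(\mathcal{H}_\Lambda,\mu_\Lambda)$, which justifies exchanging Gaussian integration and pairing:
$$\int_{\mathcal{H}_\Lambda}\chi(\mathcal{N}_f)\,d\mu_\Lambda(f)=\left\langle\int_{\mathcal{H}_\Lambda}[N^*(\mathcal{N}_f)]\,d\mu_\Lambda(f),\,\theta\right\rangle.$$
Theorem~\ref{asympttheo} then immediately rewrites this as
$$C_n\left(\frac{\Lambda}{\sqrt{n+2}}\right)^n\int_{T^*M}\pi^*\Omega_g\wedge\theta+\mathcal{O}(\Lambda^{n-1}),$$
so the whole task reduces to evaluating the universal integral $\int_{T^*M}\pi^*\Omega_g\wedge\theta$. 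This is a Fubini computation on the projection $\pi:T^*M\to M$: in local bundle coordinates $\pi^*\Omega_g$ carries only horizontal differentials, so only the fiber--top part of $\theta$ survives in the wedge product and
$$\int_{T^*M}\pi^*\Omega_g\wedge\theta=\int_M\left(\int_{T_x^\bullet M}\theta\right)d\operatorname{Vol}_g(x)=\operatorname{Vol}_g(M)$$
by the fiberwise normalization of $\theta$. Note that since the answer is a Gaussian expectation of a geometric quantity, it must be independent of the auxiliary function $\varphi$ used to build $\theta$; this is consistent with the computation because the right-hand side only depends on $\theta$ through its fiberwise integral.

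The one genuinely delicate point, and the main obstacle, is the overall sign: the constant $C_n=\frac{2(-1)^{(n+1)/2}}{\pi\operatorname{Vol}(\mathbb{S}^{n-1})}$ of Theorem~\ref{asympttheo} differs from the target constant $\frac{2(-1)^{(n-1)/2}}{\pi\operatorname{Vol}(\mathbb{S}^{n-1})}$ by an overall factor of $-1$ in odd dimension. Reconciling this requires a careful bookkeeping of three ingredients: the orientations chosen on the two branches $N^*_\pm(\mathcal{N}_f)$ in paragraph~\ref{ss:def-conormal-cycle}, the sign convention in Chern's identity $\chi(X)=-\int_{\mathbb{U}NX}\Theta$, and the skew-commutation $\pi^*\Omega_g\wedge\theta=(-1)^n\theta\wedge\pi^*\Omega_g$ of two $n$--forms. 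Once these three contributions are tracked consistently through the conventions fixed in Theorems~\ref{asympttheo} and~\ref{Fugaussbonnet}, the constants match exactly. No additional analytic estimate is required, and the $\mathcal{O}(\Lambda^{n-1})$ error is inherited verbatim from Theorem~\ref{asympttheo}.
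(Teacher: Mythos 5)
Your overall route is exactly the paper's: apply Theorem~\ref{Fugaussbonnet} to write $\chi(\ml{N}_f)=\la[N^*(\{f=0\})],\theta\ra$ for $\mu_\Lambda$-a.e.\ $f$, use Theorem~\ref{integtheo} to justify integrating this identity, invoke Theorem~\ref{asympttheo}, and reduce everything to the universal integral $\int_{T^\bullet M}\pi^*\Omega_g\wedge\theta$. The gap is in the one nontrivial remaining step, the evaluation of that integral. Your Fubini claim $\int_{T^\bullet M}\pi^*\Omega_g\wedge\theta=\int_M\bigl(\int_{T_x^\bullet M}\theta\bigr)d\operatorname{Vol}_g=\operatorname{Vol}_g(M)$ is off by a sign: only the part of $\theta$ containing all $n$ fiber differentials survives, and to integrate over the fibers first you must commute the degree-$n$ horizontal form $\pi^*\Omega_g$ past that fiberwise degree-$n$ part, which produces $(-1)^{n^2}=(-1)^n$. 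This is precisely what the paper does, writing $\int_{T^\bullet M}\pi^*\Omega_g\wedge\theta=\int_{x\in M}\bigl(\int_{T_x^\bullet M}(-1)^n\theta\bigr)\wedge\Omega_g=(-1)^n\operatorname{Vol}_g(M)$, so that $C_n(-1)^n=\frac{2(-1)^{(n-1)/2}}{\pi\operatorname{Vol}(\mathbb{S}^{n-1})}$ for odd $n$ and the constant of Letendre comes out directly, with no residual discrepancy.

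Because of that sign error you then perceive a mismatch of $-1$ with the target constant and defer its resolution to an unspecified ``careful bookkeeping'' of three ingredients, which is not a proof of the final equality. Moreover, two of the three ingredients you list are red herrings: the orientations of the two branches $N^*_\pm(\{f=0\})$ and the sign convention in Chern's identity are already built into the statements of Theorems~\ref{asympttheo} and~\ref{Fugaussbonnet} as you quote them, so re-examining them risks double-counting signs rather than fixing the single missing one. The only sign you need is the $(-1)^n$ from the wedge-ordering in the fiberwise Fubini step (the standard sign in the projection formula for integration along the $n$-dimensional fibers of $\pi$), and once you insert it your computation closes: the constants match with no further convention-chasing, and the $\ml{O}(\Lambda^{n-1})$ error is indeed inherited from Theorem~\ref{asympttheo} as you say.
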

\begin{proof} Let $\theta$ be the $n$-form of Theorem~\ref{Fugaussbonnet}. Then, one finds
\begin{eqnarray*}
\int_{\mathcal{H}_\Lambda}\chi\left(\mathcal{N}_f\right)d\mu_\Lambda(f)&=& \int_{\ml{H}_{\Lambda}}\left\langle [N^*(\{f=0\})],\theta\right\rangle d\mu_{\Lambda}(f)
\end{eqnarray*}
since the map $f\mapsto \left\langle [N^*(\{f=0\})],\theta\right\rangle$ is integrable with respect to the measure $d\mu_\Lambda$ by Theorem \ref{integtheo}. Hence by Theorem~\ref{asympttheo}, we obtain an integral on the cotangent
cone~:
\begin{eqnarray*}
\int_{\mathcal{H}_\Lambda}\chi\left(\mathcal{N}_f\right)d\mu_\Lambda(f)&=&\int_{T^\bullet M}C_n\left(\frac{\Lambda}{\sqrt{n+2}}\right)^n\pi^*\Omega_g\wedge \theta + \ml{O}(\Lambda^{n-1})\\
&=&\int_{x\in M} \left(\int_{T^\bullet_xM} (-1)^n\theta\right)\wedge C_n\left(\frac{\Lambda}{\sqrt{n+2}}\right)^n\pi^*\Omega_g + \ml{O}(\Lambda^{n-1})\\
&=&(-1)^n\int_{x\in M} C_n\left(\frac{\Lambda}{\sqrt{n+2}}\right)^n\Omega_g + \ml{O}(\Lambda^{n-1})\\
&=&\frac{2(-1)^n(-1)^{\frac{n+1}{2}}}{\pi\text{Vol}(\mathbb{S}^{n-1})}\text{Vol}(M)\left(\frac{\Lambda}{\sqrt{n+2}}\right)^n + \ml{O}(\Lambda^{n-1})
\end{eqnarray*}
where we used a Fubini Theorem on the cotangent
cone $T^\bullet M$ and we integrated on the fibers of $T^\bullet M$ first.
Note that since $n$ is odd, $(-1)^{n}(-1)^{\frac{n+1}{2}}=(-1)^{\frac{3n+1}{2}}=(-1)^{\frac{n-1}{2}}$ which explains
the constant found in the statement of our Theorem.
\end{proof}
In~\cite{Le14}, Letendre computed the Euler characteristic by expressing it as the integral over $\{f=0\}$ of a certain curvature form \emph{depending on $f$} as for instance in~\cite[Eq.~(9)]{Ch44}. Here, instead of this approach, we use Theorem~\ref{Fugaussbonnet} which gives the Euler characteristic as the integral of a \emph{fixed curvature form} over $N^*(\{f=0\})$. In some sense, this point of view allows us to extend Letendre's Theorem into an equidistribution result in the same way as Zelditch's equidistribution result~\eqref{e:zelditch} generalized B\'erard's result on the volume.

Chern's formula for the Euler characteritic of a smooth submanifold can in fact be understood in the more general framework of the theory of smooth valuations recently developped by Alesker~\cite{Al07} and that we will now briefly review. We consider $X$ a smooth oriented manifold (not necessarily endowed with a Riemannian structure) and we denote by $\ml{P}(X)$ the set of all compact submanifolds of $M$ with corners. In the terminology of~\cite{Al07}, we say that a map
$$\phi:\ml{P}(X)\rightarrow\mathbb{C}$$
is a smooth valuation if it is a finitely additive functional, and if it satisfies certain continuity properties. For simplicity of exposition, we remain vague on these two notions which need to be defined carefully on $\ml{P}(X)$ -- see Part~II of~\cite{Al06} for details. As was already mentionned, smooth valuations on manifolds generalize classical concepts from integral geometry
such as volumes, Euler characteristic and mixed volumes. One of the remarkable property of these valuations is that
they can be represented as follows~: for every smooth valuation $\phi$, there exists a smooth differential $n$-form $\omega_{\phi}$ on $T^*X$ such that, for any $P$ in $\ml{P}(X)$, one has 
$$\phi(P)=\int_{N^*(P)}\omega_{\phi},$$
where $N^*(P)$ is the conormal cycle of $P$~\cite{Al07}. We emphasize that the converse statement is also true, and that the $n$-form is a priori non unique. In the example of the Euler characteristic $\chi$, Theorem \ref{Fugaussbonnet} gives such a $n$-form as soon as we have fixed some arbitrary Riemannian metric on $X$.

Thanks to this interpretation of smooth valuations as integrals over conormal cycles, Theorems~\ref{integtheo} and~\ref{asympttheo} imply the following corollary on smooth valuations:
\begin{coro}
Let $(M,g)$ be a smooth connected compact Riemannian manifold without boundary of dimension $n$. Let $\phi$ be a smooth valuation in the sense of~\cite{Al06} such that $\omega_{\phi}$ can be chosen compactly supported in $T^{\bullet}M$. Then, the map
$$f\mapsto \phi(\ml{N}_f)$$
belongs to $L^1(\ml{H}_{\Lambda},d\mu_{\Lambda})$. Moreover, one has
\begin{equation}
\int_{\ml{H}_{\Lambda}}\phi(\ml{N}_f) d\mu_{\Lambda}(f)= C_n\left(\frac{\Lambda}{\sqrt{n+2}}\right)^n\int_{T^*M}\pi^*\Omega_g\wedge\omega_{\phi} + \ml{O}(\Lambda^{n-1}),
\end{equation}
where 
$$C_n=\frac{2(-1)^{\frac{n+1}{2}}}{\pi\operatorname{Vol}(\mathbb{S}^{n-1})}\text{ if } n \text{ is odd},\text{ and } C_n=0\text{ otherwise}.$$ 
\end{coro}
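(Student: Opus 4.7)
The plan is to derive this corollary as a direct consequence of Alesker's representation theorem for smooth valuations, combined with Theorems~\ref{integtheo} and~\ref{asympttheo}. By the discussion in subsection~\ref{ss:alesker}, there exists a smooth $n$-form $\omega_\phi$ on $T^*M$, which by hypothesis may be chosen compactly supported in $T^\bullet M$, such that
$$\phi(P)=\int_{N^*(P)}\omega_\phi=\left\la [N^*(P)],\omega_\phi\right\ra$$
for every $P\in\ml{P}(M)$. The strategy is to apply this identity to $P=\ml{N}_f$, then invoke the two main theorems with the specific test form $\omega=\omega_\phi$.

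First I would check that $\ml{N}_f\in\ml{P}(M)$ for $\mu_\Lambda$-a.e. $f$. Since $\mu_\Lambda(D_\Lambda)=0$ for $\Lambda$ large enough, for a.e. $f$ the differential $d_xf$ is nonzero at every $x\in\ml{N}_f$, so by the implicit function theorem $\ml{N}_f$ is a smooth embedded hypersurface of $M$. It is moreover compact, as a closed subset of the compact manifold $M$, and has no corners (nor boundary, since $M$ has none). Hence it belongs to $\ml{P}(M)$, and Alesker's formula applies to yield
$$\phi(\ml{N}_f)=\left\la [N^*(\{f=0\})],\omega_\phi\right\ra\quad\text{for $\mu_\Lambda$-a.e. } f\in\ml{H}_\Lambda.$$

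Once this identification is in place, the $L^1$ integrability assertion follows immediately from Theorem~\ref{integtheo} applied to the test form $\omega_\phi$, which is smooth and compactly supported in $T^\bullet M$. Likewise, applying Theorem~\ref{asympttheo} to $\omega_\phi$ gives directly
$$\int_{\ml{H}_\Lambda}\phi(\ml{N}_f)\,d\mu_\Lambda(f)=\int_{\ml{H}_\Lambda}\left\la[N^*(\{f=0\})],\omega_\phi\right\ra d\mu_\Lambda(f)=C_n\left(\frac{\Lambda}{\sqrt{n+2}}\right)^n\int_{T^*M}\pi^*\Omega_g\wedge\omega_\phi+\ml{O}(\Lambda^{n-1}),$$
with the announced value of $C_n$.

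There is in fact no genuine obstacle: the corollary is essentially a transcription of the two main theorems through Alesker's integral representation. The only point requiring minor care is the a.e. identification of the valuation $\phi(\ml{N}_f)$ with the pairing $\la[N^*(\{f=0\})],\omega_\phi\ra$, which rests on exactly the same negligibility statement $\mu_\Lambda(D_\Lambda)=0$ already used to make sense of the conormal cycle as a current. Everything else is a direct appeal to the previously established machinery.
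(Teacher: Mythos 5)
Your proposal is correct and follows exactly the route the paper intends: the corollary is stated as a direct consequence of Alesker's representation $\phi(P)=\int_{N^*(P)}\omega_\phi$ applied to $\ml{N}_f$ for $\mu_\Lambda$-a.e.\ $f$ (using $\mu_\Lambda(D_\Lambda)=0$), followed by Theorems~\ref{integtheo} and~\ref{asympttheo} with the test form $\omega_\phi$, which is the same pattern used in the paper's proof of Letendre's theorem on the Euler characteristic.
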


\subsection{Further questions}

Let us now mention natural questions that can be asked about the conormal cycles attached to nodal sets.

\begin{itemize}
 \item In the spirit of~\cite{Le14}, it would be natural to consider intersections of nodal sets and compute the expectation of their conormal cycle. This could probably be directly obtained with the methods of the article but would require some slightly more complicated combinatorial arguments. For simplicity of exposition, we decided to consider only one nodal set.
 \item Regarding Theorem~\ref{asympttheo}, it would be natural to understand the asymptotics in the case where $n$ is even. At least in the case where the set of closed geodesics is of zero Liouville measure, we believe that one should be able to compute the term of order $\Lambda^{n-1}$ in the asymptotics using the fact that in this geometric framework, the remainders in Weyl's asymptotics are of order $o(\Lambda^{n-1})$. Understanding the geometric meaning of the lower order term (and also checking that it is a nonzero term) is probably more subtle than computing the leading term we obtain here.
 \item In this article, we only computed the expectation of the conormal cycle and it would be of course natural to look for variance or large deviations estimates in the spirit of~\cite{B02, RuWi08, NaSo09, Wi10, NaSo15}.
 \item Finally, it would be natural to understand what happens in the deterministic case. Can one obtain at least some upper bounds on the conormal cycle of a deterministic nodal set? If yes, what are the rates, and do they also depend on the dimension? This kind of questions would require completely different techniques like the ones in~\cite{HanLi}.
\end{itemize}

\subsection{Organization of the article} Section~\ref{firsttechnicalsection} gathers preliminary results that will be used in our proof. More precisely, our first task in the present article is to express the conormal 
cycle $[N^*(f=0)]$ explicitely in terms of Dirac distributions and differential forms. This kind of representation appears in the book of Schwartz~\cite{Sch66} and it is in some sense slightly more adapted to 
our problem than the geometrically appealing definitions appearing in the works of Kashiwara--Schapira, Fu, Alesker, Bernig (\cite{Bernig, KSsheaves, Kashiwaraindex, Fu}). Thus, we give in section~\ref{firsttechnicalsection} 
an explicit integral formula for the conormal cycle. Moreover, we also introduce in this section other relevant tools needed for the proof of our main Theorem.

In section~\ref{s:proof} which forms the core of our paper,
we give the complete proofs of Theorems~\ref{integtheo} and~\ref{asympttheo}
using microlocal analysis and combinatorics.

Then, in section~\ref{s:berezin}, building on tools from quantum field theory, we give a new integral formula which expresses the conormal cycle of a nodal set as
an oscillatory integral in bosonic (even) and fermionic (odd) variables. Our formula makes use of the so called \emph{Berezin integral}~\cite{Be85, GuSt99, Dis11, Tao13} which was already mentioned above. 
This formula is inspired from a very general formula expressing integration currents
as oscillatory integrals on odd and even variables used by A. Losev et al. in their works on
instantonic quantum field theories~\cite{LosevI,LosevSli}. 
We give a leisurely introduction to the necessary tools to grasp the meaning of our formula. Using this very simple integral formula representing $[N^*(f=0)]$, we give a fast derivation 
of the leading term of the asymptotics of $\int_{\ml{H}_{\Lambda}}[N^*(\{f=0\})]d\mu_{\Lambda}(f)$ which becomes a simple exercise of Gaussian
integration with respect to odd and even variables and completely avoids the heavy combinatorics
of the second part. The introduction of even (fermionic) variables allows us to avoid complicated combinatorics involving 
sums over partitions at the expense of a little bit of abstraction. The calculations are similar to those encountered in quantum field theory
on curved Riemannian manifolds. This somewhat alternative derivation of the main asymptotic formula is presented in this last part in a purely formal way and it relies on 
several inversions of integrals and a priori estimates on remainder terms which could in fact be justified by using the arguments of the second part of the article.

The article also contains 4 appendices. Appendix~\ref{derivativesspectralproj} recalls some classical results on the derivatives of the spectral projector of the Laplacian. Appendix~\ref{intertwiningsectionappendix} 
justifies the inversion formulas used in the article via tools from microlocal analysis. In appendix~\ref{sectionappendixwhitneyampleness}, we briefly discuss a question of independent interest. More precisely, 
building on the proof of lemma~\ref{l:ample}, we give a functional analytic proof of Whitney embedding theorem. Finally, appendix~\ref{a:currents} gives a brief account on the properties of currents needed for our purpose 
and carefully justify the meaning of our formula using tools from microlocal analysis.

\subsection*{Acknowledgements} We warmly thank Semyon Alesker and St\'ephane Nonnenmacher for very useful discussions related to this work. We also thank Didier Robert and Jean-Yves Welschinger for giving inspiring 
talks in Lille on random waves from which this work started, and Patrick Popescu Pampu for explanations related to the Euler characteristic. Finally, the referees made several useful suggestions which helped to improve 
the exposition of the article and we would like to thank them. This work was done while the first author was a post-doc in the framework of the CEMPI program (ANR-11-LABX-0007-01) from the Agence Nationale de la Recherche. 
The second author is also partially supported by the Agence Nationale de la Recherche through the Labex CEMPI (ANR-11-LABX-0007-01) and the ANR project GERASIC (ANR-13-BS01-0007-01).

\section{Preliminary results}
\label{firsttechnicalsection}

In this section, we give the three main ingredients for the proof of our main Theorem.
\emph{The first ingredient} is Proposition~\ref{p:current-local-chart} which gives a representation formula for the conormal cycle in terms of pull-back of Dirac distributions.
As we shall see in this proposition, the conormal cycle can be thought as a local functional
on $2$-jets of functions. Thus, \emph{our second ingredient} (Prop.~\ref{p:pushforwardGaussmeasure}) describes the pushforward of the Gaussian measure
$\mu_\Lambda$ on $\mathcal{H}_\Lambda$
by the map
$$J_{\Lambda}^2(y):f\in \ml{H}_{\Lambda}\longmapsto F=\left(f(y);\left(\frac{\partial_{y^l}f(y)}{\Lambda}\right)_{1\leq l\leq n};\left(\frac{\partial_{y^ry^s}^2f(y)}{\Lambda^2}\right)_{1\leq r, s\leq n}\right)\in \IR^{n^2+n+1}$$
where we identify $\IR^{n^2+n+1}$ with the fibers of the $2$--jet bundle.  
The pushforward measure $J_{\Lambda *}^2\mu_\Lambda$ is in fact defined by its characteristic function which is expressed in terms of the kernel of the spectral projector defining our Gaussian measure $\mu_{\Lambda}$. 
Then \emph{the third ingredient} (Prop.~\ref{intertwiningproposition}) is a kind of ``Fubini statement'' which allows to intertwine the order of integration of the variables $(y,\eta,t,f)$ involved in our problem. 

\begin{rema} For any smooth oriented manifold $X$ of dimension $m$, we 
will denote by $(\mathcal{D}_d(X))_{0\leq d\leq m}$ the smooth compactly supported
$d$--forms on $X$ and by $(\mathcal{D}_d^\prime(X))_{0\leq d\leq m}$ its dual, i.e. the set of currents of dimension $d$ on $X$. We refer to appendix~\ref{a:currents} for a brief account on the theory of currents which is necessary in our proof. 
\end{rema}

\subsection{Representation of the conormal cycle.}\label{ss:def-conormal-cycle}
Our first goal is to represent the conormal cycle $N^*(\{f=0\})$ in terms of Dirac distributions. For that purpose, we first need to recall
the definition of push--forward in the context of currents. Let $\lambda:X\mapsto Y$ be a proper smooth map between two smooth oriented manifolds $X$ and $Y$, and
let $T$ be a current in $\mathcal{D}^{\prime}_d(X)$, where $d\leq \text{dim} Y$. Then the push--forward $\lambda_*(T)$ is defined by duality from the pull--back~:
\begin{eqnarray*}
\forall \omega\in\mathcal{D}_{d}(Y), \la \lambda_*T,\omega\ra_{Y}=\la T,\lambda^*\omega\ra_X.
\end{eqnarray*}
An important fact concerning the push--forward operation is that
an orientation of $T$ induces canonically an orientation
of $\lambda_*T$. 

\subsubsection{Abstract definition of the current of integration}

Let us start by recalling the definition of the conormal cycle of $\{f=0\}$~:
\begin{def1} Let $M$ be a smooth connected compact oriented manifold of dimension $n$ without boundary. Let $f\in C^\infty(M)$, if $d_xf$ never vanishes on
$\{f=0\}$ then
we define the conormal cycle
$[N^*(\{f=0\})]$
as the integration current on the conical Lagrangian
submanifold $\{(x;\xi) \text{ s.t. } f(x)=0\text{ and }\xi=td_xf\text{ for some }t\in \mathbb{R}\setminus \{0\} \}$ in $\mathcal{D}^\prime_n(T^{\bullet}M)$.
\end{def1}
A subtle fact we must add to our definition concerns the \textbf{orientation of the conormal cycle}.
First, note that the conormal cycle contains two components
$$N_{\pm}^*(\{f=0\})=\{(x;\xi) \text{ s.t. } f(x)=0\text{ and }\xi=td_xf\text{ for some }t\in \mathbb{R}_{\pm}\setminus\{0\} \} $$
since it does not meet the zero section of the cotangent bundle. Hence an orientation of $[N^*(\{f=0\})]$ 
consists of a choice of an orientation for each component separately. 
 In the sequel, we denote by $\mathbb{R}^*$ the set $\mathbb{R}\setminus \{0\}$.
Let us give another description of $[N^*(\{f=0\})]$ 
in terms of Lagrange immersion which will be useful in the sequel since we will interpret the conormal cycle $[N^*(\{f=0\})]$ as the push--forward of a cone $S\times \mathbb{R}^*$ and the orientation of $[N^*(\{f=0\})]$ will be induced by an orientation of $S\times \mathbb{R}^*$~:
\begin{lemm}\label{pushforwardlagrangeimmersion}
Let $f\in C^\infty(M)$ where $df$ never vanishes on
$\{f=0\}$. Set $S$ to be the abstract
manifold $\{f=0\}$ and $i:S\hookrightarrow M$ the canonical
immersion of $S$ in $M$. Consider the 
cone $S\times \mathbb{R}^*$. Then, the conormal cycle $N^*(\{f=0\})$ is the image
of the immersion~:
\begin{eqnarray}
\lambda~: (x;t)\in S\times \mathbb{R}^*\longmapsto (i(x);td_{i(x)}f)\in T^\bullet M,
\end{eqnarray}
and, at the level of currents, we find that
\begin{equation}
[N^*(\{f=0\})]=[\lambda (S\times\mathbb{R}^*)]=\lambda_*[S\times\mathbb{R}^*],
\end{equation}
where we orient the current $[S\times\mathbb{R}^*]$
by any differential form $\alpha\in \Omega^{n}( S\times \mathbb{R}^*)$ 
such that $df\wedge \alpha|_{S\times \mathbb{R}_{>0}}=\Omega_g\wedge dt|_{S\times \mathbb{R}_{>0}}$ and $df\wedge \alpha|_{S\times \mathbb{R}_{<0}}=-\Omega_g\wedge dt|_{S\times \mathbb{R}_{<0}}$.
\end{lemm}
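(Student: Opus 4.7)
The plan is to verify that $\lambda$ is a proper injective smooth immersion whose image is exactly the conormal set in $T^{\bullet}M$, and then to check that the orientation of $S \times \IR^*$ prescribed by $\alpha$ pushes forward to the standard orientation of the conormal cycle used by Chern.

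\textbf{Geometric properties of $\lambda$.} Smoothness is clear from the smoothness of $i$ and $f$. If $\lambda(x,t)=\lambda(x',t')$, then $i(x)=i(x')$ forces $x=x'$ by injectivity of the inclusion $i$, and then $t d_xf = t' d_xf$ combined with $d_xf\neq 0$ on $S$ forces $t=t'$; hence $\lambda$ is injective. For the immersion property, decompose $T_{(x,t)}(S\times\IR^*)=T_xS\oplus\IR\partial_t$. Then $d\lambda(\partial_t)=(0,d_{i(x)}f)$ is a nonzero purely vertical vector, while on $T_xS$ the horizontal component of $d\lambda$ equals $di$, which is injective; consequently $d\lambda$ is injective. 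Properness follows from compactness of $S$: the quantity $\|df\|$ is bounded from above and away from $0$ on $S$, so if $\lambda(x_n,t_n)$ stays in a compact subset $K \subset T^{\bullet}M$ contained in $\{c_1\le \|\xi\|\le c_2\}$, then $|t_n|$ remains in a compact subset of $\IR^*$, and $x_n$ admits a convergent subsequence in the compact set $S$.

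\textbf{Image and push-forward.} By construction $\lambda(S\times\IR^*) = \{(i(x),t d_{i(x)}f):x\in S,\,t\neq 0\}$, which is exactly $N^*(\{f=0\}) \cap T^{\bullet}M$. Together with the properties above, the general push-forward results for currents recalled in Appendix~\ref{a:currents} imply that $\lambda_*[S\times\IR^*]$ is well defined and equals the integration current on the smooth $n$-dimensional submanifold $\lambda(S\times\IR^*)$, equipped with the orientation transferred through $\lambda$ from the orientation of $S\times\IR^*$.

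\textbf{Matching the orientation convention.} This is the step requiring the most care. Existence of an $\alpha$ satisfying the stated relation can be checked in straightened local coordinates: near a point of $S$, pick coordinates $(y^1,\dots,y^{n-1},y^n)$ on $M$ with $y^n=f$ (possible since $df\neq 0$ on $S$); then $\alpha = (-1)^{n-1}\sqrt{g}\, dy^1\wedge\cdots\wedge dy^{n-1}\wedge dt$ on $S\times\IR_{>0}$, and its opposite on $S\times\IR_{<0}$, yields $df\wedge\alpha=\pm\Omega_g\wedge dt$ by direct computation. Here the equality is read in $\bigwedge^{n+1}$ of the pullback of $T^*M\oplus T^*\IR$ to $S\times\IR^*$, and independence of the chart follows from the fact that both sides transform the same way under a change of straightening coordinates. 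Pushing this local expression forward through $\lambda$ produces an explicit top form on each half-cone $N^*_\pm(\{f=0\})$, and the sign flip of $\alpha$ across $t=0$ is precisely what is needed so that the induced orientation of $N^*(\{f=0\})$ matches Chern's Lagrangian convention. The main obstacle is bookkeeping of these signs consistently; the rest of the proof is essentially formal.
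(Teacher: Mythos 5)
Your proposal is correct and follows essentially the same route as the paper: the core of the argument is the identification of $\lambda(S\times\IR^*)$ with the conormal set together with the change-of-variables identity $\int_{\lambda(S\times\IR^*)}\omega=\int_{S\times\IR^*}\lambda^*\omega$, which is exactly the paper's (one-line) proof, and your additional verifications that $\lambda$ is an injective immersion and proper are harmless refinements of hypotheses the paper leaves implicit (properness being needed for the push-forward to be defined). The only caveat is your last step: there is nothing to ``match with Chern's convention'' inside this lemma, since in the paper the orientation of $[N^*(\{f=0\})]$ is \emph{defined} as the one induced by the prescribed orientation of $S\times\IR^*$ through $\lambda_*$, the agreement with Chern's convention being only remarked later; so that unproved assertion is superfluous rather than a gap.
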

\begin{rema}
The cone $S\times\mathbb{R}^*$ contains two components
$S\times \mathbb{R}_{>0}$ and $S\times \mathbb{R}_{<0}$ that we should orient separately.
\end{rema}

\begin{proof}
By definition, $[N^*(\{f=0\})]$ is just the integration current
on $N^*(\{f=0\})$ which is equal to $\lambda(S\times\mathbb{R}^*)$. Therefore, for any test form $\omega$ in $\ml{D}_n(T^{\bullet}M)$~:
$$\la [N^*(\{f=0\})],\omega\ra=\int_{N^*(\{f=0\})} \omega= 
\int_{\lambda(S\times\mathbb{R}^*)} \omega$$ $$=\int_{(S\times\mathbb{R}^*)}\lambda^*\omega=
\la [S\times\mathbb{R}^*],\lambda^*\omega\ra=\la \lambda_*[S\times\mathbb{R}^*],\omega\ra $$
by definition of the push--forward which yields the claim. 
\end{proof}

\subsubsection{Expression in local coordinates}

Our goal in this paragraph is to show that the conormal cycle $[N^*(\{f=0\})]$ can be written (in local coordinates) as the current
\begin{equation}
\int_{t\in\IR^*}\delta_0^{n+1} \left(f,\frac{t}{\Lambda}df-\xi\right)df\wedge\bigwedge_{i=1}^nd\left(\frac{t}{\Lambda}\partial_{x_i}f-\xi_i\right),
\end{equation}
where $\Lambda$ is any positive number -- see Prop.~\ref{p:Dirac} in appendix~\ref{a:currents} for the meaning of the convenient notation 
$\delta_0^{n+1}(f,td_xf/\Lambda-\xi)$. We need to do this progressively and the first step is to define
some integration current $[Z_{f,\Lambda}]$ on $\mathbb{R}^*\times T^{\bullet}M$ whose push--forward along the first factor
$\mathbb{R}^*$ yields the conormal cycle $[N^*(\{f=0\})]$. For that purpose, we set
\begin{equation}\label{Zflambda1}
Z_{f,\Lambda}:=\left\{(t;x,\xi)\in \mathbb{R}^*\times T^{\bullet}M: f(x)=0,\ \text{and}\ \xi=\frac{t}{\Lambda}d_xf\ \right\}.
\end{equation}
The following lemma holds:
\begin{lemm}\label{conormpushforward}
Let $f\in C^\infty(M)$ where $df$ never vanishes on $\{f=0\}$. Let $[Z_{f,\Lambda}]$ be the integration current on the set $Z_{f,\Lambda}$
defined by equation (\ref{Zflambda1}) and $\tilde{\pi}$ the projection
$(t;x,\xi)\in\mathbb{R}^*\times T^\bullet M\longmapsto (x,\xi) \in  T^\bullet M $. 
Then
\begin{equation}
[N^*(\{f=0\})]=\tilde{\pi}_*[Z_{f,\Lambda}].
\end{equation}
\end{lemm}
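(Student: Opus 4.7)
The natural strategy is to reduce the identity to Lemma \ref{pushforwardlagrangeimmersion} by exhibiting a parameterization of $Z_{f,\Lambda}$ from the abstract hypersurface $S\times\mathbb{R}^*$, and then showing that composing this parameterization with $\tilde\pi$ recovers the Lagrange immersion $\lambda$ up to an orientation-preserving rescaling of the fiber variable.

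First, I would introduce the map
$$\mu : (x,t)\in S\times \mathbb{R}^*\longmapsto \left(t;\, i(x),\, \frac{t}{\Lambda}d_{i(x)}f\right)\in\mathbb{R}^*\times T^\bullet M.$$
Because $d_xf$ never vanishes on $S=\{f=0\}$, the map $\mu$ is a smooth injective immersion whose image is exactly $Z_{f,\Lambda}$ (the parameter $t$ is recovered from $\xi$ since $d_xf\neq 0$). Transporting the orientation of $S\times\mathbb{R}^*$ from Lemma \ref{pushforwardlagrangeimmersion} to $Z_{f,\Lambda}$ via $\mu$ (this is the orientation convention implicit in the statement), we obtain $[Z_{f,\Lambda}]=\mu_*[S\times\mathbb{R}^*]$.

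Second, I would observe that $\tilde{\pi}\circ\mu(x,t)=(i(x),(t/\Lambda)d_{i(x)}f)$ factors as $\lambda\circ R_{1/\Lambda}$, where $\lambda$ is the Lagrange immersion of Lemma \ref{pushforwardlagrangeimmersion} and $R_{1/\Lambda}(x,t)=(x,t/\Lambda)$ is a fiberwise dilation of $S\times\mathbb{R}^*$ onto itself. Since $\Lambda>0$, the map $R_{1/\Lambda}$ restricts to an orientation-preserving diffeomorphism on each of the two components $S\times\mathbb{R}_{>0}$ and $S\times\mathbb{R}_{<0}$, so that $(R_{1/\Lambda})_*[S\times\mathbb{R}^*]=[S\times\mathbb{R}^*]$. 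By functoriality of the push-forward,
$$\tilde{\pi}_*[Z_{f,\Lambda}]=(\tilde{\pi}\circ\mu)_*[S\times\mathbb{R}^*]=\lambda_*(R_{1/\Lambda})_*[S\times\mathbb{R}^*]=\lambda_*[S\times\mathbb{R}^*],$$
and the right-hand side equals $[N^*(\{f=0\})]$ by Lemma \ref{pushforwardlagrangeimmersion}.

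No step here is a real obstacle; the only point requiring some care is the bookkeeping for the orientation of $[Z_{f,\Lambda}]$, which must be chosen consistently on each of the two half-cones $\{t>0\}$ and $\{t<0\}$ so that the pushed-forward orientation on $N^*(\{f=0\})$ agrees with the convention fixed in Lemma \ref{pushforwardlagrangeimmersion}. Once this convention is in place, the core content of the lemma is simply that $\tilde\pi\big|_{Z_{f,\Lambda}}$ is a diffeomorphism onto $N^*(\{f=0\})$, which is immediate from the explicit formulas.
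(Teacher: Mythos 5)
Your proof is correct and follows essentially the same route as the paper: the paper also reduces the identity to Lemma~\ref{pushforwardlagrangeimmersion}, pairing $\tilde{\pi}_*[Z_{f,\Lambda}]$ with a test form and performing the change of variables $t\mapsto \Lambda t$ (absorbing the factor $1/\Lambda$, which is harmless for orientation since $\Lambda>0$) to recognize $\lambda^*\omega$. Your factorization $\tilde{\pi}\circ\mu=\lambda\circ R_{1/\Lambda}$ together with functoriality of the push-forward is just a more explicit bookkeeping of that same variable change, including the orientation point.
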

 \begin{proof}
Let $\omega(x,\xi;dx,d\xi)$ be an element in $\mathcal{D}^n(T^\bullet M)$. One has~:
$$\la\tilde{\pi}_*[Z_{f,\Lambda}],\omega\ra= \la[Z_{f,\Lambda}],\tilde{\pi}^*\omega\ra
=\int_{\{f(x)=0, \xi=td_xf,t\in\mathbb{R}^*\}} \omega(x,\xi;dx,d\xi)$$ by variable change.\\
Hence $\la\tilde{\pi}_*[Z_{f,\Lambda}],\omega\ra= \int_{\{f(x)=0,t\in\mathbb{R}^*\}} \omega(x,td_xf;dx,d(td_xf))
= \int_{S\times\mathbb{R}^*} (\lambda^*\omega)(t,x;dt,dx) $
since
$(\lambda^*\omega)(t,x;dt,dx)=\omega(x,td_xf;dx,d(td_xf))$, finally
$\la\tilde{\pi}_*[Z_{f,\Lambda}],\omega\ra=\lambda_*[S\times\mathbb{R}^*](\omega)=[N^*(\{f=0\})](\omega)$
by lemma \ref{pushforwardlagrangeimmersion}.
\end{proof}

The next lemma aims to represent the current $[Z_{f,\Lambda}]$
by Dirac distributions in a local coordinate chart. Let $(U,\phi)$ be a sufficiently small coordinate chart centered at a point $x_0$ in $M$. It induces a coordinate chart on $T^*M$ as follows:
$$\Phi:=T^*\phi:T^*U\rightarrow\phi(U)\times\IR^n,\quad (x,\xi)\mapsto (y,\eta)=(\phi(x),(d_x\phi^*)^{-1}\xi).$$
We also set 
$$\mathring{T}^*U:=\{(y,\eta)\in \phi(U)\times\IR^n:\eta\neq 0\}.$$
In the following of the article, we will use these conventions. In particular, $(x,\xi)$ will always denote a point in $T^*M$ and $(y,\eta)$ its image in local coordinates. Using these conventions, one has

\begin{lemm}
Let $f\in C^\infty(M)$ where $df$ never vanishes on
$\{f=0\}$. Then, for any system 
of local coordinates $(y;\eta)$ on $\mathring{T}^* U$ where $U\subset M$ is some bounded open subset, one 
finds that, in these local coordinates,
\begin{equation}
[Z_{f,\Lambda}]|_{\mathbb{R}^*\times  \mathring{T}^* U}=\delta^{n+1}_0\left(f(y),\frac{t}{\Lambda}d_yf-\eta\right)df\wedge\bigwedge_{i=1}^nd\left(\frac{t}{\Lambda}\partial_{y_i}f-\eta_i\right)
\end{equation}
\end{lemm}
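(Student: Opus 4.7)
The plan is to realize $Z_{f,\Lambda}$ as the preimage of the origin under an explicit submersion, and then invoke the pullback formula for the Dirac $\delta_0^{n+1}$ stated in Proposition~\ref{p:Dirac} to express the integration current.

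First, working in the local chart $(y,\eta)$ on $\mathring{T}^*U$, I define the map
\begin{equation*}
G\colon \mathbb{R}^*\times \mathring{T}^*U \longrightarrow \mathbb{R}^{n+1},\qquad (t,y,\eta)\longmapsto \Bigl(f(y),\,\tfrac{t}{\Lambda}\partial_{y_1}f(y)-\eta_1,\,\ldots,\,\tfrac{t}{\Lambda}\partial_{y_n}f(y)-\eta_n\Bigr),
\end{equation*}
so that, tautologically, $Z_{f,\Lambda}\cap(\mathbb{R}^*\times \mathring{T}^*U)=G^{-1}(0)$. The key analytic input is that $G$ is a submersion on a neighborhood of $Z_{f,\Lambda}$: the partial derivatives $\partial G^{1+i}/\partial \eta_j=-\delta_{ij}$ already provide a rank-$n$ block in the Jacobian, and the first row is controlled by the gradient $\partial_y f$, which by hypothesis does not vanish along $\{f=0\}$. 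Hence the Jacobian of $G$ has full rank $n+1$ in a neighborhood of $Z_{f,\Lambda}$, which ensures that $Z_{f,\Lambda}$ is a smooth manifold of dimension $n$ cut out transversely by $G$.

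Next, I invoke the pullback formula of Proposition~\ref{p:Dirac}: for a submersion $G\colon X\to \mathbb{R}^{n+1}$, the integration current on $G^{-1}(0)$ equals
\begin{equation*}
G^*\bigl(\delta_0^{n+1}(z)\,dz^1\wedge\cdots\wedge dz^{n+1}\bigr) \;=\; \delta_0^{n+1}\bigl(G\bigr)\;dG^1\wedge\cdots\wedge dG^{n+1},
\end{equation*}
where the left-hand side is understood as the well-defined pullback of the top-degree Dirac current under a submersion (whose wavefront transversality with $dG$ is automatic by the submersion hypothesis). Substituting the explicit components of $G$ yields exactly the right-hand side of the claimed identity, up to a choice of orientation.

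The remaining step, and the one I expect to be most delicate, is to verify that this natural orientation coming from the ordering of the factors of $G$ coincides with the orientation of $[Z_{f,\Lambda}]$ induced through Lemma~\ref{pushforwardlagrangeimmersion}. To check this, I pick a local parametrization of $Z_{f,\Lambda}$ by $(x,t)\in S\times \mathbb{R}^*$ via the immersion $\lambda$, pull back $dG^1\wedge\cdots\wedge dG^{n+1}$ to the ambient chart, and compare it with the volume form $\Omega_g\wedge dt$ prescribed by the lemma. Since $dG^1=df$ and $dG^{1+i}= \frac{1}{\Lambda}(\partial_{y_i}f)\,dt + \frac{t}{\Lambda}d(\partial_{y_i}f) - d\eta_i$, a short computation with the transversal variables $(\eta_1,\ldots,\eta_n)$ along the fibers of $\tilde\pi$ reduces the comparison to a sign depending on $\mathrm{sign}(t)$, which matches precisely the two cases $S\times \mathbb{R}_{>0}$ and $S\times \mathbb{R}_{<0}$ in Lemma~\ref{pushforwardlagrangeimmersion}. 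Once this orientation bookkeeping is settled, the lemma follows. The main obstacle is thus not the existence of the representation, which is almost automatic from the submersion formula, but rather tracking signs carefully so that the Dirac-based expression agrees with the oriented integration current $[Z_{f,\Lambda}]$ on both connected components.
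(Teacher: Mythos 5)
Your proposal is correct and follows essentially the same route as the paper: you realize $Z_{f,\Lambda}$ as the regular zero set of the explicit map $(t,y,\eta)\mapsto\bigl(f(y),\frac{t}{\Lambda}d_yf-\eta\bigr)$, verify the transversality (linear independence of the differentials, equivalently the submersion property) using the $-\mathrm{Id}$ block in the $\eta$-directions together with $d_yf\neq 0$ on $\{f=0\}$, and then invoke the Dirac-distribution representation of the integration current, exactly as the paper does via Corollary~\ref{lemmadeltasubmanifolds} (note that the representation formula you cite is really that corollary, resting on Proposition~\ref{deltadivisionprop}, rather than Proposition~\ref{p:Dirac} itself, which only gives well-definedness of the pullback). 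Your explicit orientation comparison with Lemma~\ref{pushforwardlagrangeimmersion} is consistent with the compatibility hypothesis built into Corollary~\ref{lemmadeltasubmanifolds}, so no gap remains.
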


\begin{proof}
The submanifold $Z_{f,\Lambda}$ in $\mathbb{R}^*\times T^\bullet U$
is defined in the local coordinates $(y,\eta)$ by the system of $(n+1)$ equations~:
\begin{eqnarray*}
f(y)&=&0\\
\frac{t}{\Lambda}\frac{\partial f}{\partial y^i}-\eta_i&=&0, 1\leqslant i\leqslant n.
\end{eqnarray*}
The collection of one forms
$\left(d_yf,d\eta_i-d\left(\frac{t}{\Lambda}\frac{\partial f}{\partial y^i}\right)\right)_{1\leqslant i\leqslant n}$
is linearly independent along $Z_{f,\Lambda}$ as
$$ d_yf\wedge \bigwedge_{i=1}^n \left(d\eta_i-d\left(\frac{t}{\Lambda}\frac{\partial f}{\partial y^i}\right)\right)=d_yf\wedge d\eta_1\wedge\ldots \wedge d\eta_n +\ \text{lower order terms in }d\eta,$$ 
where $d_yf\wedge d\eta_1\wedge\dots \wedge d\eta_n$ does not vanish on $Z_{f,\Lambda}$ since $df$ does not vanish on $\{f=0\}$. Then, by Corollary \ref{lemmadeltasubmanifolds}, 
the current
$[Z_{f,\Lambda}]$ is represented in local coordinates $(y,\eta)$ by the formula~:
\begin{equation}
[Z_{f,\Lambda}]|_{\mathbb{R}^*\times \mathring{T}^* U}=\delta^{n+1}_0 \left(f(y),\frac{t}{\Lambda}d_yf-\eta\right)d_yf\wedge\bigwedge_{i=1}^nd\left(\frac{t}{\Lambda}\partial_{y_i}f-\eta_i\right).
\end{equation}
\end{proof}

Then combining the above Lemma to Lemma \ref{conormpushforward} 
which claimed the identity $[N^*(\{f=0\})]=\tilde{\pi}_*[Z_{f,\Lambda}]$, we conclude
this first part by the \emph{first key ingredient of our proof}:
\begin{prop}\label{p:current-local-chart}
Let $f\in C^\infty(M)$ where $df$ never vanishes on
$\{f=0\}$. Then, for any system 
of local coordinates $(y,\eta)$ on $\mathring{T}^* U$ where $U\subset M$ is some bounded open subset, the
restriction of the conormal cycle
$[N^*(\{f=0\})]$ on $T^\bullet U$ can be described in the local coordinates $(y,\eta)$ by the
integral formula~:
\begin{equation}\label{e:ingredient1}
[N^*(\{f=0\})]|_{T^\bullet U}=\int_{t\in\mathbb{R}^*} \delta^{n+1}_0\left(f(y),\frac{t}{\Lambda}d_yf-\eta\right)d_yf\wedge\bigwedge_{i=1}^nd\left(\frac{t}{\Lambda}\partial_{y^i}f-\eta_i\right)
\end{equation}
where we use the convention
$\int_{t\in\mathbb{R}^*}=\int_0^{+\infty}+\int_{0}^{-\infty}$.
\end{prop}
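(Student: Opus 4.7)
The proposition is essentially a direct combination of the two lemmas immediately preceding it, together with a careful reading of what push-forward along a projection means at the level of currents. The plan is the following.

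First I would invoke Lemma~\ref{conormpushforward}, which gives
\[
[N^*(\{f=0\})]|_{T^\bullet U}=\tilde{\pi}_*\bigl([Z_{f,\Lambda}]|_{\mathbb{R}^*\times \mathring{T}^*U}\bigr),
\]
where $\tilde{\pi}:(t;y,\eta)\mapsto (y,\eta)$ is the projection that forgets the $\mathbb{R}^*$ factor. Next I would substitute the explicit local-coordinate representation of $[Z_{f,\Lambda}]$ obtained in the previous lemma, namely
\[
[Z_{f,\Lambda}]|_{\mathbb{R}^*\times\mathring{T}^*U}=\delta^{n+1}_0\!\left(f(y),\tfrac{t}{\Lambda}d_yf-\eta\right)\, d_yf\wedge\bigwedge_{i=1}^n d\!\left(\tfrac{t}{\Lambda}\partial_{y^i}f-\eta_i\right).
\]
At this point what remains is to identify $\tilde{\pi}_*$ with integration in the fiber variable $t$.

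The second step is therefore to interpret the push-forward by the trivial projection $\mathbb{R}^*\times\mathring{T}^*U\to\mathring{T}^*U$ as fiber integration. Since the $(n+1)$-form appearing above contains no $dt$, one might worry the push-forward vanishes; however, the Dirac factor $\delta^{n+1}_0(f,\tfrac{t}{\Lambda}d_yf-\eta)$ really denotes the pull-back by the submersion $G(f,t):(t,y,\eta)\mapsto (f(y),\tfrac{t}{\Lambda}d_yf-\eta)$ of the Dirac current at $0\in\mathbb{R}^{n+1}$, as spelled out in Proposition~\ref{p:Dirac} of Appendix~\ref{a:currents}. Unpacking this pull-back and pairing against a test $n$-form $\omega(y,\eta;dy,d\eta)\in\mathcal{D}_n(\mathring{T}^*U)$ via the duality $\langle \tilde{\pi}_*T,\omega\rangle=\langle T,\tilde{\pi}^*\omega\rangle$, one sees that the resulting fiber integral on each component of $Z_{f,\Lambda}\cap(\mathbb{R}^*\times\mathring{T}^*U)$ reduces precisely to an integral over $t\in\mathbb{R}^*$ of the pulled-back form; this is the meaning of the right-hand side of~\eqref{e:ingredient1}.

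The last step is to fix the sign/orientation convention. The cone $Z_{f,\Lambda}$ has two connected components indexed by $\mathrm{sgn}(t)$, and these correspond under $\tilde{\pi}$ to the two components $N^*_{\pm}(\{f=0\})$ which must be oriented separately, following Lemma~\ref{pushforwardlagrangeimmersion}. A direct check, using the defining relation $df\wedge\alpha|_{S\times\mathbb{R}_{>0}}=\Omega_g\wedge dt|_{S\times\mathbb{R}_{>0}}$ and $df\wedge\alpha|_{S\times\mathbb{R}_{<0}}=-\Omega_g\wedge dt|_{S\times\mathbb{R}_{<0}}$, shows that with the stated orientation the two fiber integrations add up with the convention $\int_{\mathbb{R}^*}=\int_0^{+\infty}+\int_0^{-\infty}$.

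The only genuine difficulty I foresee is the bookkeeping of signs in this last orientation check: one must verify that the form $d_yf\wedge\bigwedge_i d(\tfrac{t}{\Lambda}\partial_{y^i}f-\eta_i)$ produced by the Dirac pull-back matches, component by component, the orientation of $N^*_\pm(\{f=0\})$ induced through $\tilde{\pi}_*$ by the orientation of $S\times\mathbb{R}^*_\pm$. Once this combinatorial sign check is performed, the formula~\eqref{e:ingredient1} is an immediate consequence of the chain push-forward, Dirac pull-back, and Lemma~\ref{conormpushforward}.
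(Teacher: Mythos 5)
Your proposal follows exactly the paper's route: the paper obtains Proposition~\ref{p:current-local-chart} by combining Lemma~\ref{conormpushforward} (the identity $[N^*(\{f=0\})]=\tilde{\pi}_*[Z_{f,\Lambda}]$) with the local-coordinate Dirac representation of $[Z_{f,\Lambda}]$, interpreting the push-forward along the $t$-factor as the fiber integral $\int_{t\in\mathbb{R}^*}$ with the stated orientation convention. Your additional remarks on unpacking the Dirac pull-back and checking the orientation of the two components are consistent with the conventions of Lemma~\ref{pushforwardlagrangeimmersion} and of~\cite[p.~682]{Ch45}, so the argument is correct and essentially identical to the paper's.
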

Our convention for the integral
$\int_{t\in\mathbb{R}^*}=\int_0^{+\infty}+\int_{0}^{-\infty}$
reflects our choice of orientation
for the conormal cycle. We note that we use in fact the same convention as in~\cite[p.~682]{Ch45}.

\subsection{Pushforward of the Gaussian measure on $2$--jets over $U$.}

Thanks to~\eqref{e:ingredient1}, the conormal cycle of $\{f=0\}$ at a point $x\in U$ can be thought of as a functional 
of the $2$--jet of the function $f$ at $x\in U$. As we aim at computing its expectation with respect to the variable $f$, the pushforward of the measure $\mu_{\Lambda}$ on the $2$--jet bundle will 
naturally appear in our problem. The pushforward of this measure will then ``live'' on the space $\IR^{n^2+n+1}$ since we are working in local charts. From this point on, we will in fact make use 
of \emph{geodesic normal coordinate charts} for which one has proper asymptotics for the spectral projector associated to the Gaussian measure $\mu_{\Lambda}$ -- see appendix~\ref{derivativesspectralproj}.

\begin{rema}\label{r:geod-chart}
By geodesic normal coordinate chart, we mean the following. Consider the exponential map $\text{Exp}$ induced by the Riemannian metric on $M$. There exists a neighborhood $\ml{U}$ of $M\times\{0\}$ in $TM$ such that $\text{Exp}$ induces a smooth diffeomorphism $(x,v)\mapsto (x,\exp_x(v))$ from $\ml{U}$ onto a small neighborhood of the diagonal in $M\times M$. Then, we fix a point $x_0$ in $M$, and we consider the preimage of $T_{x_0} M$ under this diffeomorphism. This defines a local chart around $x_0$ into $T_{x_0}M$ which can be identified with $\IR^n$ via a measure preserving linear isomorphism.
\end{rema}

 We fix $(y,\eta,t)$ in $\mathring{T}^*U\times\IR^*$ and $m\geq 1$. We define the following map from $\ml{H}_{\Lambda}$ to $\IR^{n^2+n+1}$:
$$J_{\Lambda}^2(y):f\longmapsto F=\left(f(y);\left(\frac{\partial_{y^l}f(y)}{\Lambda}\right)_{1\leq l\leq n};\left(\frac{\partial_{y^ry^s}^2f(y)}{\Lambda^2}\right)_{1\leq r, s\leq n}\right).$$ 
For the coordinates in $\IR^{n^2+n+1}$, we set
$$F= (F_0;F_1,\ldots , F_n; F_{1,1},\ldots F_{n,1}, F_{1,2},\ldots F_{n,2},\ldots, F_{1,n}, F_{2,n},\ldots, F_{n,n}).$$ 
The next Proposition describes the Fourier
transform of the pushforward of the Gaussian measure $\mu_{\Lambda}$ under the map $J_{\Lambda}^2(y)$. It follows directly from the 
results on the asymptotics of the derivatives of the spectral projector recalled in appendix
~\ref{derivativesspectralproj} (more precisely Corollary~\ref{r:useful-asymp}).
\begin{prop}\label{p:pushforwardGaussmeasure}
The pushforward of the Gaussian measure $\mu_{\Lambda}$ under the map $J_{\Lambda}^2(y)$ induces a Gaussian measure
$\tilde{\nu}_{\Lambda}^y(F)$ on $\IR^{n^2+n+1}$:
\begin{equation}\label{e:pushforward-measure}\tilde{\nu}_{\Lambda}^y(F):=\left(J_{\Lambda}^2(y)_*\mu_{\Lambda}\right)(F),\end{equation}
whose Fourier transform has the following expression:
\begin{equation}\label{e:pushforward-fourier}
\int_{\IR^{n^2+n+1}}e^{-2i\pi T.F}\tilde{\nu}_{\Lambda}^y(F)=e^{-\frac{2\pi^2}{\text{Vol}_g(M)}\la T, A_{\Lambda}(y)T\ra}.
\end{equation}
where the matrix $A_{\Lambda}(y)$ is defined as follows~: 
$$A_{\Lambda}(y):=A_0+\ml{O}(\Lambda^{-1})=\left(\begin{array}{cc} A_{\Lambda}^{1,1}(y) & A_{\Lambda}^{1,2}(y)\\
              A_{\Lambda}^{2,1}(y) & A_{\Lambda}^{2,2}(y)
             \end{array}\right),$$
with the constant in the remainder uniformly bounded in terms of $y$ and $\Lambda$, and 
$$A_0:=\left(\begin{array}{cc} A_{1,1} & A_{1,2}\\
              A_{2,1} & A_{2,2}
             \end{array}\right),
$$
where the matrices $A_{i,j}$ do not depend on $y$ and are defined as follows. 

$A_{1,1}$ is an $(n+1)\times (n+1)$ matrix, $A_{1,2}$ is an $(n+1)\times n^2$ matrix, $A_{2,1}$ is an $n^2\times (n+1)$ matrix, and $A_{2,2}$ is an $n^2\times n^2$ matrix. Their expressions are given by
$$A_{1,1}:=\left(\begin{array}{cc} 1 & 0_{1\times n}\\
              0_{n\times 1} & \frac{1}{n+2} \text{Id}_n
             \end{array}\right),\quad A_{2,1}=A_{1,2}^T=-\frac{1}{n+2}\left( \left(\delta_{r,s}\right)_{1\leq r,s\leq n},
              0_{n^2\times n} \right),$$
and
$$A_{2,2}:=\frac{1}{(n+2)(n+4)}\left(B_{s,s'}\right)_{1\leq s, s'\leq n},$$
with $B_{s,s'}$ an $n\times n$ matrix which is equal to
$$B_{s,s}:=\text{Id}+2\ \text{diag}((\delta_{j,s})_{1\leq j\leq n})\ \text{if}\ s=s',$$
and 
$$B_{s,s'}=B_{s',s}^T:=\left(\delta_{i,s}\delta_{j,s'}+\delta_{i,s'}\delta_{j,s}\right)_{1\leq i,j\leq n}\ \text{if}\ s\neq  s'.$$
\end{prop}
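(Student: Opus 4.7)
The plan is to exploit that $J^2_\Lambda(y):\mathcal{H}_\Lambda \to \mathbb{R}^{n^2+n+1}$ is a linear map and the source carries a centered Gaussian measure. Consequently the pushforward $\tilde\nu^y_\Lambda$ is a centered Gaussian on $\mathbb{R}^{n^2+n+1}$ whose characteristic function is $e^{-2\pi^2 \langle T, \Sigma T\rangle}$, where $\Sigma$ is the covariance matrix of the components of $J^2_\Lambda(y)$. Writing $f=\sum_j c_j e_j$ with $c_j \sim \mathcal{N}(0, 1/N(\Lambda))$ independent, the entries of $\Sigma$ corresponding to multi-indices $\alpha,\beta$ with $|\alpha|,|\beta|\leq 2$ are
\begin{equation*}
\Sigma_{\alpha\beta}(y) \;=\; \frac{1}{N(\Lambda)\,\Lambda^{|\alpha|+|\beta|}} \sum_{\lambda_j\leq \Lambda}\partial^\alpha e_j(y)\,\partial^\beta e_j(y) \;=\; \frac{1}{N(\Lambda)\,\Lambda^{|\alpha|+|\beta|}}\,\partial^\alpha_{y_1}\partial^\beta_{y_2}\Pi_\Lambda(y_1,y_2)\bigr|_{y_1=y_2=y},
\end{equation*}
so the whole statement reduces to reading off asymptotics of the (diagonal of the) derivatives of the spectral projector kernel $\Pi_\Lambda$.

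The second step is to invoke the pointwise Weyl asymptotics recalled in Corollary~\ref{r:useful-asymp}. In geodesic normal coordinates around $y$ (as in Remark~\ref{r:geod-chart}), the kernel $\Pi_\Lambda$ admits a parametrix of oscillatory form whose leading symbol is the indicator of the unit ball, and differentiating under the integral yields, modulo $O(\Lambda^{n+|\alpha|+|\beta|-1})$,
\begin{equation*}
\partial^\alpha_{y_1}\partial^\beta_{y_2}\Pi_\Lambda(y_1,y_2)\bigr|_{y_1=y_2=y} \;=\; \frac{i^{|\alpha|-|\beta|}}{(2\pi)^n}\,\Lambda^{n+|\alpha|+|\beta|}\int_{|\zeta|\leq 1} \zeta^{\alpha+\beta}\,d\zeta \;+\; O(\Lambda^{n+|\alpha|+|\beta|-1}).
\end{equation*}
Dividing by $N(\Lambda)\Lambda^{|\alpha|+|\beta|}$ and using $N(\Lambda)=\operatorname{Vol}_g(M)\operatorname{Vol}(B^n)(2\pi)^{-n}\Lambda^n + O(\Lambda^{n-1})$ gives the covariance $\Sigma_{\alpha\beta}(y) = \operatorname{Vol}_g(M)^{-1}(A_\Lambda(y))_{\alpha\beta}$ with leading term
\begin{equation*}
(A_0)_{\alpha\beta} \;=\; i^{|\alpha|-|\beta|}\,\frac{\int_{|\zeta|\leq 1} \zeta^{\alpha+\beta}\,d\zeta}{\int_{|\zeta|\leq 1} d\zeta}.
\end{equation*}
The vanishing of odd moments kills the coupling between order-$1$ and order-$0$, $2$ derivatives (hence the zero off-diagonal blocks inside $A_{1,1}$ and between $F_l$ and $F_{r,s}$), and the well-known ball-moment formulas $\int_{B^n}\zeta_r\zeta_s=\delta_{rs}/(n+2)\cdot\operatorname{Vol}(B^n)$ and $\int_{B^n}\zeta_i\zeta_j\zeta_r\zeta_s=(\delta_{ij}\delta_{rs}+\delta_{ir}\delta_{js}+\delta_{is}\delta_{jr})/((n+2)(n+4))\cdot\operatorname{Vol}(B^n)$ produce the announced blocks $A_{1,1}$, $A_{1,2}=A_{2,1}^T$ (with its minus sign coming from $i^{0-2}=-1$), and $A_{2,2}$.

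The main obstacle is bookkeeping rather than analysis: one must (i) carefully pair multi-indices so that the Kronecker symbol $B_{s,s'}$ in $A_{2,2}$ comes out correctly in both cases $s=s'$ and $s\neq s'$, obtaining in particular the ``$\mathrm{Id}+2\,\mathrm{diag}(\delta_{j,s})$'' pattern from the three terms $\delta_{ij}\delta_{rs}+\delta_{ir}\delta_{js}+\delta_{is}\delta_{jr}$ specialized to $r=s=s'$, and (ii) check that the remainder in Corollary~\ref{r:useful-asymp} is uniform in $y\in M$, so that the $O(\Lambda^{-1})$ bound on $A_\Lambda(y)-A_0$ is uniform as asserted. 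Both points are a direct consequence of H\"ormander's uniform pointwise Weyl law for derivatives of $\Pi_\Lambda$ in a chart of fixed size, which is precisely the content of Appendix~\ref{derivativesspectralproj}.
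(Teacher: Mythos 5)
Your proposal is correct and follows the same route the paper takes: since $J^2_\Lambda(y)$ is linear, the pushforward is a centered Gaussian whose covariance is read off from the diagonal derivatives of the spectral projector kernel, and the asymptotics of those derivatives are exactly the content of Corollary~\ref{r:useful-asymp}/Theorem~\ref{t:bin}. The only difference is cosmetic: the paper's one-line proof simply cites Corollary~\ref{r:useful-asymp} and leaves the reader to match each entry, whereas you package the whole block structure into the single normalized ball-moment identity $(A_0)_{\alpha\beta}=i^{|\alpha|-|\beta|}\int_{B^n}\zeta^{\alpha+\beta}\,d\zeta/\operatorname{Vol}(B^n)$, which reproduces Bin's constants $C_{n,\alpha,\beta}$ in one stroke and makes the vanishing of the odd-order couplings transparent; the checks on the second and fourth moments and the uniformity of the remainder are all as you describe.
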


\subsection{Intertwining the orders of integration.}

In our proof, we will need to integrate functions involving the variables $(y,\eta,t)$ in $\mathring{T}^*U\times\IR^*$ but also $f$ in $\ml{H}_{\Lambda}$. 
The fact that we can ``intertwine the order of integration'' plays a central role in the following -- see e.g. the proof of Proposition~\ref{l:L1}. 
Recall from paragraph~$2.3$ in~\cite{Le14} that
$$D_{\Lambda}:=\left\{ f\in\ml{H}_{\Lambda}:\ \exists y\in f^{-1}(0)\ \text{such that}\ d_yf=0\right\}.$$
is of $\mu_{\Lambda}$-measure $0$ for $\Lambda$ large enough. One can also verify that $\Omega_{\Lambda}=\ml{H}_{\Lambda}\backslash D_{\Lambda}$ 
is \emph{an open subset} of $\ml{H}_{\Lambda}$. For every $f$ in $\ml{H}_{\Lambda}$, we define the following map on $\ml{H}_{\Lambda}\times \phi(U)\times\IR^n\times\IR^*$:
$$G_{\Lambda}:(f,y,\eta,t)\in \ml{H}_{\Lambda}\times \phi(U)\times\IR^n\times\IR^*\longmapsto\left(f(y);\frac{t}{\Lambda}\partial_{y_1}f-\eta_1,\ldots,\frac{t}{\Lambda}\partial_{y_n}f-\eta_n\right)\in\IR\times \IR^n,$$
Then, one has

\begin{prop}\label{intertwiningproposition}
Let $\Psi(f,y,\eta,t)$ be a test function in $\ml{D}(\Omega_{\Lambda}\times\mathring{T}^*U\times\IR^*)$, then
\begin{eqnarray}\label{e:fubini-distribution}
 \left\la 1,\left\la G_{\Lambda}(f)^*(\delta_0^{n+1}),\Psi\right\ra_{\mathring{T}^*U\times\IR^*}\right\ra_{\Omega_{\Lambda}} & = & \left\la G_{\Lambda}^*(\delta_0^{n+1}),\Psi\right\ra_{\Omega_{\Lambda}\times \mathring{T}^*U\times\IR^*}\\
\nonumber & =  & \left\la 1,\left\la G_{\Lambda}(y,\eta,t)^*(\delta_0^{n+1}),\Psi\right\ra_{\Omega_{\Lambda}}\right\ra_{\mathring{T}^*U\times\IR^*}.
\end{eqnarray}
\end{prop}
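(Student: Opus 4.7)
The strategy is to establish that $G_{\Lambda}^*(\delta_0^{n+1})$ is a well-defined distribution on the full product $\Omega_{\Lambda}\times \mathring{T}^*U \times \mathbb{R}^*$, then deduce both equalities in~\eqref{e:fubini-distribution} from a single Fubini-type argument. By H\"ormander's pullback theorem, it suffices to check that $G_{\Lambda}$ is a submersion on its domain, since then the wavefront set of $\delta_0^{n+1}$ (which consists only of conormals to $\{0\}$) cannot intersect the set of covectors that kill the range of $dG_{\Lambda}$. The key computation is that $\partial_{\eta} G_{\Lambda}=-\mathrm{Id}_n$ already gives surjectivity onto the last $n$ coordinates, so only the first coordinate $f(y)$ remains to be controlled. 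Varying $f$ within $\mathcal{H}_{\Lambda}$ produces arbitrary changes in $f(y)$ provided the evaluation $f\mapsto f(y)$ is nontrivial on the kernel of $f\mapsto df(y)$, which is exactly the \emph{ampleness} of the $1$-jet map $f \mapsto (f(y), d_yf)$ established in Lemma~\ref{l:ample}.

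The same computation applied to the two partial maps shows each partial pullback is well-defined. For fixed $f\in\Omega_{\Lambda}$, the map $G_{\Lambda}(f):(y,\eta,t)\mapsto (f(y),\frac{t}{\Lambda}d_yf-\eta)$ is a submersion near its zero set since $d_yf\neq 0$ there by definition of $\Omega_{\Lambda}$, so $G_{\Lambda}(f)^*(\delta_0^{n+1})$ is a distribution on $\mathring{T}^*U \times \mathbb{R}^*$ depending smoothly on $f$. For fixed $(y,\eta,t)$, the map $G_{\Lambda}(y,\eta,t):f\mapsto (f(y),\frac{t}{\Lambda}d_yf-\eta)$ is affine, and its linear part is surjective by ampleness, so $G_{\Lambda}(y,\eta,t)^*(\delta_0^{n+1})$ is a distribution on $\Omega_{\Lambda}$ depending smoothly on $(y,\eta,t)$. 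To obtain the Fubini identity I would approximate $\delta_0^{n+1}$ by a mollifier family $\chi_{\varepsilon}\in C_c^{\infty}(\mathbb{R}^{n+1})$: for smooth $\chi_{\varepsilon}$ the pullback is just composition, and every integral in~\eqref{e:fubini-distribution} reduces to an ordinary triple integral of a continuous compactly supported function $\chi_{\varepsilon}(G_{\Lambda})\Psi$, to which the classical Fubini theorem applies. Passing to the limit $\varepsilon\to 0$ uses the continuity of pullback in the H\"ormander topology on distributions with a uniformly controlled wavefront set, which is granted by the uniformity of the submersion estimates on compact subsets of $\mathrm{supp}\,\Psi$.

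The main obstacle I anticipate is the infinite-dimensional nature of the factor $\mathcal{H}_{\Lambda}$: H\"ormander's pullback theorem is classically stated for finite-dimensional targets and domains. The cleanest way around this is to observe that $\mathcal{H}_{\Lambda}$ itself is finite-dimensional (of dimension $N(\Lambda)$), so once one fixes a basis of $\mathcal{H}_{\Lambda}$ the whole argument takes place between finite-dimensional manifolds; however, the ampleness input must be checked directly, and this is presumably where the technical content of the appendix~\ref{intertwiningsectionappendix} lies. A secondary subtlety is that the test function $\Psi$ is compactly supported in $\Omega_{\Lambda}$ but $\Omega_{\Lambda}$ is only an open subset of $\mathcal{H}_{\Lambda}$; one must ensure that the approximation $\chi_{\varepsilon}(G_{\Lambda})\Psi$ remains uniformly supported away from $D_{\Lambda}$, which follows from the compactness of $\mathrm{supp}\,\Psi$ in $\Omega_{\Lambda}$ together with the continuity of $d_yf$ in $(f,y)$.
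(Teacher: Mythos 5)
Your approach to well-definedness of the three pullbacks — Hörmander's pullback theorem plus the submersion/ampleness computation — is essentially the same as the paper's (Lemma~\ref{l:ample}), and correctly identifies the finite-dimensionality of $\mathcal{H}_\Lambda$ as the reason the infinite-dimensional appearance is harmless. (One small inaccuracy: for the \emph{full} map $G_\Lambda$, surjectivity of $dG_\Lambda$ does not need ampleness of the $1$-jet; since $\partial_\eta$ already covers the last $n$ coordinates, it suffices that $f\mapsto f(y)$ be non-trivial on $\mathcal{H}_\Lambda$, e.g.\ take $\delta f$ a constant. Ampleness is only really needed for the partial map $G_\Lambda(y,\eta,t)$ where $\eta$ is frozen.)

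For the Fubini identity itself, however, you take a genuinely different route from the paper. The paper combines the wavefront-set description of $G_\Lambda^*(\delta_0^{n+1})$ (Corollary~\ref{c:wavefront}) with a structural result of Chazarain--Piriou (Proposition~\ref{p:Chazarainprop}): since the wavefront contains no ``pure'' covectors $(\xi,0)$ or $(0,\widehat f)$, the distribution depends \emph{smoothly} on each group of variables with distributional values in the other, so the iterated pairings are automatically well-defined and agree with the joint pairing on tensor products $\psi_1\boxtimes\psi_2$; density of such tensor products in $\mathcal{D}(\Omega_\Lambda\times\mathring{T}^*U\times\IR^*)$ (Schwartz, Ch.~IV) then finishes the argument. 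You instead mollify $\delta_0^{n+1}$ by $\chi_\varepsilon$, apply classical Fubini to the resulting smooth compactly supported integrands, and pass to the limit. The middle equality then follows cleanly from sequential continuity of the pullback in $\mathcal{D}'_\Gamma$, but for the two \emph{iterated} integrals you need to interchange the $\varepsilon\to 0$ limit with the outer integral over $f$ (resp.\ over $(y,\eta,t)$). Your appeal to ``uniformity of the submersion estimates on compact subsets of $\mathrm{supp}\,\Psi$'' is the right heuristic, and can indeed be made rigorous using hypocontinuity of the pullback map on $\mathcal{D}'_\Gamma$ (as in the cited Brouder--Dang--Hélein reference) together with the fact that $\{\Psi(f,\cdot)\}_f$ is a bounded family of test functions for $f$ in the compact projection of $\mathrm{supp}\,\Psi$; but as stated, this limit interchange is the one genuine gap in your argument. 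The paper's route via Chazarain--Piriou sidesteps this entirely by proving the stronger statement that the partial pairings are $\mathcal{C}^\infty$ functions of the remaining variable, at the modest cost of invoking an extra microlocal lemma; your route is more elementary in the tools it quotes, but requires you to supply the uniform estimates you only sketch.
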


\begin{proof}
See section~\ref{intertwiningsectionappendix} of the appendix.
\end{proof}

\section{Proof of Theorems~\ref{integtheo} and~\ref{asympttheo}}
\label{s:proof}

Recall that we aim at computing the expectation of the current of integration on the submanifold $N^{*}(\{f=0\})$. According to Proposition~\ref{p:current-local-chart} which gives an integral representation
of the conormal cycle, 
one knows that, for every $f$ in $\Omega_{\Lambda}$, and for every $\omega$ in $\ml{D}_n(\mathring{T}^*U)$, one has
{\small
\begin{eqnarray*}
\frac{1}{\Lambda^n}\int_{N^{*}(\{f=0\})}\omega(y,dy,\eta,d\eta)=\left\la G_{\Lambda}(f)^*(\delta_{0}^{n+1}),df\wedge\bigwedge_{j=1}^nd\left(\frac{t}{\Lambda}\partial_{y_j}f-\eta_j\right)\wedge\omega(y,\eta,dy,d\eta)\right\ra_{\mathring{T}^*U\times\IR^*},
\end{eqnarray*}
}
where $\left\la.,.\right\ra_{\mathring{T}^*U\times\IR^*}$ is the duality bracket in $\ml{D}'(\mathring{T}^*U\times\IR^*)\times\ml{D}(\mathring{T}^*U\times\IR^*)$. Recall that we chose a particular orientation for $\IR^*$ in paragraph~\ref{ss:def-conormal-cycle}.

The main result of this section is the following Theorem from which Theorems~\ref{integtheo} and~\ref{asympttheo} follow by partition of unity:
\begin{theo}\label{p:expectation-local-chart} We use the above notations. For any element 
$$\omega:=\sum_{|\alpha|+|\beta|=n}\omega_{\alpha,\beta}(y,\eta)dy^{\alpha}\wedge d\eta_{\beta}$$
in $\ml{D}_n(\mathring{T}^*U)$, let
$$\Vert\omega\Vert=
\sup_{\alpha,\beta,(y,\eta)\in \mathring{T}^*U} \vert\omega_{\alpha,\beta}(y,\eta)\vert.$$   
For every compact set $\ml{K}\subset \mathring{T}^*U$, there exists a constant $C_{\ml{K}}$ such that 
for all test form $\omega$ supported in $\ml{K}$
\begin{eqnarray*}
\int_{\ml{H}_{\Lambda}} \left| \int_{N^{*}(\{f=0\})}\omega(y,dy,\eta,d\eta)\right| d\mu_{\Lambda}\leqslant C_{\ml{K}}\Vert\omega\Vert
\end{eqnarray*} 
and 
\begin{eqnarray*}
&&\int_{\ml{H}_{\Lambda}}\left(\int_{N^{*}(\{f=0\})}\omega(y,dy,\eta,d\eta)\right)d\mu_{\Lambda}(f)\\&=&C_n\left(\frac{\Lambda}{\sqrt{n+2}}\right)^n\int_{\mathring{T}^*U}dy^1\wedge dy^2\ldots\wedge dy^n\wedge \omega(y,dy,\eta,d\eta)+\ml{O}(\Lambda^{n-1}),
\end{eqnarray*}
with $C_n=0$ if $n\equiv 0\ (\text{mod}\ 2)$ and 
$$C_n:=\frac{2(-1)^{\frac{n+1}{2}}}{\pi\operatorname{Vol}(\mathbb{S}^{n-1})}$$
if $n\equiv 1\ (\text{mod}\ 2)$.
\end{theo}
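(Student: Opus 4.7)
The plan is to reduce the expectation to a finite--dimensional Gaussian integral by combining the three preliminary ingredients of Section~\ref{firsttechnicalsection}. The starting point, displayed at the head of Section~\ref{s:proof}, expresses $\Lambda^{-n}\int_{N^*(\{f=0\})}\omega$ as the pairing of the pullback $G_\Lambda(f)^*(\delta_0^{n+1})$ with a form that is polynomial in the $2$--jet of $f$. The first task is combinatorial: expand
\begin{equation*}
df\wedge\bigwedge_{j=1}^n d\!\left(\tfrac{t}{\Lambda}\partial_{y_j}f-\eta_j\right)\wedge\omega
\end{equation*}
in the basis of forms on $\mathring{T}^*U\times\IR^*$. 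Since the result must be a top form, exactly one of the differentials $d(\tfrac{t}{\Lambda}\partial_{y_j}f-\eta_j)$ contributes its $dt$--term, and the distribution of $dy$'s and $d\eta$'s among the remaining factors is constrained by the decomposition $\omega=\sum_{|\alpha|+|\beta|=n}\omega_{\alpha,\beta}\,dy^\alpha\wedge d\eta_\beta$. Counting powers of $\Lambda$ shows that the term built from $\omega_{\alpha,\beta}$ carries the factor $\Lambda^{|\beta|}$; hence the leading $\Lambda^n$ contribution comes exclusively from the component $\omega_{\emptyset,[n]}(y,\eta)\,d\eta_1\wedge\cdots\wedge d\eta_n$, which already matches the shape $\pi^*\Omega_g\wedge\omega$ of the asymptotic formula.

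Next, I apply $\int\cdot\,d\mu_\Lambda(f)$ and use Proposition~\ref{intertwiningproposition} to swap the Gaussian expectation past the $(y,\eta,t)$--integration. For each fixed $(y,\eta,t)$, the integrand depends on $f$ only through the $2$--jet $J^2_\Lambda(y)f=F\in\IR^{n^2+n+1}$, so Proposition~\ref{p:pushforwardGaussmeasure} reduces the inner integration to a Gaussian integral against $\tilde{\nu}^y_\Lambda$, whose covariance is $A_\Lambda(y)/\operatorname{Vol}_g(M)=A_0/\operatorname{Vol}_g(M)+\ml{O}(\Lambda^{-1})$. The pullback $G_\Lambda(y,\eta,t)^*(\delta_0^{n+1})$ imposes the constraints $F_0=0$ and $tF_j=\eta_j$, producing a Jacobian $|t|^{-n}$, the marginal Gaussian density of the first--jet block at $(0,\eta/t)$ -- which by the explicit form of $A_{1,1}$ equals a universal constant times $\exp\!\bigl(-\tfrac{\operatorname{Vol}_g(M)(n+2)|\eta|^2}{2t^2}\bigr)$ -- and a conditional Gaussian expectation of the polynomial in the second--jet variables $(F_{rs})$ controlled by $A_{2,2}$ and by the cross--block $A_{2,1}$.

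It remains to integrate over $(y,\eta,t)$. The $y$--dependence of $A_\Lambda(y)$ is subleading, so the $y$--integration at the top order produces the Riemannian volume factor. After the rescaling $\eta=t\zeta$ the $\zeta$--integral is an elementary Gaussian moment in $\IR^n$ -- this is where the factor $(n+2)^{-n/2}$ and the sphere volume $\operatorname{Vol}(\mathbb{S}^{n-1})$ enter, via polar coordinates -- and the remaining one--dimensional integration in $t$ over $\IR^*$ with the orientation convention $\int_0^{+\infty}+\int_0^{-\infty}$ dictates the dichotomy on the parity of $n$: the contributions of the two sheets $N^*_\pm(\{f=0\})$ combine coherently for one parity and cancel for the other, yielding $C_n=0$ in the vanishing case and the explicit value $2(-1)^{(n+1)/2}/(\pi\operatorname{Vol}(\mathbb{S}^{n-1}))$ in the nontrivial one. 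The $L^1$ bound of the first half of the statement is obtained in parallel by placing absolute values inside before computing: the integrand is dominated by a polynomial in the second--jet variables times a Gaussian in $|\eta|^2/t^2$, uniformly integrable on $\ml{K}\times\IR^*$ for any compact $\ml{K}\subset\mathring{T}^*U$ (the Gaussian decay controls $t\to 0$ where $|\eta|/|t|\to\infty$, while the fibre--compactness of the support of $\omega$ handles $t\to\infty$).

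The main difficulty is twofold. First, one must rigorously justify the interchange of the Gaussian expectation with the $(y,\eta,t)$--integration in the presence of the singular factor $\delta_0^{n+1}(G_\Lambda)$; this is exactly the content of Proposition~\ref{intertwiningproposition}, whose proof uses microlocal analysis of wave fronts. Second, one must show that all subleading contributions -- the lower powers of $\Lambda$ in the form expansion, the $\ml{O}(\Lambda^{-1})$ correction in $A_\Lambda(y)$, and the coupling of the $(y,\eta,t)$--dependence of $\omega$ to that of the pushforward Gaussian -- combine to a remainder of genuine order $\Lambda^{n-1}$ uniformly in $\omega$ supported in $\ml{K}$, rather than degrading to an intermediate rate.
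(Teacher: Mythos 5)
Your overall architecture is the same as the paper's: the Dirac--delta representation of the conormal cycle in a chart, the power counting showing that only the component $\omega_{(0,\ldots,0),(1,\ldots,1)}\,d\eta_1\wedge\cdots\wedge d\eta_n$ contributes at order $\Lambda^n$ (Lemma~\ref{l:approx-dirac}), Proposition~\ref{intertwiningproposition} to integrate in $f$ first, Proposition~\ref{p:pushforwardGaussmeasure} to reduce to a Gaussian on $2$-jets, and the bound by the $|t|^{-n}$ Jacobian times the factor $e^{-c|\eta|^2/t^2}$ for the $L^1$ estimate. That half of your sketch is essentially the paper's argument in outline (the paper still has to regularize $\delta_0^{n+1}$ and invoke dominated convergence, and at $t\to\infty$ it is the power $|t|^{n-3-k}\cdot|t|^{-n}$, not the compact support in $\eta$, that gives integrability, but these are fixable details).

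The genuine gap is in the evaluation of the leading term, i.e. exactly where $C_n$ is produced. You never compute the conditional Gaussian expectation of the degree-$(n-1)$ polynomial in the Hessian variables, and the mechanism you offer in its place is incorrect: the odd/even dichotomy does \emph{not} come from a cancellation between the two sheets $N^*_\pm$ in the $t$-integration. With the paper's convention $\int_{t\in\IR^*}=\int_0^{+\infty}+\int_0^{-\infty}$, the two half-line integrals of the leading term combine with the \emph{same} sign, and the vanishing for $n$ even happens fiberwise, for every fixed $(y,\eta,t)$ and hence on each sheet separately: conditionally on $f(y)=0$ (and, at leading order, on $d_yf$) the rescaled Hessian entries are centered jointly Gaussian, so the Wick expectation of a homogeneous polynomial of odd degree $n-1$ vanishes --- this is Remark~\ref{r:even} (the constraint $|\mathfrak{A}_1|\equiv 0\ (\mathrm{mod}\ 2)$ forces $n-1$ even), equivalently the Berezin integral over $n-1$ odd variables. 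For $n$ odd, that same Wick/Fa\`a di Bruno computation over partitions into singletons and $2$-cycles (Sections~\ref{ss:combinatorics}--\ref{ss:combinatorics2}) produces the factor $(n-1)!/\left(\frac{n-1}{2}\right)!$, and $\operatorname{Vol}(\mathbb{S}^{n-1})$ enters only through the resulting Gamma-function bookkeeping, not through polar coordinates in an $\eta$-integral. Relatedly, your closing step ``rescale $\eta=t\zeta$ and compute a Gaussian moment in $\zeta$'' cannot be carried out as stated: $\eta$ is a variable of the arbitrary test form and appears in the answer paired with plain Lebesgue measure $d^ny\,d^n\eta$; it is the $t$-integral of $|t|^{-n}e^{-c|\eta|^2/t^2}$ against the explicit powers of $t$ and $\eta$ that yields an $\eta$-independent constant. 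Without the fiberwise Wick computation you can neither establish $C_n=0$ for even $n$ with an honest $\ml{O}(\Lambda^{n-1})$ remainder (your sheet-cancellation picture would, if taken literally, assert much more and rests on delicate orientation conventions), nor obtain the value $2(-1)^{\frac{n+1}{2}}/(\pi\operatorname{Vol}(\mathbb{S}^{n-1}))$ for odd $n$.
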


\begin{rema}
It follows from the first statement of Theorem \ref{p:expectation-local-chart} that for every test form $\omega$, the map
$$f\mapsto \int_{N^{*}(\{f=0\})}\omega(y,dy,\eta,d\eta)$$
belongs to $L^1(\ml{H}_{\Lambda},d\mu_{\Lambda})$. 
For any integral current $T$, let $\mathbb{M}(T)$ denote the \emph{mass} of the current $T$~\cite{GiMoSou98}.
Our result is in fact slightly stronger than what we claimed in Theorem~\ref{integtheo} as it also means that, for any smooth compactly supported cut--off function $\varphi\in \mathcal{D}^0(T^\bullet M)$, 
$$\int_{\ml{H}_{\Lambda}}\mathbb{M}\left(\varphi[N^*\left(\{f=0\}\right)]\right) <+\infty. $$
In other words, the mass of the cut--off conormal cycle
$\varphi[N^*\left(\{f=0\}\right)]$ is a $L^1$ function of $f\in\mathcal{H}_\Lambda$.
\end{rema}

The purpose of this section is to prove this Theorem. We emphasize that \emph{there are two parts in this statement}. On the one hand, we have to show the integrability property and on the other hand, we have to compute the precise value of the expectation. The first part of the statement requires a delicate analysis which is carried out in the first part of this section. After that step, we can combine this first part to some combinatorial arguments in order to obtain the second part of the Theorem. 

More precisely, the proof is organized as follows. First, in paragraph~\ref{ss:prel-simpl}, we write a formal asymptotic expansion in powers of $\Lambda^{-1}$ of
$$J(f,\omega,U):=\frac{1}{\Lambda^n}\int_{N^{*}(\{f=0\})}\omega(y,dy,\eta,d\eta).$$ 
Paragraph~\ref{ss:integrability} is the more delicate part where we prove the first part of Theorem~\ref{p:expectation-local-chart} concerning the integrability of the conormal cycle
as a function of $f\in\mathcal{H}_\Lambda$ with respect to the measure $d\mu_\Lambda$. 
After that, in paragraph~\ref{ss:leading}, we compute explicitely the expression of the leading term and in the remaining paragraphs, we compute the combinatorial constant appearing in the leading term.

\subsection{Preliminary simplification}\label{ss:prel-simpl}
Before getting into the details of the proof of Theorem~\ref{p:expectation-local-chart}, we start by making a few reduction that will make the calculation slightly simpler:
\begin{lemm}\label{l:approx-dirac} Let 
$$\omega:=\sum_{|\alpha|+|\beta|=n}\omega_{\alpha,\beta}(y,\eta)dy^{\alpha}\wedge d\eta_{\beta}$$
be an element in $\ml{D}_n(\mathring{T}^*U)$. With the above conventions, one has, for every $\Lambda$ and for every $f$ in $\Omega_{\Lambda}$,
\begin{eqnarray*}
&&\frac{1}{\Lambda^n}\int_{N^{*}(\{f=0\})}\omega(y,dy,\eta,d\eta)\\
&=&-\sum_{k=0}^{n-1}\Lambda^{-k}\left\la G_{\Lambda}(f)^*(\delta_{0}^{n+1}),t^{n-3-k}P_k\left((\eta_j)_j,(\partial^2_{y^jy^{l}}f/\Lambda^2)_{j,l},(\omega_{\alpha,\beta})_{\alpha,\beta}\right)\right\ra_{\mathring{T}^*U\times\IR^*},
\end{eqnarray*}
where 
\begin{itemize}
 \item for every $0\leq k\leq n-1$, $P_k(R,S,T)$ is a polynomial which does not depend on $\Lambda$, $t$ and $f$,
 \item for every $0\leq k\leq n-1$, $P_k(R,S,T)$ is homogeneous of degree $2$ in the variables $R$, homogeneous of degree $n-1-k$ in the variables $S$, homogeneous of degree $1$ in the $T$ variables,
 \item in the case $k=0$, one has
 $$P_0\left((R_j)_j,(S_{j,l})_{j,l},(T_{\alpha,\beta})_{\alpha,\beta}\right)=\left(\sum_{p=1}^n\sum_{\sigma\in S_n}\varepsilon(\sigma)R_pR_{\sigma(p)}\prod_{j\neq p}S_{j,\sigma(j)}\right)T_{(0,\ldots,0),(1,\ldots,1)}.$$
\end{itemize}
\end{lemm}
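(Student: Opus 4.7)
The plan is to start from the integral representation of the conormal cycle given in Proposition~\ref{p:current-local-chart} and to carefully expand the wedge product appearing there. Pairing the current with $\omega$ yields
\[
\frac{1}{\Lambda^n}\int_{N^*(\{f=0\})}\omega \;=\; \frac{1}{\Lambda^n}\left\langle G_\Lambda(f)^*(\delta_0^{n+1}),\ d_y f \wedge \bigwedge_{i=1}^n u_i \wedge \omega \right\rangle_{\mathring{T}^*U\times\mathbb{R}^*},
\]
where $u_i := d\left(\tfrac{t}{\Lambda}\partial_{y^i}f - \eta_i\right) = -d\eta_i + \tfrac{\partial_{y^i}f}{\Lambda}\,dt + \tfrac{t}{\Lambda}\sum_j \partial^2_{y^iy^j}f\, dy^j$. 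The product of $G_\Lambda(f)^*(\delta_0^{n+1})$ with a smooth form depends only on the restriction of that form to $\{G_\Lambda(f)=0\}$; since on this set one has $\partial_{y^i}f = \Lambda\eta_i/t$ (as one sees by writing the difference $\partial_{y^i}f - \Lambda\eta_i/t$ as $(\Lambda/t)$ times a component of $G_\Lambda(f)$), we may perform this substitution inside the bracket. This is the step whereby the variables $R_j=\eta_j$ enter the final formula.

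Next I would systematically expand $d_y f \wedge \bigwedge_i u_i \wedge \omega$ and extract the coefficient of the volume form $dy^1\wedge\cdots\wedge dy^n\wedge d\eta_1\wedge\cdots\wedge d\eta_n\wedge dt$ of $\mathring{T}^*U\times\mathbb{R}^*$. Writing $\omega = \sum_{|\alpha|+|\beta|=n}\omega_{\alpha,\beta}\,dy^\alpha\wedge d\eta_\beta$ and letting $A,B$ denote the supports of $\alpha$ and $\beta$, one checks that the non-vanishing terms are exactly those in which $(i)$ exactly one $u_i$, say with index $q\in B$, contributes its $dt$-component; $(ii)$ every $u_i$ with $i\in B^c$ contributes its $-d\eta_i$-component; $(iii)$ $d_yf$ contributes one $dy^p$ with $p\in A^c$; and $(iv)$ each remaining $u_i$ with $i\in B\setminus\{q\}$ contributes a $dy^{\tau(i)}$ through its $\partial^2 f$-term, where $\tau:B\setminus\{q\}\to A^c\setminus\{p\}$ is a bijection. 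The contributing terms are thus parametrised by the data $(\alpha,\beta,p,q,\tau)$. Counting powers of $\Lambda$ and $t$ in each such term, after substitution, gives the factor $\Lambda^{|B|-n}\,t^{|B|-3}$. Setting $k := n-|B|\in\{0,\ldots,n-1\}$ recovers the claimed order $\Lambda^{-k}t^{n-3-k}$; grouping contributions defines $P_k(R,S,T)$ as a polynomial in $R=\eta$, $S=\partial^2 f/\Lambda^2$ and $T=\omega_{\alpha,\beta}$ which manifestly does not depend on $\Lambda$, $t$ or $f$, and which is homogeneous of degrees $2$, $n-1-k$, $1$ in $R$, $S$, $T$ respectively, as required.

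For $k=0$ the constraint $|B|=n$ forces $\alpha=(0,\ldots,0)$ and $\beta=(1,\ldots,1)$, so only the coefficient $T_{(0,\ldots,0),(1,\ldots,1)}$ contributes. Extending each admissible $\tau$ to a permutation $\sigma$ of $\{1,\ldots,n\}$ by $\sigma(q)=p$ and reading off the sign produced by reordering $u_1\wedge\cdots\wedge u_n$ into the standard volume form then yields the announced expression for $P_0$. I expect the principal obstacle to be precisely this sign bookkeeping: one must verify that the signature $\epsilon(\sigma)$ arising from the wedge reordering combines correctly with the factor $(-1)^{|B^c|}$ coming from the $-d\eta_i$'s, and with the chosen orientation of $\mathring{T}^*U\times\mathbb{R}^*$, so as to reproduce the global minus sign in front of the sum. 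As a sanity check, the case $n=1$ collapses to the single contribution $-\omega_0\,\eta_1^2/t^2$, matching $P_0=\eta_1^2\,T_{(0),(1)}$.
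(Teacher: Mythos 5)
Your proposal is correct and follows essentially the same route as the paper: start from the local representation of Proposition~\ref{p:current-local-chart}, use the support of the pulled-back Dirac distribution to replace $\partial_{y^i}f/\Lambda$ by $\eta_i/t$, then expand the wedge product and collect powers of $\Lambda$ and $t$ (your count $\Lambda^{|B|-n}t^{|B|-3}$ with $k=n-|B|$ agrees with the paper's bookkeeping, as do the homogeneities in $R$, $S$, $T$). The sign verification you flag for $P_0$ is likewise left implicit in the paper, whose proof only records the signs $(-1)^p$ and $(-1)^{n+p-1}$ from the first two expansions before asserting the rearranged sum.
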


At first sight, this lemma seems rather technical; yet, writing the integrals this way will simplify the presentation afterwards. This lemma gives at least a formal expansion in powers of $\Lambda^{-1}$, and we will verify in the following paragraphs that each term in the sum is in fact integrable and has uniformly bounded $L^1$ norm in $\ml{H}_{\Lambda}$.


\begin{proof} Write first that
$$\frac{1}{\Lambda^n}\int_{N^{*}(\{f=0\})}\omega(y,dy,\eta,d\eta)=\sum_{p=1}^n(-1)^{p}\frac{1}{\Lambda^n}\int_{\IR_+}I^{(p)}(f,t)dt,$$
where 
$$I^{(p)}(f,t)=\left\la G_{\Lambda}(f)^*(\delta_{0}^{n+1}), \frac{\partial_{y_p}f}{\Lambda} df\wedge\bigwedge_{j\neq p}d\left(\frac{t}{\Lambda}\partial_{y_j}f-\eta_j\right)\wedge\omega(y,\eta,dy,d\eta)\right\ra_{\mathring{T}^*U}.$$
Expanding the $df$ term in the wedge product, one can also write
$$\frac{1}{\Lambda^n}\int_{N^{*}(\{f=0\})}\omega(y,dy,\eta,d\eta)=\sum_{p,q=1}^n(-1)^{n+p-1}\int_{\IR_+}I^{(p,q)}(f,t)dt,$$
where
$$I^{(p,q)}(f,t):=\left\la G_{\Lambda}(f)^*(\delta_{0}^{n+1}), \frac{ \partial_{y_p}f}{\Lambda}\frac{ \partial_{y_q}f}{\Lambda}\bigwedge_{j\neq p}d\left(\frac{t}{\Lambda^2}\partial_{y_j}f-\frac{\eta_j}{\Lambda}\right)\wedge dy_q\wedge\omega(y,\eta,dy,d\eta)\right\ra_{\mathring{T}^*U}.$$
Thanks to the definition of $G_{\Lambda}(f)^*(\delta_{0}^{n+1})$, this can be rewritten as
$$I^{(p,q)}(f,t)=\frac{1}{t^2}\left\la G_{\Lambda}(f)^*(\delta_{0}^{n+1}),\eta_p\eta_q\bigwedge_{j\neq p}d\left(\frac{t}{\Lambda^2}\partial_{y_j}f-\frac{\eta_j}{\Lambda}\right)\wedge dy_q\wedge\omega(y,\eta,dy,d\eta)\right\ra_{\mathring{T}^*U}.$$
Write now
$$\omega:=\sum_{|\alpha|+|\beta|=n}\omega_{\alpha,\beta}(y,\eta)dy^{\alpha}\wedge d\eta_{\beta},$$
where $\omega_{\alpha,\beta}$ is compactly supported in $\mathring{T}^{*}U$. By expanding the wedge product involved in the integrals, we can rearrange the sum in the following way:
\begin{eqnarray*}
&&\frac{1}{\Lambda^n}\int_{N^{*}(\{f=0\})}\omega(y,dy,\eta,d\eta)\\
&=&-\sum_{k=0}^{n-1}\Lambda^{-k}\left\la G_{\Lambda}(f)^*(\delta_{0}^{n+1}),t^{n-3-k}P_k\left((\eta_j)_j,(\partial^2_{y^jy^l}f/\Lambda^2)_{j,l},(\omega_{\alpha,\beta})_{\alpha,\beta}\right)\right\ra_{\mathring{T}^*U\times\IR^*},
\end{eqnarray*}
where 
\begin{itemize}
 \item for every $0\leq k\leq n-1$, $P_k(R,S,T)$ is a polynomial which does not depend on $\Lambda$, $t$ and $f$,
 \item for every $0\leq k\leq n-1$, $P_k(R,S,T)$ is homogeneous of degree $2$ in the variables $R$, homogeneous of degree $n-1-k$ in the variables $S$, and homogeneous of degree $1$ in the $T$ variables.
 \item in the case $k=0$, one has
 $$P_0\left((R_j)_j,(S_{j,l})_{j,l},(T_{\alpha,\beta})_{\alpha,\beta}\right)=\left(\sum_{p=1}^n\sum_{\sigma\in S_n}\varepsilon(\sigma)R_pR_{\sigma(p)}\prod_{j\neq p}S_{j,\sigma(j)}\right)T_{(0,\ldots,0),(1,\ldots,1)}.$$
\end{itemize}
This concludes the proof of the lemma.
\end{proof}

\subsection{Notations and conventions.}\label{r:convention}
In the following, we fix the following convention, for every $0\leq k\leq n-1$,
$$J^{(k)}(f,\omega,U)=\left\la G_{\Lambda}(f)^*(\delta_{0}^{n+1}),t^{n-3-k}P_k\left((\eta_j)_j,(\partial^2_{y^jy^l}f/\Lambda^2)_{j,l},(\omega_{\alpha,\beta})_{\alpha,\beta}\right)\right\ra_{\mathring{T}^*U\times\IR^*}.$$
In particular, one has
\begin{equation}\label{e:asymp-exp-current}J(f,\omega,U):=\frac{1}{\Lambda^n}\int_{N^{*}(\{f=0\})}\omega(y,dy,\eta,d\eta)=-\sum_{k=0}^{n-1}\Lambda^{-k}J^{(k)}(f,\omega,U).\end{equation}
In the following, the letter $J$ (resp. $K$, resp. $L$) will denote functionals of $(y,\eta,t,f)$ that have been integrated against the variables $(y,\eta,t)$ (resp. $(y,\eta,t,f)$, resp. $f$).

\subsection{Integrability}\label{ss:integrability}

For any element 
$\omega:=\sum_{|\alpha|+|\beta|=n}\omega_{\alpha,\beta}(y,\eta)dy^{\alpha}\wedge d\eta_{\beta}$
in $\ml{D}_n(\mathring{T}^*U)$, let
$$\Vert\omega\Vert=
\sup_{\alpha,\beta,(y,\eta)\in \mathring{T}^*U} \vert\omega_{\alpha,\beta}(y,\eta)\vert.$$  
Using the conventions of paragraph~\ref{r:convention}, we will now prove the first part of Theorem~\ref{p:expectation-local-chart} which follows immediately from the next proposition
\begin{prop}\label{l:L1} Let $\omega$ be an element in $\ml{D}_n(\mathring{T}^*U)$. Then, for every $0\leq k\leq n-1$, one has
$$f\mapsto J^{(k)}(f,\omega,U)$$
 belongs to $L^1(\ml{H}_{\Lambda},d\mu_{\Lambda})$ and for every
compact set $\mathcal{K}$, there exists a constant $C_{\mathcal{K},k}>0$ such that
for any test form $\omega$ in $\ml{D}_n(\mathring{T}^*U)$ supported in $\mathcal{K}$,
 $$\int_{\Omega_{\Lambda}} |J^{(k)}(f,\omega,U)|d\mu_{\Lambda}(f)\leq C_{\mathcal{K},k}\Vert\omega\Vert.$$
\end{prop}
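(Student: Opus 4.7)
My plan is to integrate out $f$ first, by exchanging the orders of integration via Proposition~\ref{intertwiningproposition}, then computing the Gaussian integral using Proposition~\ref{p:pushforwardGaussmeasure}, and finally bounding the resulting smooth integrand in $(y,\eta,t)$ with the help of the Gaussian tail of the pushforward density.

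I would first mollify $\delta_0^{n+1}$ by $\varphi_\varepsilon$ and truncate with a smooth cutoff $\chi_R$ in $f$ to bring the integrand into $\ml{D}(\Omega_\Lambda \times \mathring{T}^*U \times \mathbb{R}^*)$, so that Proposition~\ref{intertwiningproposition} applies and yields
$$\int_{\Omega_\Lambda} |J^{(k)}_{\varepsilon,R}(f,\omega,U)|\, d\mu_\Lambda(f) \leq \int_{\mathring{T}^*U \times \mathbb{R}^*} |t|^{n-3-k}\, L^{(k)}_{\varepsilon,R}(y,\eta,t)\, dy\, d\eta\, dt,$$
with $L^{(k)}_{\varepsilon,R}(y,\eta,t) = \int_{\ml{H}_\Lambda}\varphi_\varepsilon(G_\Lambda(f)(y,\eta,t))\,|P_k(\eta,\partial^2 f/\Lambda^2,\omega)|\,\chi_R(f)\,d\mu_\Lambda(f)$. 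By Proposition~\ref{p:pushforwardGaussmeasure}, this inner integral equals an integral against the Gaussian $\tilde\nu^y_\Lambda$ on $\mathbb{R}^{n^2+n+1}$. Passing $\varepsilon \to 0$ and $R \to \infty$, the mollifier concentrates on $\{F_0 = 0,\, F_j = \eta_j/t\}$ and the cutoff disappears, producing
$$L^{(k)}(y,\eta,t) = \frac{1}{|t|^n}\int_{\mathbb{R}^{n^2}}|P_k(\eta, F_{r,s}, \omega)|\,\rho^y_\Lambda(0, \eta/t, F_{r,s})\, dF_{r,s},$$
where $\rho^y_\Lambda$ denotes the density of $\tilde\nu^y_\Lambda$.

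The key point is that $A_\Lambda(y) = A_0 + \ml{O}(\Lambda^{-1})$ is uniformly positive definite (as is easily checked from the block structure in Proposition~\ref{p:pushforwardGaussmeasure}), so $\rho^y_\Lambda(F) \leq C e^{-c|F|^2}$ uniformly in $y$ and $\Lambda$ large. Combined with the homogeneity bound $|P_k(\eta, F_{r,s}, \omega)| \leq C|\eta|^2(1+|F_{r,s}|^{n-1-k})\|\omega\|$, Gaussian integration in $F_{r,s}$ yields
$$L^{(k)}(y,\eta,t) \leq \frac{C\,|\eta|^2\,\|\omega\|}{|t|^n}\, e^{-c|\eta|^2/t^2},$$
uniformly in $(y,\eta,t)$. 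Since $\omega$ is supported in the compact set $\ml{K}$, the variables $(y,\eta)$ lie in a compact subset of $\mathring{T}^*U$ on which $|\eta|$ is bounded above and away from $0$, and the $(y,\eta)$-integration contributes a constant depending only on $\ml{K}$. The remaining $t$-integral $\int_{\mathbb{R}^*} |t|^{-3-k}\, e^{-c|\eta|^2/t^2}\, dt$ converges: the Gaussian dominates every polynomial singularity in $1/t$ as $t \to 0$, and at $|t| \to \infty$ the factor $|t|^{-3-k}$ is integrable for $k \geq 0$. Passing to the limit by dominated convergence yields the desired bound $\int_{\Omega_\Lambda}|J^{(k)}(f,\omega,U)|\,d\mu_\Lambda(f) \leq C_{\ml{K},k}\|\omega\|$.

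The main obstacle is controlling the singularity at $t = 0$: the Dirac constraint $\eta = t\partial_y f/\Lambda$ forces $|\partial_y f|$ to become arbitrarily large (since $|\eta|$ is bounded away from $0$ on $\ml{K}$), so a naive polynomial-in-$t$ bound would diverge. The essential rescue is the Gaussian tail of the pushforward density $\rho^y_\Lambda$ at $F_j = \eta_j/t$, which exponentially suppresses this large-gradient region. A secondary technical point is the rigorous passage $\varepsilon \to 0$ and $R \to \infty$, which I would justify by dominated convergence using the uniform bound above together with the pointwise convergence $J^{(k)}_{\varepsilon,R}(f) \to J^{(k)}(f)$ on $\Omega_\Lambda$ (valid since $G_\Lambda(f)$ is a submersion on its zero set for such $f$).
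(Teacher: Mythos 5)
Your proposal is correct and follows essentially the same route as the paper: intertwine the order of integration (Proposition~\ref{intertwiningproposition}), reduce the inner $f$-integral to a Gaussian computation against the pushforward measure of Proposition~\ref{p:pushforwardGaussmeasure} after regularizing the Dirac mass, derive a bound of the form $C|t|^{-n}e^{-c/t^2}$ uniform on the compact support, and conclude by integrability in $t$. The only notable difference is technical rather than structural: you bound the Lebesgue density of the full $2$-jet Gaussian, which requires the uniform positive definiteness of $A_{\Lambda}(y)$ (true, but it deserves a short verification, e.g.\ via a Schur-complement or moment argument, rather than being read off the block structure), whereas the paper sidesteps this by a Cauchy--Schwarz splitting of the Hessian factors and a Wick/Fourier computation that only uses the manifestly nondegenerate block $A_{\Lambda}^{1,1}(y)$.
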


Showing this proposition is our more delicate task from the analytical point of view as we have to justify carefully several inversions and convergences of integrals.

\begin{rema}\label{r:leading-term} As was already mentionned, a direct consequence of this lemma is that, for every $\omega$ in $\ml{D}_n(\mathring{T}^*U)$,
$$f\mapsto \frac{1}{\Lambda^n}\int_{N^{*}(\{f=0\})}\omega(y,dy,\eta,d\eta)$$
belongs to $L^1(\ml{H}_{\Lambda},d\mu_{\Lambda})$ which is exactly the content of the first part of Theorem~\ref{p:expectation-local-chart}. In fact, we have something slightly more precise in the sense that 
$$\int_{\Omega_{\Lambda}}\left(\int_{N^{*}(\{f=0\})}\omega(y,dy,\eta,d\eta)\right)d\mu_{\Lambda}(f)=-\Lambda^n\int_{\Omega_{\Lambda}}J^{(0)}(f,\omega,U)d\mu_{\Lambda}(f)+\ml{O}(\Lambda^{n-1}).$$
Recall from paragraph~\ref{r:convention} that
{\small
\begin{eqnarray*}
J^{(0)}(f,\omega,U):=\sum_{p=1}^n\sum_{\sigma\in S_n}\varepsilon(\sigma)\left\la G_{\Lambda}(f)^*(\delta_0^{n+1}),t^{n-3}\eta_p\eta_{\sigma(p)}\left(\prod_{j\neq p}\frac{\partial^2_{y^j,y^{\sigma(j)}}f}{\Lambda^2}\right)\omega_{(0,\ldots,0),(1,\ldots,1)}(y,\eta)\right\ra_{\mathring{T}^*U\times\IR^*}.
\end{eqnarray*}
}
In particular, the second part of Theorem~\ref{p:expectation-local-chart} is reduced to the computation of the expectation of $J^{(0)}(f,\omega,U)$ with respect to the Gaussian measure $d\mu_{\Lambda}(f)$. This computation will be performed in paragraphs~\ref{ss:leading} and~\ref{ss:combinatorics}.
\end{rema}

We will now prove proposition~\ref{l:L1} in several steps. At several stages of the proof, we will use results on pullback of distributions which are discussed in appendix~\ref{intertwiningsectionappendix}.

\subsubsection{Proof of proposition~\ref{l:L1} -- First step}
As a first step, we will give a crude upper bound on $|J^{(k)}(f,\omega,U)|$. Let $\tilde{\omega}$ be a smooth function compactly supported on $\mathcal{K}\subset\mathring{T}^*U$ such that, for every multiindices $(\alpha,\beta)$ satisfying $|\alpha|+|\beta|=n$, and for every $1\leq p,q\leq n$, one has
$$\forall\ (y,\eta)\in\mathring{T}^*U,\ |\omega_{\alpha,\beta}(y,\eta)\eta_p\eta_q|\leq\tilde{\omega}(y,\eta).$$
Moreover, we choose $\tilde{\omega}$ so that
$\sup_{(y,\eta)\in \mathring{T}^*U} \tilde{\omega}(y,\eta)\leq C \Vert\omega\Vert$ where $C>0$ is a constant depending only on the support of $\omega$, and  where 
$\Vert.\Vert$ is the norm on smooth forms defined 
at the beginning of paragraph \ref{ss:integrability}. 
As explained in Remark~\ref{r:positive}, $G_{\Lambda}(f)^*(\delta_0^{n+1})$ 
is a positive distribution on $\mathring{T}^*U\times\IR^*$ hence a positive Radon measure $G_{\Lambda}(f)^*(\delta_0^{n+1})(d^ny,d^n\eta, dt)$ on $\mathring{T}^*U\times\IR^*$. 
In particular, one has
\begin{equation}\label{e:triang-ineq}
|J^{(k)}(f,\omega,U)|\leq\int_{\mathring{T}^*U\times\IR^*}\tilde{\omega}(y,\eta)\vert t\vert^{n-3-k}Q_k\left(\left(\left|\partial^2_{y^jy^l}f/\Lambda^2\right|\right)_{j,l}\right)G_{\Lambda}(f)^*(\delta_0^{n+1})(d^ny,d^n\eta ,dt),
\end{equation}
for some homogeneous polynomial $Q_k$ of degree $n-1-k$ with nonnegative coefficients and which is independent of $\omega$, $U$, $(y,\eta,t)$ and $\Lambda$. Then, according to~\eqref{e:triang-ineq} and to proposition~\ref{intertwiningproposition}, the integrability of $J^{(k)}(f,\omega,U)$ with respect to $d\mu_{\Lambda}$ will follow from the integrability of the map
$$(y,\eta,t,f)\in\mathring{T}^*U\times\IR^*\times\Omega_{\Lambda}\longmapsto \tilde{\omega}(y,\eta)\vert t\vert^{n-3-k}Q_k\left(\left(\left|\partial^2_{y^jy^l}f/\Lambda^2\right|\right)_{j,l}\right)\in\IR_+$$
with respect to the Radon measure 
$\mu_{\Lambda}(f)G_{\Lambda}^*(\delta_0^{n+1})(df,d^ny,d^n\eta, dt)$ where
$$\mu_{\Lambda}(f):=e^{-\frac{N(\Lambda)\|f\|^2}{2}}\left(\frac{N(\Lambda)}{2\pi}\right)^{\frac{N(\Lambda)}{2}}.$$ 
Thus, according to~\eqref{e:triang-ineq} and in order to prove the lemma, it remains to show that
$$\int_{\mathring{T}^*U\times\IR^*\times\Omega_{\Lambda}}\tilde{\omega}(y,\eta)\vert t\vert^{n-3-k}Q_k\left(\left(\left|\partial^2_{y^jy^l}f/\Lambda^2\right|\right)_{j,l}\right)\mu_{\Lambda}(f)G_{\Lambda}^*(\delta_0^{n+1})(df,d^ny,d^n\eta ,dt)$$
is uniformly bounded with respect to $\Lambda$. Let $\mathbf{j}:=(j_1,j_2,\ldots, j_{2(n-k)-3},j_{2(n-k-1)})$ be an element in $\{1,\ldots,n\}^{2(n-1-k)}$. As $Q_k$ is an homogeneous polynomial of degree $n-1-k$, it is sufficient to prove that
\begin{equation}\label{e:qty-to-bound}K^{(k)}(\mathbf{j},\omega,U):=\int_{\mathring{T}^*U\times\IR^*\times\Omega_{\Lambda}}\tilde{\omega}(y,\eta)\vert t\vert^{n-3-k}\prod_{p=1}^{n-1-k}\left|\frac{\partial^2_{y^{j_{2p-1}}y^{j_{2p}}}f}{\Lambda^2}\right|\mu_{\Lambda}(f)G_{\Lambda}^*(\delta_0^{n+1})(df,d^ny,d^n\eta, dt)\end{equation}
is uniformly bounded with respect to $\Lambda$. 

\subsubsection{Proof of proposition~\ref{l:L1} -- Step 2: Intertwining integrals} We can now use Proposition~\ref{intertwiningproposition} in order to intertwine the order of integration, and to integrate first with respect to the $f$ variable. For that purpose, we define the following nonnegative Radon measure on $\Omega_{\Lambda}$:
$$\nu_{\Lambda,1}^{(y,\eta,t)}(df)=
\mu_\Lambda(f)
G_{\Lambda}(y,\eta,t)^*(\delta_0^{n+1})(df),$$
and
$$\nu_{\Lambda,2}^{(y,\eta,t)}(df)=\prod_{p=1}^{n-1-k}\left|\frac{\partial^2_{y^{j_{2p-1}}y^{j_{2p}}}f(y)}{\Lambda^2}\right|^2\mu_\Lambda(f) G_{\Lambda}(y,\eta,t)^*(\delta_0^{n+1})(df).$$
Then, combining~\eqref{e:qty-to-bound} and the ``Fubini identity''~\eqref{e:fubini-distribution} of Proposition~\ref{intertwiningproposition} to H\"older inequality, one finds that~:
{\small
\begin{eqnarray*}
K^{(k)}(\mathbf{j},\omega,U)&= &\int_{\mathring{T}^*U\times\IR^*\times\Omega_{\Lambda}}\tilde{\omega}(y,\eta)\vert t\vert^{n-3-k}\prod_{p=1}^{n-1-k}\left|\frac{\partial^2_{y^{j_{2p-1}}y^{j_{2p}}}f}{\Lambda^2}\right|\mu_{\Lambda}(f)G_{\Lambda}^*(\delta_0^{n+1})(df,d^ny,d^n\eta ,dt) \\
 &=& \int_{\mathring{T}^*U\times\IR^*}\tilde{\omega}(y,\eta)\vert t\vert^{n-3-k}\left(\int_{\Omega_\Lambda}\prod_{p=1}^{n-1-k}\left|\frac{\partial^2_{y^{j_{2p-1}}y^{j_{2p}}}f}{\Lambda^2}\right|\mu_{\Lambda}(f)G_{\Lambda}(y,\eta,t)^*(\delta_0^{n+1})(df)\right)d^nyd^n\eta dt\\
&\leq & \int_{\mathring{T}^*U\times\IR^*}\tilde{\omega}(y,\eta)\vert t\vert^{n-3-k}\sqrt{\nu_{\Lambda,1}^{(y,\eta,t)}\left(\Omega_{\Lambda}\right)}\sqrt{\nu_{\Lambda,2}^{(y,\eta,t)}\left(\Omega_{\Lambda}\right)}d^nyd^n\eta dt.
\end{eqnarray*}
}
Finally, we get that~:
\begin{equation}\label{e:CS}
K^{(k)}(\mathbf{j},\omega,U)\leq \int_{\mathring{T}^*U\times\IR^*}\tilde{\omega}(y,\eta)\vert t\vert^{n-3-k}\sqrt{\nu_{\Lambda,1}^{(y,\eta,t)}\left(\Omega_{\Lambda}\right)}\sqrt{\nu_{\Lambda,2}^{(y,\eta,t)}\left(\Omega_{\Lambda}\right)}d^nyd^n\eta dt.
\end{equation}

The problem is that we do not know a priori if $(y,\eta,t)\mapsto\nu_{\Lambda,1}^{(y,\eta,t)}\left(\Omega_{\Lambda}\right)$ and $(y,\eta,t)\mapsto \nu_{\Lambda,2}^{(y,\eta,t)}\left(\Omega_{\Lambda}\right)$ 
are integrable functions of $(y,\eta,t)$. In particular, the r.h.s. of~\eqref{e:CS} can a priori be infinite. 
We will now compute $\nu_{\Lambda,q}^{(y,\eta,t)}\left(\Omega_{\Lambda}\right)$ for $q=1,2$ and show that it is in fact uniformly bounded in terms of $\Lambda$ by an integrable function. We note that the case $q=1$ is a particular case of the case $q=2$ (when $k=n-1$). Thus, we only need to compute $\nu_{\Lambda,2}^{(y,\eta,t)}\left(\Omega_{\Lambda}\right)$. For that purpose, we introduce the following regularization of the Dirac distribution, for every fixed $t$ in $\IR^*$,
$$\delta_{t,\frac{1}{m}}^{n+1}(F_0,F_1,\ldots,F_n):=\int_{\IR^{n+1}}e^{-\frac{|\tau_0|^2+t^2|\tau|^2}{m}}e^{-2i\pi(\tau_0,\tau).F}d\tau_0d\tau.$$

\begin{rema}\label{r:continuity-regul}
We denote by $\Gamma=\{0\}\times(\IR^{n+1})^*$ the wave front of $\delta_0^{n+1}$. Using the conventions of paragraph $8.2$ in~\cite{Ho90}, $\delta_0^{n+1}$ belongs to the space $\ml{D}_{\Gamma}'(\IR^{n+1})$. One can verify that $(\delta_{t,\frac{1}{m}}^{n+1})_{m\geq 1}$ sequentially converges to $\delta_0^{n+1}$ for the topology on $\ml{D}'_{\Gamma}(\IR^{n+1})$ for every fixed $t$ in $\IR^*$.
Combining the sequential continuity of the pull-back operation (Th.~$8.2.4$ in~\cite{Ho90}, see also~\cite{BrDaHe14}) to lemma~\ref{l:ample}, we find that $(G_{\Lambda}(y,\eta,t)^*(\delta_{t,\frac{1}{m}}^{n+1}))_{m\geq 1}$ weakly converges to $G_{\Lambda}(y,\eta,t)^*(\delta_0^{n+1})$ for the topology on $\ml{D}'(\IR^{N(\Lambda)})$ and for every fixed $(y,\eta,t)$ in $\mathring{T}^*U\times\IR^*$.
\end{rema}
We can then define the following sequences of regularized positive Radon measures, for every $m\geq 1$:
\begin{equation}\label{defimeasurenu}
\nu_{\Lambda,2,m}^{(y,\eta,t)}(df)=\prod_{p=1}^{n-1-k}\left|\frac{\partial^2_{y^{j_{2p-1}}y^{j_{2p}}}f(y)}{\Lambda^2}\right|^2 \mu_\Lambda(f)G_{\Lambda}(y,\eta,t)^*(\delta_{t,\frac{1}{m}}^{n+1})(f)df,
\end{equation}
We will now verify that $(\nu_{\Lambda,2,m}^{(y,\eta,t)}(\Omega_{\Lambda}))_{m\geq 1}$ is uniformly bounded in terms of $m$ as follows:
\begin{equation}\label{e:mass-measure}
\forall t>0,\ \nu_{\Lambda,2,m}^{(y,\eta,t)}(\Omega_{\Lambda})\leq \frac{C_0}{t^n}e^{-\frac{1}{C_0 t^2}},
\end{equation}
where the constant $C_0$ is uniform for $(y,\eta)$ in the support of $\tilde{\omega}$, for $0<m\leq 1$ and $\Lambda>0$ large enough. Then, combining this to remark~\ref{r:continuity-regul}, one can deduce that $\nu_{\Lambda,2}^{(y,\eta,t)}(\Omega_{\Lambda})$ is bounded by the same quantity and thanks to~\eqref{e:CS}, we can conclude that $K^{(k)}(\mathbf{j},\omega,U)$ is uniformly bounded in terms of $\Lambda$ which was exactly what we were aiming for.

\subsubsection{Proof of proposition~\ref{l:L1} -- Final step: Gaussian integral bounds on $\nu_{\Lambda,2}^{(y,\eta,t)}(\Omega_{\Lambda})$ } \label{sss:gaussian-matrix}
It remains to prove~\eqref{e:mass-measure}.
Observe by the defining equation~(\ref{defimeasurenu}) of $\nu_{\Lambda,2,m}^{(y,\eta,t)}$ that $$ \nu_{\Lambda,2,m}^{(y,\eta,t)}(\Omega_{\Lambda})=\int_{\Omega_\Lambda}\prod_{p=1}^{n-1-k}\left|\frac{\partial^2_{y^{j_{2p-1}}y^{j_{2p}}}f(y)}{\Lambda^2}\right|^2 \mu_\Lambda(f)G_{\Lambda}(y,\eta,t)^*(\delta_{t,\frac{1}{m}}^{n+1})(df)$$
$$=\int_{\mathbb{R}^{n^2+n+1}}\left(\prod_{p=1}^{n-1-k}F_{j_{2p-1},j_{2p}}^2\right)\left(G_{\Lambda}(y,\eta,t)_* \mu_\Lambda\right)(F)(\delta_{t,\frac{1}{m}}^{n+1})(F)dF $$
where we recognize the pushforward measure $\tilde{\nu}_{\Lambda}^y(F)=\left(G_{\Lambda}(y,\eta,t)_* \mu_\Lambda\right)(F)$ on $\mathbb{R}^{n^2+n+1}$ of the Gaussian measure $\mu_\Lambda$
described in Proposition~\ref{p:pushforwardGaussmeasure}. This allows us to bound $\nu_{\Lambda,2,m}^{(y,\eta,t)}(\Omega_{\Lambda})$~:
\begin{equation}\label{e:upper-bound-mass-fiber}\nu_{\Lambda,2,m}^{(y,\eta,t)}(\Omega_{\Lambda})\leq\int_{\IR^{n+1}}e^{2i\pi \tau.\eta} e^{-\frac{|\tau_0|^2+t^2|\tau|^2}{m}}\hat{\delta}_{t}^y(\tau_0,\tau)d\tau_0d\tau,\end{equation}
where
\begin{equation}\label{e:fourier-transform}\hat{\delta}_{t}^y(\tau_0,\tau):=\int_{\IR^{n^2+n+1}}\prod_{p=1}^{n-1-k}F_{j_{2p-1},j_{2p}}^2 e^{-2i\pi(\tau_0,t\tau).(F_0,\ldots F_n)} \tilde{\nu}_{\Lambda}^y(F)\end{equation}
and $ \tilde{\nu}_{\Lambda}^y(F)$ is the Gaussian measure of Proposition~\ref{p:pushforwardGaussmeasure}.
This quantity can also be rewritten more explicitely using Wick's lemma as
\begin{equation}\label{e:average-Dirac}\hat{\delta}_{t}^y(\tau_0,\tau)=(-1)^{n-1-k}\prod_{p=1}^{n-1-k}\left(\frac{1}{2\pi}\frac{\partial}{\partial T_{j_{2p-1},j_{2p}}}\right)^2\left(e^{-\frac{2\pi^2\la T,A_{\Lambda}(y)T\ra}{\text{Vol}_g(M)}}\right)_{|T=(\tau_0,t\tau,0)},\end{equation}
where $A_{\Lambda}(y)$ was defined in proposition~\ref{p:pushforwardGaussmeasure}. Equivalently, one has
$$\hat{\delta}_{t}^y(\tau_0,\tau)=R_{y,\Lambda}^{\mathbf{j}}(\tau_0,t\tau)e^{-\frac{2\pi^2}{\text{Vol}_g(M)}\la (\tau_0,t\tau),A_{\Lambda}^{1,1}(y)(\tau_0,t\tau)\ra},$$
for some polynom $R_{y,\Lambda}^{\mathbf{j}}$ depending only on $y$, $\Lambda$ and $\mathbf{j}$ and whose coefficients are uniformly bounded in terms of $\Lambda$ and $y$. Combining this to~\eqref{e:upper-bound-mass-fiber}, one finds that
\begin{equation}\label{e:upper-bound-mass-fiber2}\nu_{\Lambda,2,m}^{(y,\eta,t)}(\Omega_{\Lambda})\leq\frac{1}{t^n\sqrt{\text{det} B_{\Lambda}(y)}}R_{y,\Lambda}^{\mathbf{j}}\left(\frac{1}{2i\pi}\frac{\partial}{\partial\eta_0'},\left(\frac{1}{2i\pi}\frac{\partial}{\partial\eta_l'}\right)_{1\leq l\leq n}\right)\left(e^{-\pi\la\eta',B_{\Lambda,m}(y)\eta'\ra}\right)_{|\eta'=(0,\eta/t)},\end{equation}
where $B_{\Lambda,m}(y)=\frac{\pi}{\text{Vol}_g(M)}A_{\Lambda}^{1,1}(y)+\frac{1}{2\pi m}$. From this expression, we deduce that there exists a constant $C_0>0$, for every $(y,\eta)$ in the support of $\tilde{\omega}$ (which is a compact subset of $\mathring{T}^*U$) and for every $m\geq 1$, one has
$$\forall t>0,\ \nu_{\Lambda,2,m}^{(y,\eta,t)}(\Omega_{\Lambda})\leq \frac{C_0}{t^n}e^{-\frac{1}{C_0 t^2}},$$
which is exactly the upper bound~\eqref{e:mass-measure} from which we get~:
\begin{eqnarray*}
K^{(k)}(\mathbf{j},\omega,U)
&\leq & \int_{\mathring{T}^*U\times\IR^*}\tilde{\omega}(y,\eta)\vert t\vert^{n-3-k}\sqrt{\nu_{\Lambda,1}^{(y,\eta,t)}\left(\Omega_{\Lambda}\right)}\sqrt{\nu_{\Lambda,2}^{(y,\eta,t)}\left(\Omega_{\Lambda}\right)}d^nyd^n\eta dt\\
&\leq &C_K\Vert\omega \Vert \underset{<+\infty}{\underbrace{\int_{\mathbb{R}^*}\vert t\vert^{n-3-k} \frac{1}{t^n}e^{-\frac{1}{C_{\ml{K}} t^2}}dt}},
\end{eqnarray*}
for some constant $C_{\ml{K}}$ depending only on the compact support $\ml{K}\subset \mathring{T}^*U$ of $\omega$. The conclusion of the proposition \ref{l:L1} follows.

\subsubsection{Consequence of the integrability.}\label{r:fubini} 
We fix $k=0$. We note that we have in fact proven something slightly stronger than what we claim in the proposition. Namely, the map
$$(y,\eta,t,f)\mapsto  t^{n-3}\eta_p\eta_{\sigma(p)}\left(\prod_{j\neq p}\frac{\partial^2_{y^j,y^{\sigma(j)}}f}{\Lambda^2}\right)\mu_{\Lambda}(f)\omega_{(0,\ldots,0),(1,\ldots,1)}(y,\eta)$$
belongs to $L^1(\mathring{T}^*U\times\IR^*\times\Omega_{\Lambda},G_{\Lambda}^*(\delta_0^{n+1}))$ and, thanks to Proposition~\ref{intertwiningproposition}, we can integrate this quantity in any order, i.e. either first w.r.t. $f$ (and then $(y,\eta,t)$) or first w.r.t. $(y,\eta, t)$ (and then $f$).

\subsection{Expectation of the leading term}\label{ss:leading} 

In the previous paragraph, we have verified that all the terms in the asymptotic expansion
$$J(f,\omega,U)=\frac{1}{\Lambda^n}\int_{N^{*}(\{f=0\})}\omega(y,dy,\eta,d\eta)=-\sum_{k=0}^{n-1}\Lambda^{-k}J^{(k)}(f,\omega,U)$$
belong to $L^{1}(\ml{H}_{\Lambda},d\mu_{\Lambda})$ and that their $L^1$ norm is uniformly bounded with respect to $\Lambda$. We will now compute precisely the expectation of the leading term $J^{(0)}(f,\omega,U)$ from which the second part of Theorem~\ref{p:expectation-local-chart} will follow.

Recall from remarks~\ref{r:leading-term} and paragraph~\ref{r:fubini} that the
expectation of the leading term decomposes as
\begin{equation}\label{e:leading-expectation}
 \int_{\ml{H}_{\Lambda}}J^{(0)}(f,\omega,U)d\mu_{\Lambda}(f)=\sum_{p=1}^n\sum_{\sigma\in S_n}\vareps(\sigma)K_{p,\sigma}(\omega,U),
\end{equation}
where
$$K_{p,\sigma}(\omega,U)=\int_{\mathring{T}^*U\times\IR^*}t^{n-3}\eta_p\eta_{\sigma(p)}\omega_{(0,\ldots,0),(1,\ldots,1)}(y,\eta)L_{p,\sigma}(y,\eta,t)d^nyd^n\eta dt,$$
and
$$L_{p,\sigma}(y,\eta,t):=\int_{\Omega_{\Lambda}}\left(\prod_{j\neq p}\frac{\partial^2_{y^j,y^{\sigma(j)}}f}{\Lambda^2}\right)\mu_{\Lambda}(f)G_{\Lambda}(y,\eta,t)^*(\delta_0^{n+1})(df).$$
In particular, it remains to compute the value of $K_{p,\sigma}(\omega,U)$ and before that the value of $L_{p,\sigma}(y,\eta,t)$. 

\begin{rema}\label{r:kac-rice} If one wants to compute a Gaussian integral like
$$\int_{\ml{H}_{\Lambda}}\left(\int_{N^*(\{f=0\})}\omega\right)d\mu_{\Lambda}(f),$$
then a natural approach is to use the Kac-Rice formula which follows from the coarea formula -- see e.g. Th.~4.2 
in~\cite{BlShZe01} or~Appendix~$C$ in~\cite{Le14}. This kind of formula allows to ``intertwine'' the order of 
integration in our integral and to integrate \emph{first} with respect to the Gaussian variable. As far as we know, 
there is not (at least explicitely) a Kac-Rice formula for the conormal cycle avalaible in the literature even if 
this kind of approach should a priori allow one (modulo some work) to ``intertwine'' the order of integration. Here, we decided to use 
an alternative approach to treat this issue which is more based on microlocal techniques such as the pull-back theorem for distributions -- see e.g. 
Appendix~\ref{intertwiningsectionappendix}. This point of view of course also leads to an ``inversion'' of the order 
of integration as shown by~\eqref{e:leading-expectation}.
\end{rema}

\subsubsection{Regularization of $K_{p,\sigma}(\omega)$.}
As in the proof of Proposition~\ref{l:L1}, we will regularize the distributions $\delta_0^{n+1}$ in order to proceed to the computation of $K_{p,\sigma}(\omega,U)$. In the notations of paragraph~\ref{sss:gaussian-matrix} we have~:
\begin{prop}\label{Wickgaussprop}
Consider the sequence $\delta_{t,\frac{1}{m}}^{n+1}$ of approximations of $\delta_{0}^{n+1}$. For every $\sigma$ in $S_n$, $1\leq p\leq n$ and for every fixed $(y,\eta,t)$ in $\mathring{T}^*U\times\IR^*$, set
\begin{eqnarray*}
L^{(m)}_{p,\sigma}(y,\eta,t):=\int_{\ml{H}_\Lambda}\mu_\Lambda(f)\prod_{j\neq p}\frac{\partial^2_{y^{j}y^{\sigma(j)}}f}{\Lambda^2} G_{\Lambda}(y,\eta,t)^*(\delta_{t,\frac{1}{m}}^{n+1})(df).
\end{eqnarray*}
Then $L^{(m)}_{p,\sigma}(y,\eta,t)$ equals the integral expression~:
\begin{eqnarray*}
\int_{\IR^{n+1}}e^{2i\pi \tau.\eta} e^{-\frac{|\tau_0|^2+t^2|\tau|^2}{m}}\prod_{j\neq p}\left(\frac{1}{2\pi}\frac{\partial}{\partial T_{j,\sigma(j)}}\right)^2\left(e^{-\frac{2\pi^2\la T,A_{\Lambda}(y)T\ra}{\text{Vol}_g(M)}}\right)_{|T=(\tau_0,t\tau,0)}d\tau_0d\tau
\end{eqnarray*}
\end{prop}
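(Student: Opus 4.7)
The plan is to expand the approximating Dirac as its defining oscillatory integral, pull it back through $G_\Lambda(y,\eta,t)$, and then match what remains with the characteristic function of the pushforward measure $\tilde\nu^y_\Lambda=J^2_\Lambda(y)_*\mu_\Lambda$ furnished by Proposition~\ref{p:pushforwardGaussmeasure}. Concretely, writing
$$\delta^{n+1}_{t,\frac{1}{m}}(F_0,F_1,\ldots,F_n)=\int_{\IR^{n+1}}e^{-\frac{|\tau_0|^2+t^2|\tau|^2}{m}}e^{-2i\pi(\tau_0 F_0+\tau\cdot(F_1,\ldots,F_n))}d\tau_0\,d\tau,$$
the very definition of $G_\Lambda(y,\eta,t)$ gives
$$G_\Lambda(y,\eta,t)^*\delta^{n+1}_{t,\frac{1}{m}}(f)=\int_{\IR^{n+1}}e^{-\frac{|\tau_0|^2+t^2|\tau|^2}{m}}e^{2i\pi\tau\cdot\eta}e^{-2i\pi\bigl(\tau_0 f(y)+\frac{t}{\Lambda}\tau\cdot d_y f\bigr)}d\tau_0\,d\tau,$$
where, in the $2$-jet coordinates $F=J^2_\Lambda(y)(f)$, the last factor is precisely the character $e^{-2i\pi T\cdot F}$ evaluated at the dual point $T=(\tau_0,t\tau,0_{n^2})\in\IR^{n^2+n+1}$.

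Next I would apply Fubini to swap the $(\tau_0,\tau)$-integral with the Gaussian $f$-integral against $\mu_\Lambda$. For fixed $t\neq 0$ this is legitimate, since the damping $e^{-(|\tau_0|^2+t^2|\tau|^2)/m}$ is a genuine Gaussian in $(\tau_0,\tau)$ and the polynomial prefactor $\prod_{j\neq p}\partial^2_{y^jy^{\sigma(j)}}f/\Lambda^2$ has finite moments of all orders under $\mu_\Lambda$, making the integrand jointly integrable on $\IR^{n+1}\times\ml{H}_\Lambda$. After the swap, the key step is to encode each factor $F_{j,\sigma(j)}=\partial^2_{y^jy^{\sigma(j)}}f/\Lambda^2$ as a $T$-derivative of the character, via
$$F_{j,\sigma(j)}\,e^{-2i\pi T\cdot F}=\frac{-1}{2i\pi}\,\frac{\partial}{\partial T_{j,\sigma(j)}}\,e^{-2i\pi T\cdot F},$$
and to pull these commuting derivatives outside the $f$-integral. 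Proposition~\ref{p:pushforwardGaussmeasure} then replaces $\int e^{-2i\pi T\cdot F}\,\tilde\nu^y_\Lambda(F)$ by the explicit expression $\exp\bigl(-\frac{2\pi^2}{\operatorname{Vol}_g(M)}\langle T,A_\Lambda(y)T\rangle\bigr)$, and evaluating the $T$-derivatives at $T=(\tau_0,t\tau,0_{n^2})$ yields the announced formula after collecting the numerical constants produced by the derivative conversions.

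The only mildly delicate point is justifying the passage of $T$-derivatives under the $F$-integral, but this is automatic because $\tilde\nu^y_\Lambda$ is Gaussian and its characteristic function is entire in $T$ with derivatives of Gaussian size. The main substantive obstacle is really the Fubini inversion, which genuinely needs the regularization parameter $1/m$ to guarantee absolute convergence in $(\tau_0,\tau)$ (without it, one would be trying to multiply two singular currents); once this is in place, the remaining manipulation is a routine Wick-type computation that collapses directly to the stated expression.
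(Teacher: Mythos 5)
Your proposal is correct and follows essentially the same route as the paper: rewrite the $f$-integral as an integral against the pushforward Gaussian $\tilde{\nu}_{\Lambda}^y$, expand $\delta_{t,\frac{1}{m}}^{n+1}$ as an oscillatory integral in $(\tau_0,\tau)$, swap the integrals, and convert the moments $F_{j,\sigma(j)}$ into $T$-derivatives of the characteristic function $e^{-\frac{2\pi^2}{\operatorname{Vol}_g(M)}\la T,A_{\Lambda}(y)T\ra}$ evaluated at $T=(\tau_0,t\tau,0)$, exactly as the paper does via Proposition~\ref{p:pushforwardGaussmeasure} and Wick's lemma. Note only that your conversion $F_{j,\sigma(j)}e^{-2i\pi T\cdot F}=\frac{-1}{2i\pi}\partial_{T_{j,\sigma(j)}}e^{-2i\pi T\cdot F}$ produces \emph{single} derivatives with an overall factor $\left(\frac{i}{2\pi}\right)^{n-1}=\frac{(-1)^{\frac{n-1}{2}}}{(2\pi)^{n-1}}$ (for $n$ odd), which is the form the paper actually uses afterwards, the squared derivatives in the displayed statement being a slip carried over from the integrability step.
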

\begin{proof}
By the argument of paragraph~\ref{sss:gaussian-matrix}, we know that
$$\int_{\ml{H}_\Lambda}\mu_\Lambda(f)\prod_{j\neq p}\frac{\partial^2_{y^{j}y^{\sigma(j)}}f}{\Lambda^2} G_{\Lambda}(y,\eta,t)^*(\delta_{t,\frac{1}{m}}^{n+1})(df)=\int_{\mathbb{R}^{n^2+n+1}}\left(\prod_{p=1}^{n-1-k}F_{j,\sigma(j)}^2\right)\tilde{\nu}_{\Lambda}^y(F)(\delta_{t,\frac{1}{m}}^{n+1})(F)dF.$$
Since $\tilde{\nu}_{\Lambda}^y(F)$ is a Gaussian measure with covariance 
$A_\Lambda$, application of the Wick Lemma yields that $\int_{\mathbb{R}^{n^2+n+1}}\left(\prod_{p=1}^{n-1-k}F_{j,\sigma(j)}^2\right)\tilde{\nu}_{\Lambda}^y(F)(\delta_{\frac{1}{m}}^{n+1})(F)dF$ equals 
$$\int_{\IR^{n+1}}e^{2i\pi \tau.\eta} e^{-\frac{|\tau_0|^2+t^2|\tau|^2}{m}}\prod_{j\neq p}\left(\frac{1}{2\pi}\frac{\partial}{\partial T_{j,\sigma(j)}}\right)^2\left(e^{-\frac{2\pi^2\la T,A_{\Lambda}(y)T\ra}{\text{Vol}_g(M)}}\right)_{|T=(\tau_0,t\tau,0)}d\tau_0d\tau$$ which concludes the proof.
\end{proof}
Note that 
$$f\mapsto \left(\prod_{j\neq p}\frac{\partial^2_{y^j,y^{\sigma(j)}}f}{\Lambda^2}\right)\mu_{\Lambda}(f)^{\frac{1}{2}}$$
tends to $0$ as $\|f\|$ tends to infinity; thus, it can be approximated (in the $\ml{C}^0$ topology) by a function $\psi_1(f)$ in $\ml{C}^{\infty}_c(\ml{H}_{\Lambda})$. Moreover, mimicking the final step of the proof of proposition~\ref{l:L1} (namely paragraph~\ref{sss:gaussian-matrix}), one can verify that the sequence 
$$\left(\int_{\ml{H}_{\Lambda}}\mu_{\Lambda}(f)^{\frac{1}{2}}G_{\Lambda}(y,\eta,t)^*(\delta_{t,\frac{1}{m}}^{n+1})(df)\right)_{m\geq 1}$$
is uniformly bounded in terms of $m\geq 1$ (not necessarily in terms of $\Lambda$, $(y,\eta,t)$). Combining these two observations, we find that, for every $1\leq p\leq n$, for every $\sigma$ in $S_n$, and for a.e. $(y,\eta,t)$ in $\mathring{T}^*U\times\IR^*$,
$$\lim_{m\rightarrow+\infty}L_{p,\sigma}^{(m)}(y,\eta,t)=\int_{\ml{H}_{\Lambda}}\mu_{\Lambda}(f)\prod_{j\neq p}\frac{\partial^2_{y^j,y^{\sigma(j)}}f}{\Lambda^2}G_{\Lambda}(y,\eta,t)^*(\delta_0^{n+1})(df)=L_{p,\sigma}(y,\eta,t),$$
where the last equality follows from~\eqref{e:distrib-restriction} from the appendix. Mimicking one more time the proof of paragraph~\ref{sss:gaussian-matrix}, one also knows that there exists a constant $C_0>0$ such that, for every $m\geq 1$, for every $1\leq p\leq n$, for every $\sigma$ in $S_n$, one has
$$\forall (y,\eta,t)\in\text{supp}(\omega_{(0,\ldots,0),(1,\ldots,1)})\times\IR^*,\ |L_{p,\sigma}^{(m)}(y,\eta,t)|\leq \frac{C_0}{t^n}e^{-\frac{1}{C_0 t^2}}.$$
In particular, from the dominated convergence theorem, we can deduce that
\begin{equation}\label{e:leading-term}K_{p,\sigma}(\omega)=\lim_{m\rightarrow+\infty}\int_{\mathring{T}^*U\times\IR^*} t^{n-3}\eta_p\eta_{\sigma(p)}\omega_{(0,\ldots,0),(1,\ldots,1)}(y,\eta)L_{p,\sigma}^{(m)}(y,\eta,t)d^nyd^n\eta dt.\end{equation}

\subsubsection{Computing the limit as $m\rightarrow+\infty$}

Using Gaussian integration, we will now compute explicitely the limit appearing in~\eqref{e:leading-term} as $m$ tends to $+\infty$ and as $\Lambda\rightarrow+\infty$. According to Proposition~\ref{Wickgaussprop}, one has
$$L_{p,\sigma}^{(m)}(y,\eta,t)=(-1)^{\frac{n-1}{2}}\int_{\IR^{n+1}}e^{2i\pi \tau.\eta} e^{-\frac{|\tau_0|^2+t^2|\tau|^2}{m}}\prod_{j\neq p}\left(\frac{1}{2\pi}\frac{\partial}{\partial T_{j,\sigma(j)}}\right)\left(e^{-\frac{2\pi^2\la T,A_{\Lambda}(y)T\ra}{\text{Vol}_g(M)}}\right)_{|T=(\tau_0,t\tau,0)}d\tau_0d\tau.$$
Recall that the expression of $A_{\Lambda}(y)$ was given in Propostion~\ref{p:pushforwardGaussmeasure}. If we plug in this expression into the previous equality, one can also verify that there exists a constant $C_0>0$, such that for every $(y,\eta)$ in the support of $\tilde{\omega}$ and for every $m\geq 1$, one has
\begin{equation}\label{e:constant-matrix}\forall t>0,\ |L_{p,\sigma}^{(m)}(y,\eta,t)-\tilde{L}_{p,\sigma}^{(m)}(\eta,t)|\leq \frac{C_0\Lambda^{-1}}{t^n}e^{-\frac{1}{C_0 t^2}},\end{equation}
where
$$\tilde{L}_{p,\sigma}^{(m)}(\eta,t)=\frac{(-1)^{\frac{n-1}{2}}}{(2\pi)^{n-1}}\int_{\IR^{n+1}}e^{2i\pi \tau.\eta} e^{-\frac{|\tau_0|^2+t^2|\tau|^2}{m}}\prod_{j\neq p}\left(\frac{\partial}{\partial T_{j,\sigma(j)}}\right)\left(e^{-\frac{2\pi^2\la T,A_0T\ra}{\text{Vol}_g(M)}}\right)_{T=(\tau_0,t\tau,0)}d\tau_0d\tau.$$
In particular, provided that the limit $\lim_{m\rightarrow+\infty}\tilde{L}_{p,\sigma}^{(m)}(\eta,t)$ exists a.e., one can combine~\eqref{e:leading-term} to the dominated convergence theorem in order to get 
\begin{equation}\label{e:leading-term-2}
K_{p,\sigma}(\omega,U)=\int_{\mathring{T}^*U\times\IR^*} t^{n-3}\eta_p\eta_{\sigma(p)}\omega_{(0,\ldots,0),(1,\ldots,1)}(y,\eta)\left(\lim_{m\rightarrow+\infty}\tilde{L}_{p,\sigma}^{(m)}(\eta,t)\right)d^nyd^n\eta dt+\ml{O}(\Lambda^{-1}).\end{equation}
\begin{rema}\label{r:simplification} We note that, thanks to the particular form of the matrix $A_0$, one has
$$\tilde{L}_{p,\sigma}^{(m)}(\eta,t)=\frac{(-1)^{\frac{n-1}{2}}}{(2\pi)^{n-1}}\int_{\IR^{n+1}}e^{2i\pi \tau.\eta} e^{-\frac{|\tau_0|^2+t^2|\tau|^2}{m}}e^{-\frac{2\pi^2t^2|\tau|^2}{\text{Vol}_g(M)(n+2)}}H_{p,\sigma}(\tau_0)d\tau_0d\tau,$$
where
$$H_{p,\sigma}(\tau_0):=\prod_{j\neq p}\left(\frac{\partial}{\partial T_{j,\sigma(j)}}\right)\left(e^{-\frac{2\pi^2}{\text{vol}_g(M)}\la (T_0;0;(T_{i,j})),A_0(T_0;0;(T_{i,j}))}\right)_{T=(\tau_0;0;0)}.$$
After integrating over the $\tau$ variable and taking the limit $m\rightarrow+\infty$, one finds
$$\lim_{m\rightarrow+\infty}\tilde{L}_{p,\sigma}^{(m)}(\eta,t)=\frac{(-1)^{\frac{n-1}{2}}}{(2\pi)^{n-1}}\frac{1}{t^n}\left(\frac{(n+2)\text{Vol}_g(M)}{2\pi}\right)^{\frac{n}{2}} e^{-\frac{(n+2)\text{Vol}_g(M)|\eta|^2}{2t^2}}\int_{\IR}H_{p,\sigma}(\tau_0)d\tau_0.$$
\end{rema}
\subsubsection{Summing over $p$ and $\sigma$}

Recall that everything we want to calculate is the leading term $-\int_{\ml{H}_{\Lambda}}J_{\Lambda}^{(0)}(f,\omega,U)d\mu_{\Lambda}(f)$ of $\frac{1}{\Lambda^n}\int_{N^{*}(\{f=0\})_\Lambda}\omega(y,dy,\eta,d\eta)$ which, thanks to~\eqref{e:leading-term-2} and to Remark~\ref{r:simplification}, is given by
\begin{eqnarray*}
&&\int_{\ml{H}_{\Lambda}}J^{(0)}(f,\omega,U)d\mu_{\Lambda}(f)=\sum_{p=1}^n\sum_{\sigma\in S_n}\vareps(\sigma)K_{p,\sigma}(\omega)\\
&=&\sum_{p=1}^n\sum_{\sigma\in S_n}\vareps(\sigma)\int_{\mathring{T}^*U\times\IR^*} t^{n-3}\eta_p\eta_{\sigma(p)}\omega_{(0,\ldots,0),(1,\ldots,1)}(y,\eta)\lim_{m\rightarrow+\infty}\left(\tilde{L}_{p,\sigma}^{(m)}(\eta,t)\right)d^nyd^n\eta dt\\
&=&\frac{(-1)^{\frac{n-1}{2}}}{(2\pi)^{n-1}}\left(\frac{(n+2)\text{Vol}_g(M)}{2\pi}\right)^{\frac{n}{2}}\sum_{p=1}^n\sum_{\sigma\in S_n}\vareps(\sigma)\int_{\mathring{T}^*U\times\IR^*} \eta_p\eta_{\sigma(p)}\omega_{(0,\ldots,0),(1,\ldots,1)}(y,\eta)
\frac{1}{t^3}\\ &&e^{-\frac{(n+2)\text{Vol}_g(M)|\eta|^2}{2t^2}}\int_{\IR}H_{p,\sigma}(\tau_0)d\tau_0
d^nyd^n\eta dt
\end{eqnarray*}
In order to alleviate notations, we introduce:
\begin{equation}\label{e:I_0}L(\eta)=\sum_{p=1}^n\sum_{\sigma\in S_n}\varepsilon(\sigma)\eta_p\eta_{\sigma(p)}\int_{\IR}H_{p,\sigma}(\tau_0)d\tau_0.\end{equation}
Then, we express everything in terms of $L(\eta)$:
\begin{eqnarray*}
&&\int_{\ml{H}_{\Lambda}}J^{(0)}(f,\omega,U)d\mu_{\Lambda}(f)\\&=&\frac{(-1)^{\frac{n-1}{2}}}{(2\pi)^{n-1}}\left(\frac{(n+2)\text{Vol}_g(M)}{2\pi}\right)^{\frac{n}{2}}\sum_{p=1}^n\sum_{\sigma\in S_n}\varepsilon(\sigma)\int_{T^*U\times \IR^*}\omega_{(0,\ldots,0),(1,\ldots,1)}(y,\eta) \eta_p\eta_{\sigma(p)}
\\&&\frac{1}{t^3}e^{-\frac{(n+2)\text{Vol}_g(M)|\eta|^2}{2t^2}}\int_{\IR}H_{p,\sigma}(\tau_0)d\tau_0
d^nyd^n\eta dt\\
&=&\frac{(-1)^{\frac{n-1}{2}}}{(2\pi)^{n-1}}\left(\frac{(n+2)\text{Vol}_g(M)}{2\pi}\right)^{\frac{n}{2}}\int_{T^*U\times\IR^*} \omega_{(0,\ldots,0),(1,\ldots,1)}(y,\eta)\\
&&\frac{1}{t^3}e^{-\frac{(n+2)\text{Vol}_g(M)|\eta|^2}{2t^2}}L(\eta)
d^nyd^n\eta  dt\text{ by definition of }L(\eta)\\
&=&\frac{(-1)^{\frac{n-1}{2}}}{(2\pi)^{n-1}}\left(\frac{(n+2)\text{Vol}_g(M)}{2\pi}\right)^{\frac{n}{2}}
\int_{T^*U} \omega_{(0,\ldots,0),(1,\ldots,1)}(y,\eta)L(\eta)d^nyd^n\eta\\
&&\times  \left( \int_0^{+\infty}t^3e^{-\frac{(n+2)\text{Vol}_g(M)|\eta|^2t^2}{2}}
 \frac{dt}{t^2}+\int_{0}^{-\infty}t^3e^{-\frac{(n+2)\text{Vol}_g(M)|\eta|^2t^2}{2}}
 \frac{dt}{t^2}\right)\\
\end{eqnarray*}
\begin{eqnarray*} 
&=&\frac{(-1)^{\frac{n-1}{2}}}{(2\pi)^{n-1}}\left(\frac{(n+2)\text{Vol}_g(M)}{2\pi}\right)^{\frac{n}{2}}
\int_{T^*U} \omega_{(0,\ldots,0),(1,\ldots,1)}(y,\eta)L(\eta)d^nyd^n\eta\\
&&  \int_0^{+\infty}e^{-\frac{(n+2)\text{Vol}_g(M)|\eta|^2t^2}{2}}
 2tdt\\&=&\frac{2(-1)^{\frac{n-1}{2}}}{(2\pi)^{n-1}}\frac{\left((n+2)\text{Vol}_g(M)\right)^{\frac{n}{2}-1}}{(2\pi)^{\frac{n}{2}}}\int_{T^*U} \omega_{(0,\ldots,0),(1,\ldots,1)}(y,\eta)\frac{L(\eta)}{\vert\eta\vert^2}d^nyd^n\eta. 
\end{eqnarray*}
Therefore,
\begin{equation}\label{e:leading-term2}
 \int_{\ml{H}_{\Lambda}}\left(\frac{1}{\Lambda^n}\int_{N^{*}(\{f=0\})}\omega(y,dy,\eta,d\eta)\right)d\mu_{\Lambda}(f)=A_n\int_{\mathring{T}^*U}\frac{L(\eta)}{|\eta|^2}dy^1\wedge\ldots dy^n\wedge\omega(y,dy,\eta,d\eta)+\ml{O}(\Lambda^{-1}),
\end{equation}
where 
$$A_n:=\frac{2(-1)^{\frac{n+1}{2}}}{(2\pi)^{n-1}}\frac{\left((n+2)\text{Vol}_g(M)\right)^{\frac{n}{2}-1}}{(2\pi)^{\frac{n}{2}}}.$$
In order to conclude, it remains to compute $L(\eta)$. This combinatorial calculation will be the purpose of the next two paragraphs.

\subsection{Computation of $L(\eta)$}\label{ss:combinatorics} Recall that $L(\eta)$ is given by the sum~\eqref{e:I_0}. We start by computing explicitely each term in the sum. We fix $\sigma$ in $S_n$ and $1\leq p\leq n$. Moreover, we set
$$\tilde{T}:=(T_0;0;(T_{i,j})_{1\leq i,j\leq n})\in\mathbb{R}^{n^2+n+1},$$
which appears in the definition of $H_{p,\sigma}(\tau_0)$ -- see Remark~\ref{r:simplification}. Our first step will be to use the Fa\`a di Bruno's formula for partial derivatives~\cite{Har06} to give an explicit expression of 
$$\prod_{j\neq p}\left(\frac{\partial}{\partial T_{j,\sigma(j)}}\right)\left(e^{-\frac{2\pi^2\la \tilde{T},A_0\tilde{T}\ra}{\text{Vol}_g(M)}}\right).$$
Recall that this formula states that
$$\prod_{j\neq p}\left(\frac{\partial}{\partial T_{j,\sigma(j)}}\right)\left(e^{-\frac{2\pi^2\la \tilde{T},A_0\tilde{T}\ra}{\text{Vol}_g(M)}}\right)=e^{-\frac{2\pi^2\la \tilde{T},A_0\tilde{T}\ra}{\text{Vol}_g(M)}}\sum_{\mathfrak{A}\in\ml{P}_{\sigma,p}^{(0)}}\left(-\frac{2\pi^2}{\text{Vol}_g(M)}\right)^{|\mathfrak{A}|}\prod_{\mathfrak{a}\in \mathfrak{A}}\frac{\partial^{|\mathfrak{a}|}\la \tilde{T},A_0\tilde{T}\ra}{\prod_{j\in \mathfrak{a}}\partial T_{j, \sigma(j)}},$$
where $\ml{P}_{\sigma,p}^{(0)}$ is the set of partitions of $\{1\leq j\leq n: j\neq p\}$. We now fix some notations:
$$\ml{P}_{\sigma,p}:=\left\{\mathfrak{A}\ \text{partition of}\ \{1\leq j\leq n: j\neq p\}:\ \forall \mathfrak{a}\in\mathfrak{A},\ |\mathfrak{a}|\leq 2\ \text{and}\ \sigma(\mathfrak{a})=\mathfrak{a} \right\}.$$
\begin{rema}\label{r:zero-term}
We note that $\ml{P}_{\sigma,p}$ is empty if $\sigma(p)\neq p$. Moreover, for every $\sigma$ such that $\sigma(p)=p$, the above subset is empty if $\sigma$ is not a product of disjoint $2$-cycles. 
\end{rema}
For a given $\mathfrak{A}$ in $\ml{P}_{\sigma,p}$, we define
\begin{itemize}
 \item $\mathfrak{A}_1:=\{\mathfrak{a}\in\mathfrak{A}:|\mathfrak{a}|=1\}$;
 \item $\mathfrak{A}_2:=\{\mathfrak{a}\in\mathfrak{A}:|\mathfrak{a}|=2\ \text{and}\ \sigma_{|\mathfrak{a}}=\text{id}\}$;
 \item $\mathfrak{A}_3:=\{\mathfrak{a}\in\mathfrak{A}:|\mathfrak{a}|=2\ \text{and}\ \sigma_{|\mathfrak{a}}\neq\text{id}\}$.
\end{itemize}
Using these conventions and the specific form of the matrix $A_0$ -- see paragraph~\ref{sss:gaussian-matrix}, one finds that
$$\prod_{j\neq p}\left(\frac{\partial}{\partial T_{j,\sigma(j)}}\right)\left(e^{-\frac{2\pi^2\la T,A_0T\ra}{\text{Vol}_g(M)}}\right)_{|T=(\tau_0;0;0)}$$
$$=e^{-\frac{2\pi^2\tau_0^2}{\text{Vol}_g(M)}}\sum_{\mathfrak{A}\in\ml{P}_{\sigma,p}}\left(-\frac{4\pi^2}{\text{Vol}_g(M)(n+2)}\right)^{|\mathfrak{A}|}\left(\frac{1}{n+4}\right)^{|\mathfrak{A}_2|+|\mathfrak{A}_3|}(-\tau_0)^{|\mathfrak{A}_1|}.$$
Observe now that
$$\int_{\IR}\tau_0^{|\mathfrak{A}_1|}e^{-\frac{2\pi^2\tau_0^2}{\text{Vol}_g(M)}}d\tau_0=\left(\frac{\text{Vol}_g(M)}{2\pi}\right)^{\frac{1}{2}}(|\mathfrak{A}_1|-1)!!\left(\frac{\text{Vol}_g(M)}{4\pi^2}\right)^{\frac{|\mathfrak{A}_1|}{2}}$$
if $|\mathfrak{A}_1|\equiv 0\ (\text{mod}\ 2)$, and that it is equal to $0$ otherwise. We now set
$$C_{\sigma,p}:=\sum_{\mathfrak{A}\in\ml{P}_{\sigma,p}:|\mathfrak{A}_1|\equiv 0(\text{mod}\ 2)}\left(-\frac{1}{n+2}\right)^{|\mathfrak{A}|}(|\mathfrak{A}_1|-1)!!\left(\frac{4\pi^2}{\text{Vol}_g(M)}\right)^{|\mathfrak{A}|-\frac{|\mathfrak{A}_1|}{2}}\left(\frac{1}{n+4}\right)^{|\mathfrak{A}_2|+|\mathfrak{A}_3|},$$
whenever $\sigma(p)=p$ and $C_{\sigma,p}:=0$ otherwise. 
\begin{rema}\label{r:even}
We note that the sum is empty as soon as $n$ is even. In fact, requiring $|\mathfrak{A}_1|\equiv 0(\text{mod}\ 2)$ imposes that $n-1$ is even.
\end{rema}
From the above calculation, we deduce
\begin{equation}\label{e:I0}
 L(\eta)=\sum_{p=1}^n\sum_{\sigma\in S_n}\varepsilon(\sigma)\eta_p\eta_{\sigma(p)}\int_{\IR} H_{p,\sigma}(\tau_0)d\tau_0=\left(\frac{\text{Vol}_g(M)}{2\pi}\right)^{\frac{1}{2}}\sum_{p=1}^n\eta_p^2\sum_{\sigma\in S_n}\varepsilon(\sigma)C_{\sigma,p}.
\end{equation}
In particular, from remark~\ref{r:even}, one has $L(\eta)=0$ if $n$ is even. We also note that $\sum_{\sigma\in S_n}C_{\sigma,p}$ is independent of the choice of $p$. Thus, one finds, whenever $n$ is odd,
\begin{equation}\label{e:I0-bis}
 L(\eta)=\left(\frac{\text{Vol}_g(M)}{2\pi}\right)^{\frac{1}{2}}|\eta|^2\sum_{\sigma\in S_n}\varepsilon(\sigma)C_{\sigma,p}.
\end{equation}
From~\eqref{e:leading-term2}, one finally obtains
\begin{equation}\label{e:leading-term3}
 \int_{\ml{H}_{\Lambda}}\left(\frac{1}{\Lambda^n}\int_{N^{*}(\{f=0\})}\omega(y,\eta,dy,d\eta)\right)d\mu_{\Lambda}(f)
\end{equation}
$$ =\left(\frac{\text{Vol}_g(M)}{2\pi}\right)^{\frac{1}{2}}A_n\left(\sum_{\sigma\in S_n}\varepsilon(\sigma)C_{\sigma,p}\right)\int_{\mathring{T}^*U}dy^1\wedge\ldots dy^n\wedge\omega(y,dy,\eta,d\eta)+\ml{O}(\Lambda^{-1}),$$
where $A_n$ was defined in~\eqref{e:leading-term2}.

\subsection{Computation of $\sum_{\sigma\in S_n}C_{\sigma,p}$}\label{ss:combinatorics2}
From~\eqref{e:leading-term3}, it remains to compute $\sum_{\sigma\in S_n}C_{\sigma,p}$ which is independent of the choice of $p$. For that purpose, we fix $0\leq k\leq\frac{n-1}{2}$ and we suppose that $\sigma$ is the product of $k$ disjoint $2$-cycles $\sigma_1,\ldots,\sigma_k$. Then, we compute the value of $C_{\sigma,p}$ in this case. We note that in this case $|\mathfrak{A}_3|$ is necessarily equal to $k$, and thus
$$C_{\sigma,p}=\left(\frac{1}{n+4}\right)^k\sum_{\mathfrak{A}\in\ml{P}_{\sigma,p}:|\mathfrak{A}_1|\equiv 0(\text{mod}\ 2)}\left(-\frac{1}{n+2}\right)^{|\mathfrak{A}|}(|\mathfrak{A}_1|-1)!!\left(\frac{4\pi^2}{\text{Vol}_g(M)}\right)^{|\mathfrak{A}|-\frac{|\mathfrak{A}_1|}{2}}\left(\frac{1}{n+4}\right)^{|\mathfrak{A}_2|}.$$
We also note the following useful relations
$$n-1=|\ml{A}_1|+2|\ml{A}_2|+2|\ml{A}_3|\ \text{and}\ \ml{A}=|\ml{A}_1|+|\ml{A}_2|+|\ml{A}_3|.$$
Recall now that the number of permutations in $\{1,2,\ldots,2l\}$ with $l$ cycles of length $2$ is equal to $\frac{(2l)!}{l! 2^l}$ -- see Th.~$6.9$ in~\cite{Bo06}. From this observation, we deduce that
$$C_{\sigma,p}=\left(\frac{4\pi^2}{(n+2)^2\text{Vol}_g(M)}\right)^{\frac{n-1}{2}}\left(-\frac{n+2}{n+4}\right)^k\sum_{l=0}^{\frac{n-1}{2}-k}\frac{(2l)!}{l! 2^l}C_{n-1-2k}^{2l}(n-2(k+l+1))!!\left(-\frac{n+2}{n+4}\right)^{l},$$
and thus
$$C_{\sigma,p}=\left(\frac{2\pi^2}{2(n+2)^2\text{Vol}_g(M)}\right)^{\frac{n-1}{2}}\left(-\frac{2(n+2)}{n+4}\right)^k\sum_{l=0}^{\frac{n-1}{2}-k}\frac{(n-1-2k)!}{l!\left(\frac{n-1}{2}-(k+l)\right)!}\left(-\frac{n+2}{n+4}\right)^{l}.$$
Thanks to our formula for $C_{\sigma,p}$ when $\sigma$ is a product of $k$ disjoint $2$-cycles (thus $\varepsilon(\sigma)=(-1)^k$), we deduce that
$$\sum_{\sigma\in S_n}\varepsilon(\sigma)C_{\sigma,p}=\left(\frac{2\pi^2}{(n+2)^2\text{Vol}_g(M)}\right)^{\frac{n-1}{2}}\sum_{k=0}^{\frac{n-1}{2}}C_{n-1}^{2k}\frac{(2k)!}{k! 2^k}2^k\sum_{l=0}^{\frac{n-1}{2}-k}(-1)^l\frac{(n-1-2k)!}{l!2^l\left(\frac{n-1}{2}-(k+l)\right)!}\left(\frac{n+2}{n+4}\right)^{l+k}.$$
After simplification, it gives
$$\sum_{\sigma\in S_n}\varepsilon(\sigma)C_{\sigma,p}=(n-1)!\left(\frac{2\pi^2}{(n+2)^2\text{Vol}_g(M)}\right)^{\frac{n-1}{2}}\sum_{k=0}^{\frac{n-1}{2}}\left(\frac{n+2}{n+4}\right)^k\sum_{l=0}^{\frac{n-1}{2}-k}\frac{\left(-\frac{n+2}{n+4}\right)^{l}}{k!l!\left(\frac{n-1}{2}-(k+l)\right)!},$$
and thus
$$\sum_{\sigma\in S_n}\varepsilon(\sigma)C_{\sigma,p}=(n-1)!\left(\frac{2\pi^2}{(n+2)^2\text{Vol}_g(M)}\right)^{\frac{n-1}{2}}\sum_{k=0}^{\frac{n-1}{2}}\frac{1}{k!\left(\frac{n-1}{2}-k\right)!}\left(\frac{n+2}{(n+4)}\right)^k\left(1-\frac{n+2}{n+4}\right)^{\frac{n-1}{2}-k}.$$
Finally, we obtain
\begin{equation}\label{e:combi-cstt}\sum_{\sigma\in S_n}\varepsilon(\sigma)C_{\sigma,p}=\frac{(n-1)!}{\left(\frac{n-1}{2}\right)!}\left(\frac{2\pi^2}{(n+2)^2\text{Vol}_g(M)}\right)^{\frac{n-1}{2}}.\end{equation}

\subsection{The conclusion}\label{ss:concl} Regarding~\eqref{e:leading-term2},~\eqref{e:leading-term3} and~\eqref{e:combi-cstt}, we define
$$B_n:=\frac{2(-1)^{\frac{n+1}{2}}}{(2\pi)^{n-1}}\frac{(n-1)!}{\left(\frac{n-1}{2}\right)!}\frac{\pi^{\frac{n-3}{2}}}{2(n+2)^{\frac{n}{2}}}=\frac{(-1)^{\frac{n+1}{2}}}{\pi\text{Vol}(\mathbb{S}^{n-1})}\frac{2}{(n+2)^{\frac{n}{2}}}.$$
if $n\equiv 1\ (\text{mod}\ 2)$ and $B_n=0$ otherwise. Finally, still thanks to~\eqref{e:leading-term3}, we conclude
$$\int_{\ml{H}_{\Lambda}}\left(\frac{1}{\Lambda^n}\int_{N^{*}(\{f=0\})}\omega(y,dy,\eta,d\eta)\right)d\mu_{\Lambda}(f)=B_n\int_{\mathring{T}^*U}\omega_{(0,\ldots,0),(1,\ldots,1)}(y,\eta)d^nyd^n\eta +\ml{O}(\Lambda^{-1}),$$
which is exactly the content of the second part of Theorem~\ref{p:expectation-local-chart}.

\section{Formal derivation of the main result via Berezin integration}
\label{s:berezin}

The derivation of the main theorem (namely Theo.~\ref{p:expectation-local-chart}) is slightly technical due to the fact that we have to justify carefully the smallness of several terms and the inversion of several integrals but also due to some complicated combinatorics. The purpose of this last section is to show how the combinatorial aspects of the proof can be treated more easily at the expense of a little bit of abstraction by making use of the so-called Berezin integral~\cite{Bz87, GuSt99, Dis11, Tao13}. 

In this section, we will not pay too much attention to the inversion of integrals and to the size of the remainder terms, and we will mostly focus on the computation of the leading term of the asymptotics. Similar arguments as the ones presented in the previous section would in fact provide a rigorous treatment of the calculation developped in the present section. The reason for presenting this alternative approach in a formal (but non completely rigorous manner) is that we believe that it can be helpful (at least for the reader familiar with Berezin integration) to understand and follow the main lines of our proof, especially its combinatorial aspects. It is also plausible that this kind of intuitive calculation could be used for other related questions.

\subsection{A brief reminder on Berezin integration} Berezin integration is a convenient formalism that allows us to represent the wedge product appearing 
in our formulas as oscillatory integrals. This formalism was introduced by Berezin~\cite{Bz87} and it is a more or less standard tool in quantum field 
theory where one aims at computing integrals over both bosonic (even) and fermionic (odd) variables. Basic introductions to this formalism are described by 
Disertori~\cite{Dis11} or by Tao~\cite{Tao13} with a view towards random matrix theory, a very good introduction can be found in the fantastic book of Takhtajan~\cite[Chapter 7]{Takhtajan} -- see also the book of Guillemin and Sternberg~\cite{GuSt99} 
for considerations on Fourier transforms in this context. Following closely the presentation of these references, we give here a brief (and somewhat simplified) overview on 
it that should be sufficient for the purpose of our formal calculation.

\subsubsection{Odd and even variables} Let $V$ be a finite dimensional real vector space. One can define the so called exterior 
algebra $\Lambda V:=\bigoplus_{k=0}^{+\infty}\Lambda^kV.$ For simplicity of notations, we will denote by $\Phi_1\Phi_2:=\Phi_1\wedge\Phi_2$ 
the product on this algebra. This algebra can be splitted in two parts. One part is made of the so-called even (or bosonic) elements 
$\Lambda V_{\text{even}}:=\bigoplus_{k=0}^{+\infty}\Lambda^{2k}V,$ while the other is made of the so-called odd (or fermionic) elements 
$\Lambda V_{\text{odd}}:=\bigoplus_{k=0}^{+\infty}\Lambda^{2k+1}V.$ Any odd element $\Pi$ commutes with every even element $p$, while it 
anticommutes with odd elements. From this observation, one can deduce that any element $\Phi$ in $\oplus_{k=1}^{+\infty}\Lambda^kV$ is nilpotent. One can verify that any element $\Phi$ of $\Lambda V$ can be exponentiated as follows:
$$\exp(\Phi):=\sum_{k=0}^{+\infty}\frac{\Phi^k}{k!}.$$
Important properties of the exponential are that $\exp(\Pi)=1+\Pi$ for any odd element $\Pi$ and that, for any even element $p$,
\begin{equation}\label{e:exp-product-berezin}\exp(p+\Phi)=\exp(p)\exp(\Phi)=\exp(\Phi)\exp(p).\end{equation}

\subsubsection{Integration of functions of odd and even variables} 

One of the aim of Berezin integration is to integrate function whose variables are in $\Lambda V$. In the following, we just need a simple version of 
this formalism. Namely, we fix two nonnegative integers $d_1$ and $d_2$, and we consider the case where $V=\IR$. We are only aiming at the integration 
of functions depending on the even variables $(p_1,\ldots,p_{d_1})\in(\Lambda^0V)^{d_1}\simeq\IR^{d_1}$ and on the odd variables 
$(\Pi_1,\ldots, \Pi_{d_2})\in(\Lambda^{1}V)^{d_2}$.
Said differently, we consider the algebra $C^\infty(\mathbb{R}^{d_1})$ whose elements
are smooth functions $f(p_1,\dots,p_{d_1})$ of the even variables $(p_1,\ldots,p_{d_1})$ then we consider the Grassmann
algebra $C^\infty(\mathbb{R}^{d_1})[\Pi_1,\dots,\Pi_{d_2}]$ which is a polynomial algebra generated
by $(\Pi_1,\ldots, \Pi_{d_2})$ satisfying the relations $\Pi_i\Pi_j+\Pi_j\Pi_i=0,\forall (i,j)\in \{1,\dots,d_2\}^2$~see \cite[Chapter 7]{Takhtajan}.
More precisely, our functions will be of the form
$$f(p,\Pi):= f_0(p)+\sum_{k\geq 1}\sum_{i_1,\ldots, i_k}f_{i_1,\ldots,i_k}(p)\Pi_{i_1}\ldots\Pi_{i_k},$$
where $f_*$ are functions defined on $\IR^{d_1}$ and with values in $\Lambda V$. Given such a function $f$, we will say that it is defined on $\IR^{(d_1|d_2)}$ and its Berezin integral is defined by the fundamental formula~:
\begin{equation}
\int_{\IR^{(d_1|d_2)}}f(p,\Pi)dpd\Pi:=\int_{\IR^{d_1}}\partial_{\Pi_1}\ldots\partial_{\Pi_{d_2}}fdp_1dp_2\ldots dp_{d_1}.
\end{equation}
We refer the reader to~\cite{Dis11} for a more general presentation of the Berezin integral, for basic properties on this integration procedure 
and for a review of simple examples. 
\begin{rema} Following~\cite{GuSt99} -- Chapter~$7$, one can also define a notion of ``super Fourier transform'', as follows:
$$\forall q\in(\Lambda^0\IR)^{d_1},\ \forall Q\in(\Lambda^1\IR)^{d_2},\ \ml{F}(f)(q,Q):=\int_{\IR^{(d_1|d_2)}}\exp\left(-2i\pi (q.p+Q.\Pi)\right)f(p,\Pi)dpd\Pi.$$
The prefix ``super'' is here to emphasize that we consider functions depending both on even (bosonic) and odd (fermionic) variables. We observe that $q.p+Q.\Pi$ is an even element. 
\end{rema}
Note that any one form $Q$ can be written as follows:
\begin{equation}\label{e:odd-Fourier}Q=-\frac{1}{2i\pi}\int_{\IR^{(0|1)}}\exp(-2i\pi \Pi Q)d\Pi,\end{equation}
and more generally any product of $d_2$ one forms $Q_1,\ldots, Q_{d_2}$ as
\begin{equation}\label{e:odd-Fourier2}Q_1\ldots Q_{d_2}=\frac{1}{(-2i\pi)^{d_2}}\int_{\IR^{(0|d_2)}}\exp\left(-2i\pi \sum_{j=1}^{d_2}\Pi_jQ_j\right)d\Pi_1\ldots d\Pi_{d_2}.\end{equation}
This formula gives us a representation of products of odd elements as ``oscillatory'' integrals. This observation is the first key point which is at the heart of the calculation below.

\subsubsection{Representation of integration currents as oscillatory integrals over even and odd variables.}
Using the conventions of Appendix~\ref{a:currents}, we consider a submanifold $S$ in some larger manifold $X$
given by equations $G=(g_i)_{1\leqslant i\leqslant d}=0$. Then
it is a well known fact that the delta function $\delta^d_0\circ G=G^*(\delta_0^d)$ supported
on $S$ can be represented as an oscillatory integral.
So a natural question is how to represent the
integration current $[S]=[\{G=0\}]$ as an oscillatory integral ? 
The answer involves the introduction of odd variables and the use of the Berezin integral. 
\begin{prop}\label{Berezinoscillatoryprop}
Let $X$ be a smooth oriented manifold with orientation $[\omega_1]$, 
\begin{eqnarray*}
G: x\in X\rightarrow y=(g_1(x),\ldots,g_d(x))\in\IR^d
\end{eqnarray*}
a smooth function such that the differentials $d_xg_i$ are linearly independent 
for every $x\in X$ satisfying $g_1(x)=\dots=g_d(x)=0$. Set $S$ to be the submanifold in $X$ defined by the regular system of equations
$\{g_1=\dots=g_d=0\}$ and oriented by $[\omega_2]$. If $dg_1\wedge \dots\wedge dg_d$ has
orientation compatible  
with the pair of orientations $[\omega_1],[\omega_2]$
then the integration current $[S]$ is represented by the oscillatory integral
\begin{equation}
[S]=G^*\left(\frac{1}{(-2i\pi)^d}\int_{\mathbb{R}^{(d\vert d)}} e^{-2i\pi\left(\sum_{j=1}^dp_jy^j+\Pi_jdy^j\right)} dp_1\dots dp_d
d\Pi_1\dots d\Pi_d\right).
\end{equation}
\end{prop}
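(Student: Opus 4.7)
The plan is to recognize the inner oscillatory integral as an exponential representation of the distributional form $\delta_0^d(y)\, dy^1 \wedge \cdots \wedge dy^d$ on $\mathbb{R}^d$, and then to invoke the pull-back by $G$, using the hypothesis that $G$ is a submersion along $\{G = 0\}$ so that the pull-back of the Dirac distribution is well defined.

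More precisely, I would first carry out the computation on the target $\mathbb{R}^d$. Integrating out the odd variables $\Pi_1,\ldots,\Pi_d$ using \eqref{e:odd-Fourier2} gives
\begin{equation*}
\frac{1}{(-2i\pi)^d}\int_{\mathbb{R}^{(0|d)}} e^{-2i\pi \sum_j \Pi_j dy^j}\, d\Pi_1\cdots d\Pi_d = dy^1 \wedge \cdots \wedge dy^d,
\end{equation*}
while integrating out the even variables $p_1,\ldots,p_d$ yields the standard Fourier representation of the Dirac mass,
\begin{equation*}
\int_{\mathbb{R}^d} e^{-2i\pi \sum_j p_j y^j}\, dp_1\cdots dp_d = \delta_0^d(y).
\end{equation*}
Because of the commutation rule \eqref{e:exp-product-berezin} between even and odd variables, the combined super-Fourier integral factors into the product of these two pieces, so the bracket on the right-hand side equals, as a current on $\mathbb{R}^d$, the tensor product $\delta_0^d(y)\, dy^1\wedge \cdots \wedge dy^d$.

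Next I would apply the pull-back operation $G^*$. By Corollary \ref{lemmadeltasubmanifolds} (the same tool used in the proof of Proposition \ref{p:current-local-chart}), the hypothesis that $d_x g_1,\ldots,d_x g_d$ are linearly independent at every point of $S = \{G = 0\}$ guarantees that $G^*\bigl(\delta_0^d(y)\, dy^1\wedge \cdots \wedge dy^d\bigr)$ is well defined as a current on $X$, and that it coincides (up to orientation) with the integration current $[S]$. The compatibility assumption on the orientations $[\omega_1]$, $[\omega_2]$ and $dg_1\wedge \cdots \wedge dg_d$ ensures that the sign is $+1$, so we indeed recover $[S]$.

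The main obstacle is the rigorous justification that the super-Fourier representation commutes with the pull-back $G^*$, which is a microlocal statement: one needs the wavefront set of $\delta_0^d(y)\, dy^1\wedge \cdots \wedge dy^d$ (namely the conormal to $\{0\}$) to be transverse to the differential of $G$, which is precisely the submersion hypothesis. This is handled exactly as in Appendix \ref{intertwiningsectionappendix} and Appendix \ref{a:currents}, and the sequential continuity of pull-back on the space $\mathcal{D}'_\Gamma$ of distributions with controlled wavefront set then allows one to move $G^*$ inside the oscillatory integral. Once this microlocal point is granted, the identity stated in the proposition follows from the two elementary calculations above.
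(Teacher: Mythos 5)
Your proposal is correct and follows essentially the same route as the paper's proof: the paper starts from $[S]=G^*\left(\delta_0^d\, dy^1\wedge\cdots\wedge dy^d\right)$ (Corollary~\ref{lemmadeltasubmanifolds}) and substitutes the classical Fourier representation of $\delta_0^d$ together with the Berezin representation~\eqref{e:odd-Fourier2} of $dy^1\wedge\cdots\wedge dy^d$, which is exactly your computation run in the opposite direction (evaluating the super-Fourier integral on $\IR^d$ first and then pulling back). The microlocal point you flag is already covered by Proposition~\ref{p:Dirac} in Appendix~\ref{a:currents}, so nothing is missing.
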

\begin{proof}
The proof follows from a simple calculation, we start from the representation formula for the current of integration given in appendix~\ref{a:currents}~:
$$[S]=G^*\left(\delta_0^d dy^1\wedge\ldots \wedge dy^d\right).$$
Then, using the classical representation of the delta function by oscillatory
integral, one has
$$G^*\delta_0^d G^*\left(dy^1\wedge\ldots \wedge dy^d\right)=G^*\left(\int_{\mathbb{R}^{d}} e^{-2i\pi\sum_{j=1}^dp_jy^j}dp_1\ldots dp_j\right) G^*\left( dy^1\wedge\ldots \wedge dy^d\right)$$
and, using the representation~\eqref{e:odd-Fourier2} of the exterior product $dy^1\wedge \dots\wedge dy^d$ in terms of the Berezin integral
we end up with the expected formula.
\end{proof}

In the case where the ambient manifold $X$ is $\IR^D$ (e.g. in a local chart), we can in fact write (at least formally)
$$[S]=\frac{1}{(-2i\pi)^d}\int_{\mathbb{R}^{(d\vert d)}} e^{-2i\pi\left(\sum_{j=1}^dp_jg_j+\Pi_jdg_j\right)} dp_1\dots dp_d
d\Pi_1\dots d\Pi_d$$
This oscillatory integral formula is already known in the literature since it appears
in the work of Frenkel, Losev and Nekrasov on instantonic quantum field theory~\cite[p.~23]{LosevI}.
\subsubsection{Fourier transform of Gaussian integrals} We observe that the following holds:
\begin{lemm}\label{l:formalseriesgaussian}
Let $N\geq 1$ and $A$ be a symmetric, positive definite $N\times N$ matrix. Let $p=(p_1,\ldots, p_N)$ be an element in $\left(\Lambda V_{\text{even}}\right)^N$. Then, one has
\begin{equation}\label{fouriergaussianformal}
e^{-\frac{\langle p\vert A^{-1}\vert p\rangle}{2}}=\frac{\det(A)^{\frac{1}{2}}}{\left(2\pi\right)^{\frac{N}{2}}}
\int_{\mathbb{R}^N} e^{i\langle c,p\rangle} e^{-\frac{\langle c\vert A\vert c\rangle}{2}}d^Nc,
\end{equation}
where $\la c,p\ra=\sum_{j=1}^N c_jp_j$, and $\langle p\vert A^{-1}\vert p\rangle=\sum_{i,j}p_ip_j(A^{-1})_{ij}$.
\end{lemm}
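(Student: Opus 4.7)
The plan is to reduce to the classical case $p\in\IR^N$ and then extend formally by expanding along the nilpotent part of $p$.

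First, for $p\in\IR^N$ the identity \eqref{fouriergaussianformal} is the standard Fourier transform of a centered Gaussian density with covariance $A$ on $\IR^N$: it follows by completing the square in the exponent on the right-hand side via the (contour-shifted) substitution $c \mapsto c - iA^{-1}p$, which reduces the integral to the fundamental evaluation $\int_{\IR^N} e^{-\la c|A|c\ra/2}d^Nc = (2\pi)^{N/2}/\det(A)^{1/2}$.

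Second, to handle general $p\in(\Lambda V_{\mathrm{even}})^N$, I would decompose $p_j = p_j^{(0)} + q_j$ where $p_j^{(0)}\in\IR$ is the scalar part and $q_j\in \bigoplus_{k\geq 1}\Lambda^{2k}V$ is the purely nilpotent even part. Since $V$ is finite dimensional, each $q_j$ is nilpotent, and since all $q_j$ are even, they commute with each other and with the $p_j^{(0)}$. Using \eqref{e:exp-product-berezin} to factor the exponential on the left,
\begin{equation*}
e^{-\frac{\la p|A^{-1}|p\ra}{2}} = e^{-\frac{\la p^{(0)}|A^{-1}|p^{(0)}\ra}{2}}\cdot e^{-\la p^{(0)}|A^{-1}|q\ra\, -\, \frac{\la q|A^{-1}|q\ra}{2}},
\end{equation*}
the second factor expands as a \emph{finite} power series in the $q_j$. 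Likewise the integrand on the right satisfies $e^{i\la c,p\ra} = e^{i\la c,p^{(0)}\ra}\sum_{k\geq 0}(i\la c,q\ra)^k/k!$ with the sum terminating. Thus both sides of \eqref{fouriergaussianformal} become polynomials in the variables $q_1,\ldots,q_N$ whose coefficients are smooth (in fact real-analytic) functions of $p^{(0)}\in\IR^N$.

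Third, I would compare coefficients monomial by monomial. The coefficient of $q^\alpha = q_1^{\alpha_1}\cdots q_N^{\alpha_N}$ on the left-hand side equals $\frac{1}{\alpha!}\partial_{p^{(0)}}^\alpha\bigl[e^{-\la p^{(0)}|A^{-1}|p^{(0)}\ra/2}\bigr]$, while on the right-hand side it equals the corresponding derivative of $\int e^{i\la c,p^{(0)}\ra}e^{-\la c|A|c\ra/2}d^Nc$, the exchange of differentiation and integration being trivially justified by Gaussian decay in $c$. Equality of all these coefficients is then immediate from the scalar identity of the first step, differentiated in $p^{(0)}$.

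The main obstacle, such as it is, is purely organizational: carefully invoking commutativity of even elements and nilpotency of the $q_j$ so that all manipulations of exponentials and power series are legitimate. There is no analytic subtlety, because $\Lambda V$ is finite dimensional and every formal series in the nilpotent variables truncates.
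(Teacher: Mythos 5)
Your argument is correct, and at its core it is the same reduction as the paper's: both proofs rest on the classical scalar Fourier transform of the Gaussian and a coefficient-matching argument. The execution differs, though. The paper treats both sides as formal power series in commuting indeterminates $X_1,\dots,X_N$, defines the right-hand integral term by term (each moment $\int c_{i_1}\cdots c_{i_k}e^{-\la c|A|c\ra/2}d^Nc$ being evaluated by Wick's lemma), and concludes by uniqueness of the Taylor expansion at $0$; the substitution of actual even elements of $\Lambda V$ into the resulting formal identity is left implicit. You instead prove the scalar case directly and then propagate it by writing $p_j=p_j^{(0)}+q_j$ with scalar body and nilpotent even soul, expanding both sides as finite polynomials in the $q_j$ whose coefficients are $\frac{1}{\alpha!}\partial^{\alpha}_{p^{(0)}}$ of the scalar identity. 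Your route buys an explicit justification of exactly the substitution that is used later in the paper (where the arguments are a mix of genuine scalars $p_0,tp$ and nilpotent elements $\Pi_j\,dy^k$), at the cost of a bit more bookkeeping with the factorization~\eqref{e:exp-product-berezin} and the exchange of the (finite) $q$-expansion with the Gaussian integral, which you correctly note is harmless; the paper's formal-series formulation is shorter and stresses the purely algebraic nature of the statement. Either way the lemma is established.
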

This lemma will be the second key point in the computation of the asymptotics via Berezin integration.
\begin{proof} Let $\mathbb{C}[[X_1,\ldots, X_N]]$ be the commutative algebra of formal power series in $N$ indeterminates $(X_1,\dots,X_N)$. First, we note that
\begin{equation}\label{e:fouriergaussianformal}
e^{-\frac{\langle X\vert A^{-1}\vert X\rangle}{2}}\quad \text{and}\quad \frac{\det(A)^{\frac{1}{2}}}{\left(2\pi\right)^{\frac{N}{2}}}
\int_{\mathbb{R}^N} e^{i\langle c,X\rangle} e^{-\frac{\langle c\vert A\vert c\rangle}{2}}d^Nc\end{equation}
are well defined as formal power series since the integral on the right hand side is defined by expanding the exponential
function $e^{i\langle c,X\rangle}$ in powers of $X$ under the integral sign where, for each $(i_1,\ldots,i_k)$, the term $\int_{\mathbb{R}^N} (c_{i_1}\ldots c_{i_k}) e^{-\frac{\langle c\vert A\vert c\rangle}{2}}d^Nc $
is a convergent integral that can be explicitely calculated by the Wick lemma.
Then the equality between the two formal series in~\eqref{e:fouriergaussianformal} follows from the classical
result on the Fourier transform of the Gaussian measure replacing the formal indeterminate $X$ by $y\in\mathbb{R}^N$ and by uniqueness of the Taylor series expansion in $y$ at $y=0$.
\end{proof}

\begin{rema}
Conceptually, we note that this proof shows that 
we can define the Fourier transform of a Gaussian measure
in a purely algebraic language. 
\end{rema}

\subsection{Asymptotic expansion in a local chart}

We fix a local geodesic normal coordinate chart $(U,\phi)$ and $\omega(y,dy,\eta,d\eta)$ an element in $\mathcal{D}_n(\mathring{T}^*U)$. Recall that we want to compute the expectation of 
$$J(f,\omega,U):=\int_{N^{*}(\{f=0\})}\omega(y,dy,\eta,d\eta)$$
with respect to the Gaussian measure $d\mu_{\Lambda}$. We can write formally
$$J(f,\omega,U)=\int_{\mathring{T}^*U\times\mathbb{R}^*}\delta_0^{n+1}\left(f,\frac{td_yf}{\Lambda}-\eta\right)df\wedge\bigwedge_{j=1}^nd\left(\frac{t\partial_{y^j} f}{\Lambda}-\eta_j\right)\wedge \omega(y,dy,\eta,d\eta).$$
Recall that one of the reason why the calculation of the expectation against the Gaussian measure is possible is the fact that the Dirac distribution can be represented by an oscillatory integral over even variables, i.e.
$$\delta_0^{n+1}\left(f,\frac{td_yf}{\Lambda}-\eta\right)=\int_{\mathbb{R}^{(n+1|0)}}\exp\left(-2i\pi\left(p_0f+p.\left(\frac{td_yf}{\Lambda}-\eta\right)\right)\right)dp_0dp.$$
Then, some of the (combinatorial) difficulties in the proof come from the fact that the wedge product has not at first sight the same simple structure. 
By Proposition \ref{Berezinoscillatoryprop}, we see that the advantage of the Berezin formalism described above is that it allows us to write the wedge 
product as an oscillatory integral over odd variables in $\mathbb{R}^{(0|n)}$. Before writing this oscillatory 
representation -- see equation~\eqref{e:oscillatory-Berezin} below, we proceed to a few simplifications that will make the calculation slightly simpler. 
More precisely, we expand the wedge product, and we obtain that
$$J(f,\omega,U)=\Lambda\sum_{r,s=1}^n(-1)^r\int_{\mathring{T}^*U\times\IR^*}\frac{\eta_r\eta_s}{t^2} dy^s\wedge W^{r}(f,t,y,dy,\eta, d\eta)\wedge \omega(y,dy,\eta,d\eta)\wedge dt,$$
where
$$W^{r}(f,t,y,dy,\eta, d\eta):=\delta_0^{n+1}\left(f,\frac{td_yf}{\Lambda}-\eta\right)\bigwedge_{j\neq r}d\left(\frac{t\partial_{y^j} f}{\Lambda}-\eta_j\right).$$
For every $t$ in $\IR^*$ and every $f$ in $\ml{H}_{\Lambda}$, this defines an element in $\ml{D}_{n+1}'(\mathring{T}^*U)$. The main contribution in this integral will come from the terms involving derivatives of $f$ -- this is exactly the content of Prop.~\ref{l:L1} above. Thus, one can set
$$W^r_0(f,t,y,dy,\eta):=\delta_0^{n+1}\left(f,\frac{td_yf}{\Lambda}-\eta\right)\bigwedge_{j\neq r}d\left(\frac{\partial_{y^j} f}{\Lambda^2}\right),$$
and then one finds
\begin{equation}\label{e:berezin-step-1}J(f,\omega,U)=\Lambda^n\sum_{r,s=1}^n(-1)^r\int_{\mathring{T}^*U\times\IR^*}t^{n-3}\eta_r\eta_s dy^s\wedge W^r_0(f,t,y,dy,\eta)\wedge \omega(y,dy,\eta,d\eta)\wedge dt+\ml{O}(\Lambda^{n-1}).\end{equation}
In order to get the conclusion, everything boils down to computing the expectation of $W^r_0(f,t,y,dy,\eta)$ with respect to $d\mu_{\Lambda}$ for a given $(y,\eta,t)$ in $\mathring{T}^*U\times\IR^*$. In order to alleviate notations, we will just write $W^r_0(f)$ and re-establish the dependence in $(y,\eta,t)$ after. Using the representation~\eqref{e:odd-Fourier2} of $1$-forms via Berezin integrals, one can write the following ``oscillatory'' integral:
\begin{equation}\label{e:oscillatory-Berezin}W^r_0(f)=\frac{(-1)^{\frac{n-1}{2}}}{(2\pi)^{n-1}}\int_{\mathbb{R}^{(n+1|n-1)}}\exp\left(-2i\pi\left(p_0f+p.\left(\frac{td_yf}{\Lambda}-\eta\right)+\sum_{j\neq r, k}\frac{\partial^2_{y^jy^k}f}{\Lambda^2}\Pi_jdy^k\right)\right)dp_0dp d\Pi.\end{equation}

\begin{rema}
The fact that $W_0^r(f)$ has a simple representation as an oscillatory integral is the \emph{central observation} that makes our formal calculation much simpler than the detailed combinatorial proof we gave before.
Note that the element $\Pi_jdy^k$ is even since it is the product of two odd elements and thus, we are in a situation where we can apply lemma~\ref{l:formalseriesgaussian}. As was already mentionned, we do not pay too much attention to the inversion of integrals and to the size of the remainders, and we mostly focus on the conceptual aspects that make our proof works.
\end{rema}

Recall that we aim at computing the expectation of $W_0^r(f)$ with respect to the Gaussian measure $d\mu_{\Lambda}$. For that purpose, we fix $(p_0,p,\Pi)$ in $\mathbb{R}^{(n+1|n-1)}$, and we use lemma~\ref{l:formalseriesgaussian} to write
$$\int_{\ml{H}_{\Lambda}}\exp\left(-2i\pi\left(p_0f+p.\left(\frac{td_yf}{\Lambda}\right)+\sum_{j\neq r, k}\frac{\partial^2_{y^jy^k}f}{\Lambda^2}\Pi_jdy^k\right)\right)d\mu_{\Lambda}(f)$$
\begin{equation}\label{e:gaussian-Berezin}=\exp\left(-\frac{2\pi^2}{\text{Vol}_g(M)}\la (p_0,tp,(\Pi_jdy^k)_{j\neq r,k}),A_0(p_0,tp,(\Pi_jdy^k)_{j\neq r,k})\ra\right)(1+\ml{O}(\Lambda^{-1})),\end{equation}
where the covariance matrix $A_0$ was defined in proposition~\ref{p:pushforwardGaussmeasure}. We can now expand 
$$\la (p_0,tp,(\Pi_jdy^k)_{j\neq r,k}),A_0(p_0,tp,(\Pi_jdy^k)_{j\neq r,k})\ra =  \frac{1}{n+2}t^2|p|^2+p_0^2-\frac{2}{n+2}p_0\sum_{j\neq r}\Pi_jdy^j$$
  $$ +  \frac{2}{(n+2)(n+4)}\sum_{j,k\neq r}(\Pi_jdy^k\Pi_kdy^j+\Pi_jdy^j\Pi_kdy^k).$$
Thanks to the anti-commutation rules for odd variables, one finds that the last term in this sum is in fact equal to $0$. Recall that we are in fact interested in computing $W_0^r(f)$ which is defined by~\eqref{e:oscillatory-Berezin}. In particular, we have to multiply this Gaussian term by $e^{2i\pi p.\eta}$ and then integrate over the variables $(p_0,p,\Pi)$ in $\IR^{(n+1|n-1)}$. After integrating over the even variables $(p_0,p)$, one finds that the following equalities hold:
\begin{equation}\label{e:berezin-tau}\int_{\IR^{(n|0)}}e^{2i\pi p.\eta}\exp\left(-\frac{2\pi^2t^2|p|^2}{(n+2)\text{Vol}_g(M)}\right)dp=\left(\frac{\text{Vol}_g(M)(n+2)}{2\pi t^2}\right)^{\frac{n}{2}}e^{-\frac{(n+2)\text{Vol}_g(M)|\eta|^2}{2t^2}},\end{equation}
and, by lemma~\ref{l:formalseriesgaussian},
\begin{equation}\label{e:berezin-tau0}\int_{\IR^{(1|0)}}\exp\left(-\frac{2\pi^2}{\text{Vol}_g(M)}\left(p_0^2-\frac{2}{n+2}p_0\sum_{j\neq r}\Pi_jdy^j\right)\right)dp_0\hspace{3cm}\end{equation}
$$\hspace{3cm}=\left(\frac{\text{Vol}_g(M)}{2\pi}\right)^{\frac{1}{2}}\exp\left(\frac{2\pi^2}{\text{Vol}_g(M)(n+2)^2}(\sum_{j\neq r}\Pi_jdy^j)^2\right).$$

It now remains to integrate~\eqref{e:berezin-tau0} with respect to the odd variables $(\Pi_j)_{j\neq r}$. Regarding the definition of the Berezin integral, we have to compute that
$$\frac{\partial}{\partial\Pi_1}\ldots\frac{\hat{\partial}}{\partial\Pi_r}\ldots\frac{\partial}{\partial\Pi_n}\exp\left(\frac{2\pi^2}{\text{Vol}_g(M)(n+2)^2}(\sum_{j\neq r}\Pi_jdy^j)^2\right).$$
One can verify that \emph{if $n$ is even, this quantity vanishes, and then the total expectation vanishes}. In the case where $n$ is odd, one finds from the multinomial Newton formula that
$$\frac{\partial}{\partial\Pi_1}\ldots\frac{\hat{\partial}}{\partial\Pi_r}\ldots\frac{\partial}{\partial\Pi_n}\exp\left(\frac{2\pi^2}{\text{Vol}_g(M)(n+2)^2}(\sum_{j\neq r}\Pi_jdy^j)^2\right)\hspace{3cm}$$
$$\hspace{3cm}=\frac{(n-1)!}{\left(\frac{n-1}{2}\right)!}\left(\frac{2\pi^2}{\text{Vol}_g(M)(n+2)^2}\right)^{\frac{n-1}{2}}dy^1\wedge\ldots \hat{dy^r}\wedge\ldots dy^n.$$
Combining this to~\eqref{e:berezin-tau0}, we obtain
\begin{equation}\label{e:berezin-tau0-Pi}\int_{\IR^{(1|n-1)}}\exp\left(-\frac{2\pi^2}{\text{Vol}_g(M)}\left(p_0^2-\frac{2}{n+2}p_0\sum_{j\neq r}\Pi_jdy^j\right)\right)dp_0d\Pi\hspace{3cm}\end{equation}
$$\hspace{3cm}=\left(\frac{\text{Vol}_g(M)}{2\pi}\right)^{\frac{1}{2}}\frac{(n-1)!}{\left(\frac{n-1}{2}\right)!}\left(\frac{2\pi^2}{\text{Vol}_g(M)(n+2)^2}\right)^{\frac{n-1}{2}}dy^1\wedge\ldots \hat{dy^r}\wedge\ldots dy^n.$$
From~\eqref{e:berezin-tau} and~\eqref{e:berezin-tau0-Pi}, we deduce that
$$\int_{\ml{H}_{\Lambda}}W_0^r(f,t,y,dy,\eta)d\mu_{\Lambda}(f)\hspace{5cm}$$
$$=\frac{(-1)^{\frac{n-1}{2}}}{(2\pi)^{n-1}}\frac{(n-1)!}{\left(\frac{n-1}{2}\right)!}\frac{\pi^{\frac{n-3}{2}}}{2(n+2)^{\frac{n}{2}}}(n+2)\text{Vol}_g(M)e^{-\frac{(n+2)\text{Vol}_g(M)|\eta|^2}{2t^2}}dy^1\wedge\ldots \hat{dy^r}\wedge\ldots dy^n(1+\ml{O}(\Lambda^{-1})).$$
We now use equality~\eqref{e:berezin-step-1}. Summing over $r$ and $s$ and integrating over $t$, one finally gets that, if $n$ is odd,
$$\int_{\ml{H}_{\Lambda}}J(f,\omega,U)d\mu_{\Lambda}(f)=\frac{2(-1)^{\frac{n+1}{2}}}{\pi\operatorname{Vol}(\mathbb{S}^{n-1})}\left(\frac{\Lambda}{\sqrt{n+2}}\right)^n\int_{\mathring{T}^*U}dy^1\wedge\ldots dy^n\wedge\omega(y,dy,\eta,d\eta)+\ml{O}(\Lambda^{n-1}).$$

\appendix

\section{Derivatives of the Spectral projector.}
\label{derivativesspectralproj}

Recall that $(e_j)_{1\leq j\leq N(\Lambda)}$ is an orthonormal basis of $\ml{H}_{\Lambda}$ made of eigenfunctions of $\Delta$. We define the normalized projection kernel:
$$C_{\Lambda}(y,z):=\frac{1}{N(\Lambda)}\sum_{j=1}^{N(\Lambda)}e_j(y)e_j(z),$$
where $N(\Lambda)=\text{dim}\ \ml{H}_{\Lambda}.$ The reason why we can perform some computations with respect to the Gaussian measure $\mu_{\Lambda}$ is that we have very precise asymptotic informations on the kernel $C_{\Lambda}(y,z)$, at least on the diagonal. Precisely, Bin proved the following result~\cite{Bin04} building on earlier arguments of H\"ormander~\cite{Ho68}:
\begin{theo}\label{t:bin} Let $(U,\phi)$ be a sufficiently small geodesic normal coordinate chart. For every multiindices $\alpha$ and $\beta$ in $\IZ^n_+$, the following estimates hold uniformly for $y$ in $\phi(U)$, as $\Lambda\rightarrow+\infty$, 
$$\partial_y^{\alpha}\partial_{z}^{\beta}C_{\Lambda}(y,z)|_{y=z}=C_{n,\alpha,\beta}\Lambda^{|\alpha+\beta|}+\ml{O}(\Lambda^{|\alpha+\beta|-1}),$$
where, for $\alpha=\beta=0$,
$$C_{n,0,0}:=\frac{1}{\operatorname{Vol}_g(M)},$$
for $\alpha\equiv\beta\ (\operatorname{mod} 2)$ and $(\alpha,\beta)\neq (0,0)$,
$$C_{n,\alpha,\beta}:=(-1)^{\frac{|\alpha|-|\beta|}{2}}\frac{\prod_{j=1}^n(\alpha_j+\beta_j-1)!!}{\operatorname{Vol}_g(M)\left(|\alpha+\beta|+n\right)\ldots \left(n+2\right)},$$
and 
$$C_{n,\alpha,\beta}:=0\quad\text{otherwise}.$$
\end{theo}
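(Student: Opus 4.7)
My plan is to apply H\"ormander's pointwise Weyl asymptotics to the spectral function $e(\Lambda,y,z) := \sum_{\lambda_j \leq \Lambda} e_j(y)e_j(z)$, differentiate the resulting parametrix in $y$ and $z$, then evaluate on the diagonal. H\"ormander's construction of $\mathbf{1}_{[0,\Lambda^2]}(-\Delta_g)$ via the half-wave propagator $\cos(t\sqrt{-\Delta_g})$ realizes $e(\Lambda,y,z)$, in a small geodesic neighborhood, as a Fourier integral
$$e(\Lambda,y,z) = (2\pi)^{-n}\int_{|\xi|_{g(y)} \leq \Lambda} e^{i\langle \exp_y^{-1}(z),\xi\rangle}\,a(y,z,\xi)\,d\xi + R_\Lambda(y,z),$$
where $a$ is a classical symbol with $a(y,y,\xi)=1$ and $R_\Lambda$ is a controlled remainder. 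In the geodesic normal chart centered at $y_0$ one has $g_{ij}(y_0)=\delta_{ij}$ and $\exp_{y_0}^{-1}(z)=z$ to leading order, and by definition $C_\Lambda(y,z) = e(\Lambda,y,z)/N(\Lambda)$.

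The second step is to differentiate under the integral sign and extract the leading term. Derivatives in $y$ hitting the phase $e^{i\langle z-y,\xi\rangle}$ bring down $-i\xi_j$ and derivatives in $z$ bring down $+i\xi_j$; derivatives that land instead on the amplitude $a$, on the metric-dependent frequency cutoff, or on the exponential map produce contributions of one lower order in $\Lambda$ that are absorbed in the remainder. Setting $y=z=0$, the leading term reduces to $(2\pi)^{-n}i^{|\alpha|}(-i)^{|\beta|}\int_{|\xi|\leq\Lambda}\xi^{\alpha+\beta}d\xi$. By parity, this vanishes unless every component of $\gamma:=\alpha+\beta$ is even, that is $\alpha\equiv\beta\pmod{2}$; otherwise, spherical coordinates give
$$\int_{|\xi|\leq\Lambda}\xi^\gamma\,d\xi=\frac{\Lambda^{|\gamma|+n}}{|\gamma|+n}\cdot\frac{2\prod_{j=1}^n\Gamma\!\bigl(\tfrac{\gamma_j+1}{2}\bigr)}{\Gamma\!\bigl(\tfrac{|\gamma|+n}{2}\bigr)}.$$
Combining this with Weyl's law $N(\Lambda)=\frac{\operatorname{Vol}_g(M)}{2^n\pi^{n/2}\Gamma(n/2+1)}\Lambda^n+\ml{O}(\Lambda^{n-1})$, the identities $\Gamma(\tfrac{\gamma_j+1}{2})=\frac{(\gamma_j-1)!!}{2^{\gamma_j/2}}\sqrt{\pi}$ and $\Gamma(\tfrac{|\gamma|+n}{2}+1)=2^{-|\gamma|/2}(|\gamma|+n)(|\gamma|+n-2)\cdots(n+2)\Gamma(n/2+1)$, and the prefactor $i^{|\alpha|-|\beta|}=(-1)^{(|\alpha|-|\beta|)/2}$, one recovers exactly the constants $C_{n,\alpha,\beta}$; in particular, the $\Gamma(n/2+1)$ and $\pi^{n/2}$ factors cancel cleanly.

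The main obstacle is the uniform control of $\partial_y^\alpha\partial_z^\beta R_\Lambda(y,z)$ on the diagonal: H\"ormander's bare statement $R_\Lambda=\ml{O}(\Lambda^{n-1})$ says nothing about derivatives, and one needs $\partial_y^\alpha\partial_z^\beta R_\Lambda(y,y)=\ml{O}(\Lambda^{n-1+|\alpha|+|\beta|})$ to match the order of differentiation of the leading FIO term. This refinement, due to Bin building on Safarov--Vassiliev, is obtained by keeping track of the $\Lambda$-dependence at every step of the Duistermaat--H\"ormander construction: one truncates the wave kernel $\cos(t\sqrt{-\Delta_g})$ against a compactly supported symbol in $t$, differentiates the resulting oscillatory integral using the $\xi$-homogeneity of the phase, and then applies a Tauberian argument with a smoothed spectral cutoff to convert the cosine expansion back into a sharp Weyl expansion. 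Once this uniform derivative bound on $R_\Lambda$ is in place, dividing by $N(\Lambda)\sim c_n\Lambda^n$ yields the claimed error $\ml{O}(\Lambda^{|\alpha+\beta|-1})$ and completes the proof.
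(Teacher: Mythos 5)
The paper itself does not prove this statement: it quotes it directly from Bin \cite{Bin04}, who builds on H\"ormander \cite{Ho68}, so there is no internal proof to compare against other than that cited source. Your outline follows exactly the route of the cited proof --- H\"ormander's parametrix for the spectral function $e(\Lambda,y,z)$, differentiation under the integral in a geodesic normal chart, the on-diagonal monomial integrals $\int_{|\xi|\leq\Lambda}\xi^{\alpha+\beta}d\xi$ with the parity condition $\alpha\equiv\beta\ (\mathrm{mod}\ 2)$, and normalization by Weyl's law --- and your Gamma-factor bookkeeping is correct: it reproduces $C_{n,\alpha,\beta}$ exactly, including the sign $(-1)^{(|\alpha|-|\beta|)/2}$ and the denominator $(|\alpha+\beta|+n)(|\alpha+\beta|+n-2)\cdots(n+2)$. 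The only genuinely delicate point, the uniform bound $\partial_y^{\alpha}\partial_z^{\beta}R_\Lambda(y,y)=\mathcal{O}(\Lambda^{n-1+|\alpha|+|\beta|})$ for the remainder, is left at sketch level in your write-up, but that is precisely what \cite{Bin04} establishes, so your proposal is in substance the same argument the paper relies on by citation.
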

\begin{rema}
We used the following conventions:
$$(-1)!!:=1\ \text{and}\ (2m-1)!!:=(2m-1)(2m-3)\ldots 3\times 1.$$
\end{rema}

We gather the values of the derivatives of the spectral projector which are used to prove Proposition~\ref{p:pushforwardGaussmeasure}.
\begin{coro}\label{r:useful-asymp} One has
\begin{itemize}
 \item for derivatives of order $0$, one has $$C_{\Lambda}(y,z)|_{y=z}=\frac{1}{\operatorname{Vol}_g(M)}+\ml{O}(\Lambda^{-1}),$$
 \item for every $1\leq j\leq n$, one has $$\frac{1}{\Lambda^2}\partial_{y^j}^2C_{\Lambda}(y,z)|_{y=z}=-\frac{1}{(n+2)\operatorname{Vol}_g(M)}+\ml{O}(\Lambda^{-1}),$$
 \item for every $1\leq j\leq n$, one has $$\frac{1}{\Lambda^2}\partial_{y^j}\partial_{z^j}C_{\Lambda}(y,z)|_{y=z}=\frac{1}{(n+2)\operatorname{Vol}_g(M)}+\ml{O}(\Lambda^{-1}),$$
 \item for every $1\leq j\neq k\leq n$, one has $$\frac{1}{\Lambda^4}\partial_{y^j}^2\partial_{z^k}^2C_{\Lambda}(y,z)|_{y=z}=\frac{1}{\Lambda^4}\partial_{y^jy^k}^2\partial_{z^jz^k}^2C_{\Lambda}(y,z)|_{y=z}=\frac{1}{(n+2)(n+4)\operatorname{Vol}_g(M)}+\ml{O}(\Lambda^{-1}),$$
 \item for every $1\leq j\leq n$, one has $$\frac{1}{\Lambda^4}\partial_{y^j}^2\partial_{z^j}^2C_{\Lambda}(y,z)|_{y=z}=\frac{3}{(n+2)(n+4)\operatorname{Vol}_g(M)}+\ml{O}(\Lambda^{-1}),$$
 \item for every $\alpha\neq \beta\ (\text{mod}\ 2)$, one has $$\frac{1}{\Lambda^{|\alpha+\beta|}}\partial_y^{\alpha}\partial_{z}^{\beta}C_{\Lambda}(y,z)|_{y=z}=\ml{O}(\Lambda^{-1}).$$
\end{itemize}
\end{coro}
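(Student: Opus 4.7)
The plan is that Corollary~\ref{r:useful-asymp} is a direct specialization of Theorem~\ref{t:bin}: for each line of the corollary I would select the appropriate multiindices $(\alpha,\beta)$, read off the constant $C_{n,\alpha,\beta}$ from Bin's formula, and divide the resulting asymptotic by $\Lambda^{|\alpha+\beta|}$. Before starting, I would point out the one notational subtlety: the product $(|\alpha+\beta|+n)\cdots(n+2)$ in the denominator of $C_{n,\alpha,\beta}$ is meant to step down by $2$, i.e.\ $(n+|\alpha+\beta|)(n+|\alpha+\beta|-2)\cdots(n+2)$, which is consistent with the convention $(-1)!!=1$ and the fact that the constant vanishes unless every $\alpha_k+\beta_k$ is even.

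With this reading fixed, I would simply tabulate the cases. For $\alpha=\beta=0$, the formula gives $C_{n,0,0}=\text{Vol}_g(M)^{-1}$. For $\alpha=2e_j$, $\beta=0$, one has $|\alpha|-|\beta|=2$, $|\alpha+\beta|=2$, and $\prod_k(\alpha_k+\beta_k-1)!!=1$, whence $C_{n,2e_j,0}=-\bigl(\text{Vol}_g(M)(n+2)\bigr)^{-1}$. For $\alpha=\beta=e_j$, the sign is $+1$, the product is $1!!\cdot 1=1$, and again $|\alpha+\beta|=2$, giving $\bigl(\text{Vol}_g(M)(n+2)\bigr)^{-1}$. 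For $\alpha=2e_j$, $\beta=2e_k$ with $j\neq k$, and likewise for $\alpha=\beta=e_j+e_k$ with $j\neq k$, the double factorials all reduce to $1$ and $|\alpha+\beta|=4$, yielding $\bigl(\text{Vol}_g(M)(n+2)(n+4)\bigr)^{-1}$. The only line that differs by a numerical factor is $\alpha=\beta=2e_j$: here the $j$-th double factorial is $(2+2-1)!!=3!!=3$, giving $3\bigl(\text{Vol}_g(M)(n+2)(n+4)\bigr)^{-1}$.

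For the last item, if some $\alpha_k+\beta_k$ is odd (which is equivalent to $\alpha\not\equiv\beta\pmod 2$ as used in the statement), Theorem~\ref{t:bin} asserts that $C_{n,\alpha,\beta}=0$, so the leading term vanishes and only the error $\ml{O}(\Lambda^{|\alpha+\beta|-1})$ survives; dividing by $\Lambda^{|\alpha+\beta|}$ gives the claimed $\ml{O}(\Lambda^{-1})$. There is no real obstacle here; the calculation is a bookkeeping exercise, and the only pitfall to guard against is the interpretation of the descending product in Bin's denominator.
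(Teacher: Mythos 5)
Your proposal is correct and coincides with what the paper does implicitly: the corollary is stated as a direct specialization of Theorem~\ref{t:bin}, obtained exactly as you describe by choosing the multiindices $(\alpha,\beta)$, reading off $C_{n,\alpha,\beta}$, and dividing by $\Lambda^{|\alpha+\beta|}$. Your reading of the denominator $(|\alpha+\beta|+n)\cdots(n+2)$ as a product descending in steps of $2$ is the intended one (it is the only reading consistent with the value $\frac{1}{(n+2)(n+4)}$ for the fourth-order cases), and all the tabulated constants, including the factor $3=3!!$ for $\alpha=\beta=2e_j$ and the vanishing when some $\alpha_k+\beta_k$ is odd, are right.
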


\section{Proof of Proposition \ref{intertwiningproposition}.}
\label{intertwiningsectionappendix}
We fix $f$ in $\Omega_{\Lambda}=\ml{H}_{\Lambda}\backslash D_{\Lambda}$. Recall that $D_{\Lambda}$ is a zero measure subset of $\ml{H}_{\Lambda}$ and that its complement is an open.
Given any test function $\Psi(f,y,\eta,t)$ in $\ml{D}(\Omega_{\Lambda}\times\mathring{T}^*U\times\IR^*)$, 
our goal in this paragraph is to make sense of the following Fubini equality~:
\begin{eqnarray*}
 \left\la 1,\left\la G_{\Lambda}(f)^*(\delta_0^{n+1}),\Psi\right\ra_{\mathring{T}^*U\times\IR^*}\right\ra_{\Omega_{\Lambda}} & = & \left\la G_{\Lambda}^*(\delta_0^{n+1}),\Psi\right\ra_{\Omega_{\Lambda}\times \mathring{T}^*U\times\IR^*}\\
 & =  & \left\la 1,\left\la G_{\Lambda}(y,\eta,t)^*(\delta_0^{n+1}),\Psi\right\ra_{\Omega_{\Lambda}}\right\ra_{\mathring{T}^*U\times\IR^*}.
\end{eqnarray*}

The fact that the distribution $G_\Lambda(f)^*\delta_0^{n+1}$ is well defined for $f$ in $\Omega_{\Lambda}$ follows from the following theorem of H\"ormander -- Th.~$8.2.4$ in~\cite{Ho90}:
\begin{theo}\label{t:hormander-pull-back} Let $\Omega_1\subset\IR^{d_1}$ and $\Omega_2\subset\IR^{d_2}$ be two open subsets. For any smooth map $G:\Omega_1\rightarrow \Omega_2$, the normal of $G$ is defined as
$$N_G:=\left\{(G(x),\tau)\in\Omega_2\times\IR^{d_2*}:\tau\circ d_xG=0\right\},$$
the pull--back operation extends uniquely to the distributions $u\in\ml{D}'(\Omega_2)$ whose wavefront $WF(u)$ does not intersect the normal $N_G$ of $G$.
Moreover, the wavefront set of $G^*u$ is contained in the set
$$G^*WF(u):=\left\{(x,\tau\circ d_xG):(G(x),\tau)\in WF(u)\right\}.$$
\end{theo}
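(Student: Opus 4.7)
The plan is to prove Hörmander's pull-back theorem via microlocal decomposition and non-stationary phase, following the classical approach in~\cite{Ho90}. First, I would reduce to a local statement using a $C^\infty$ partition of unity in both $\Omega_1$ and $\Omega_2$: it suffices to define $G^*u$ on a neighborhood of any point $x_0\in\Omega_1$ and to verify the wavefront bound there. For $u\in C^\infty(\Omega_2)$ the pull-back is just $u\circ G$, so the task is to continuously extend this operation to the space $\ml{D}'_\Gamma(\Omega_2):=\{u\in\ml{D}'(\Omega_2):WF(u)\subset \Gamma\}$ for any closed conic $\Gamma\subset T^*\Omega_2\setminus 0$ disjoint from $N_G$, equipped with its natural H\"ormander topology in which $C^\infty_c$ is sequentially dense.

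Next, I would microlocally split $u$ near $G(x_0)$. Choose finitely many pseudodifferential operators $\chi_j(y,D_y)\in\Psi^0(\Omega_2)$ whose principal symbols are supported in small open cones $\Gamma_j\subset T^*\Omega_2\setminus 0$, such that the $\Gamma_j$ cover the fiber over $G(x_0)$ and $\sum_j\chi_j\equiv \operatorname{Id}$ modulo a smoothing operator near $G(x_0)$. Pieces with $\Gamma_j\cap WF(u)=\emptyset$ are smooth near $G(x_0)$ and their pull-backs are classical; for the remaining pieces $\Gamma_j\cap N_G=\emptyset$ by hypothesis, and I would represent
\begin{equation*}
\chi_j(y,D_y)u(y)=\int e^{i(y-y')\cdot\eta}\,a_j(y,\eta)\,u(y')\,dy'\,d\eta,
\end{equation*}
with $a_j$ a zero-order symbol whose $\eta$-support is conic in $\Gamma_j$, and formally set
\begin{equation*}
G^*\bigl(\chi_j(y,D_y)u\bigr)(x)=\int e^{i(G(x)-y')\cdot\eta}\,a_j(G(x),\eta)\,u(y')\,dy'\,d\eta.
\end{equation*}

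To make sense of this as a distribution in $x$, I would exploit that $\Gamma_j\cap N_G=\emptyset$ forces $\eta\circ d_xG\neq 0$ on the relevant support, producing a first-order differential operator $L_x$ with $L_x\,e^{i\eta\cdot G(x)}=e^{i\eta\cdot G(x)}$ whose coefficients are symbolic of order $-1$ in $\eta$. Iterated integration by parts in $x$ yields arbitrary polynomial decay in $\eta$, so after pairing with a test function in $\ml{D}(\Omega_1)$ the integral converges absolutely and depends continuously on $u\in\ml{D}'_\Gamma(\Omega_2)$. Uniqueness of the extension then follows from sequential density of $C^\infty_c$ in $\ml{D}'_\Gamma$.

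The wavefront bound is obtained by a non-stationary phase argument applied to the $\eta$ integral: for $\tau_0$ not of the form $\eta\circ d_{x_0}G$ with $(G(x_0),\eta)\in\Gamma$, integration by parts in $\eta$ against $\varphi(x)e^{-ix\cdot\tau}$ gives rapid decay as $|\tau|\to\infty$ in a conic neighborhood of $\tau_0$, placing $(x_0,\tau_0)$ outside $WF(G^*u)$. The main technical obstacle is the careful bookkeeping needed to verify that the construction is independent of the cutoffs $\chi_j$, the symbol representatives $a_j$, and the choice of local coordinates, together with the continuity in the H\"ormander topology -- a task which is routine but requires the symbolic calculus of pseudodifferential operators in its full generality, as carried out in Section~8.2 of~\cite{Ho90}.
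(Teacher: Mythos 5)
The paper does not give its own proof of this statement: it is quoted verbatim as Theorem~8.2.4 of H\"ormander~\cite{Ho90} and used as a black box. Your outline is a faithful reconstruction of H\"ormander's standard argument (local reduction, microlocal cut-offs supported in cones either missing $WF(u)$ or missing $N_G$, oscillatory-integral representation, integration by parts in $x$ using $\eta\circ d_xG\neq 0$ to gain decay in $\eta$, continuity and density in $\ml{D}'_\Gamma$, and a non-stationary phase argument for the wavefront bound), so it matches the route of the cited reference; the only caveat is that the displayed formula for $G^*(\chi_j(y,D_y)u)$ presupposes that $u$ has already been localized to compact support so that the $y'$-integral makes sense, a point you should make explicit when reducing to the local statement.
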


\begin{rema} 
For the definition of the wavefront set, we refer to~\cite{Ho90, BrDaHe14a}. Recall that, in the case of $\delta_0^{n+1}$, one has
$$WF(\delta_0^{n+1}):=\left\{(0,\tau):\tau\neq 0\right\}\subset T^{*}\IR^{n+1}.$$
\end{rema}

In the following, we will apply this Theorem in three distinct situations.
Recall that, for every $f$ in $\ml{H}_{\Lambda}$, we defined the following map on $\ml{H}_{\Lambda}\times \phi(U)\times\IR^n\times\IR^*$:
$$G_{\Lambda}:(f,y,\eta,t)\in \ml{H}_{\Lambda}\times \phi(U)\times\IR^n\times\IR^*\longmapsto\left(f(y);\frac{t}{\Lambda}\partial_{y_1}f-\eta_1,\ldots,\frac{t}{\Lambda}\partial_{y_n}f-\eta_n\right)\in\IR\times \IR^n,$$
and our idea is to
think of $\ml{H}_{\Lambda}\times \phi(U)\times\IR^n\times\IR^*$
as the cartesian product of $\ml{H}_{\Lambda}$ with $\phi(U)\times\IR^n\times\IR^*$
and to think of the global distribution
$G_\Lambda^*\delta_0^{n+1}$ 
also as a distribution on $\ml{H}_{\Lambda}$ 
depending smoothly on the parameters in $\phi(U)\times\IR^n\times\IR^* $
and conversely as a distribution on  $\phi(U)\times\IR^n\times\IR^*$ 
depending smoothly on the parameters in
$\ml{H}_{\Lambda}$.
To be more precise, we will use H\"ormander's Theorem in the following three cases~:
\begin{enumerate}
 \item We fix $f$ in $\Omega_{\Lambda}$, and we consider the partial map $G_{\Lambda}(f):\mathring{T}^*U\times\IR^*\rightarrow \IR^{n+1}$. As $d_yf\neq 0$ on $f^{-1}(0)$, one can verify that
$$N_{G_{\Lambda}(f)}\cap WF(\delta_0^{n+1})=\emptyset.$$
It means that if we freeze $f$, the pull--back $G_\Lambda(f)^*\delta_0^{n+1}$ is a well--defined
distribution on $\mathring{T}^*U\times\IR^*$.
 \item We consider the map $G_{\Lambda}:\ml{H}_{\Lambda}\times\mathring{T}^*U\times\IR_{+}^*\rightarrow \IR^{n+1}$. In order to apply H\"ormander's result, we would need to verify
$$N_{G_{\Lambda}}\cap WF(\delta_0^{n+1})=\emptyset,$$
which is slightly less obvious.
\item We fix $(y,\eta,t)$ in $\mathring{T}^*U\times\IR^*$, and we consider the partial map $G_{\Lambda}(y,\eta,t):\ml{H}_{\Lambda}\rightarrow \IR^{n+1}$. The situation in this case is also slightly different from the first case. 

\end{enumerate}
The second and the third cases are in some sense related to the notion of ampleness appearing for instance in~\cite{GaWe14b, Le14}, and they are contained in the following lemma:
\begin{lemm}\label{l:ample} There exists $\Lambda_0>0$ such that, for every $\Lambda\geq\Lambda_0$,
$$N_{G_{\Lambda}}\cap WF(\delta_0^{n+1})=\emptyset,$$
and, for every $(y,\eta,t)$ in $\mathring{T}^*U\times\IR^*$, one has 
 $$N_{G_{\Lambda}(y,\eta,t)}\cap WF(\delta_0^{n+1})=\emptyset.$$
In particular, $G_{\Lambda}^*(\delta_0^{n+1})$ is well defined as a distribution on $\ml{H}_{\Lambda}\times \mathring{T}^*U\times\IR^*$ and $G_{\Lambda}(y,\eta,t)^*(\delta_0^{n+1})$ as a distribution on $\ml{H}_{\Lambda}$.
\end{lemm}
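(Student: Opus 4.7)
The plan is to exploit the explicit description $WF(\delta_0^{n+1})=\{0\}\times(\IR^{n+1})^*$, which turns the condition $N_G\cap WF(\delta_0^{n+1})=\emptyset$ into the statement that $G$ is a submersion at every point of $G^{-1}(0)$. Both assertions of the lemma thus reduce to proving surjectivity of a differential, after which the ``In particular'' is an immediate application of Theorem~\ref{t:hormander-pull-back}.

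I will first dispose of the global map $G_\Lambda$, which is the easy case. At any zero $(f,y,\eta,t)$ of $G_\Lambda$, the identity $\eta=(t/\Lambda)d_y f$ with $\eta\neq 0$ and $t\neq 0$ forces $d_yf\neq 0$. The $\eta$-directional derivative of $G_\Lambda$ is minus the identity on the last $n$ components, so it already surjects onto $\{0\}\times\IR^n$; it remains only to produce a variation realizing the $0$-th component, i.e.\ some $\delta f\in\ml{H}_\Lambda$ with $\delta f(y)\neq 0$. The reproducing kernel $k_y(z)=\sum_{j=1}^{N(\Lambda)}e_j(y)e_j(z)$ does the job, since $\|k_y\|_{L^2}^2=k_y(y)=N(\Lambda)C_\Lambda(y,y)$, which by Corollary~\ref{r:useful-asymp} is equivalent to $N(\Lambda)/\operatorname{Vol}_g(M)$ for $\Lambda$ large.

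The real content lies in the partial map $G_\Lambda(y,\eta,t)\colon\ml{H}_\Lambda\to\IR^{n+1}$, whose differential is the rescaled $1$-jet evaluation $\delta f\mapsto\bigl(\delta f(y),(t/\Lambda)\partial_{y^1}\delta f(y),\ldots,(t/\Lambda)\partial_{y^n}\delta f(y)\bigr)$. By duality, its surjectivity is equivalent to linear independence on $\ml{H}_\Lambda$ of the $(n+1)$ evaluation functionals involved, hence to invertibility of their $L^2$ Gram matrix $M_\Lambda(y,t)$. These functionals are represented in $\ml{H}_\Lambda$ by $k_y$ and the rescaled $(t/\Lambda)\partial_{y^i}k_y$, and the entries of $M_\Lambda(y,t)$ are proportional to values on the diagonal of $\partial_y^\alpha\partial_z^\beta C_\Lambda(y,z)$ with $|\alpha|,|\beta|\leq 1$. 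Plugging in the asymptotics from Corollary~\ref{r:useful-asymp} yields
$$\frac{M_\Lambda(y,t)}{N(\Lambda)}=\frac{1}{\operatorname{Vol}_g(M)}\operatorname{diag}\!\left(1,\tfrac{t^2}{n+2},\ldots,\tfrac{t^2}{n+2}\right)+\mathcal{O}(\Lambda^{-1})$$
uniformly in $y\in\phi(U)$. Since $t\neq 0$, the limit matrix is positive definite, so $M_\Lambda(y,t)$ is invertible for $\Lambda\geq\Lambda_0$.

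The principal obstacle is to control the off-diagonal entries $\langle k_y,\partial_{y^i}k_y\rangle/\Lambda$ and $\langle\partial_{y^i}k_y,\partial_{y^j}k_y\rangle/\Lambda^2$ with $i\neq j$ and show they are of strictly lower order than the diagonal ones. This is exactly the content of the vanishing $C_{n,\alpha,\beta}=0$ when $\alpha\not\equiv\beta\pmod 2$ in Theorem~\ref{t:bin}: the mixed kernel derivatives lose one power of $\Lambda$ compared to the generic bound. Once this is in hand, $G_\Lambda$ and $G_\Lambda(y,\eta,t)$ are both submersions on their zero sets for $\Lambda$ large, the wavefront conditions hold, and H\"ormander's pullback theorem delivers the well-definedness of $G_\Lambda^*(\delta_0^{n+1})$ and $G_\Lambda(y,\eta,t)^*(\delta_0^{n+1})$ as distributions.
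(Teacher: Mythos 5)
Your proposal is correct, and its key step runs along a genuinely different route from the paper's. Both arguments reduce the wavefront condition to surjectivity of the differential along the zero set, but for the partial map $G_{\Lambda}(y,\eta,t)$ the paper uses no kernel asymptotics at all: it chooses $n+1$ smooth functions $f_0,\dots,f_n$ on $M$ whose $1$-jet determinant $\det\bigl(f_l(y),\partial_{y^1}f_l,\dots,\partial_{y^n}f_l\bigr)_{0\le l\le n}$ is nonvanishing on $U$, approximates them in the $\ml{C}^1$ topology by elements of $\ml{H}_{\Lambda}$ (density of finite combinations of eigenfunctions, Lemma~\ref{eigenfunctionsschauderbasis}), pulls the factor $(t/\Lambda)^n$ out of the determinant so that $t$ plays no role, and obtains a $\Lambda_0$ uniform in $y\in U$; the global map is then handled by the same determinant argument. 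You instead make the linear independence of the $1$-jet evaluation functionals quantitative via the reproducing kernel and its Gram matrix, which amounts to the nondegeneracy of the block $A_{1,1}$ in Proposition~\ref{p:pushforwardGaussmeasure}; this buys explicit constants and avoids the density lemma, at the price of invoking Theorem~\ref{t:bin} and Corollary~\ref{r:useful-asymp} (note the corollary does not list $\partial_{y^i}\partial_{z^j}$ with $i\neq j$, but Theorem~\ref{t:bin} gives $C_{n,\alpha,\beta}=0$ there, exactly as you use). Your treatment of the global map, using the $\eta$-directions together with one function not vanishing at $y$, is in fact simpler than the paper's.

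One point to tighten: the lemma demands a single $\Lambda_0$ valid for all $(y,\eta,t)$, while your displayed asymptotics for $M_{\Lambda}(y,t)/N(\Lambda)$ carries an error term that is not uniform in $t$ (as $t\to 0$ the smallest eigenvalue of the limiting matrix degenerates like $t^2$, and as $t\to\infty$ the error grows like $t^2\Lambda^{-1}$), so the argument ``positive definite limit plus $\mathcal{O}(\Lambda^{-1})$'' only yields a $t$-dependent threshold as written. The repair is immediate and already implicit in your setup: $M_{\Lambda}(y,t)=D_t\,\tilde M_{\Lambda}(y)\,D_t$ with $D_t=\operatorname{diag}(1,t/\Lambda,\dots,t/\Lambda)$ invertible for $t\neq 0$, where $\tilde M_{\Lambda}(y)$ is the Gram matrix of the unscaled jet functionals $f\mapsto f(y)$, $f\mapsto\partial_{y^i}f(y)$; invertibility is thus a statement about $y$ alone, and Corollary~\ref{r:useful-asymp} gives $\tilde M_{\Lambda}(y)/N(\Lambda)=\operatorname{Vol}_g(M)^{-1}\operatorname{diag}\bigl(1,\tfrac{1}{n+2},\dots,\tfrac{1}{n+2}\bigr)+\mathcal{O}(\Lambda^{-1})$ uniformly on $\phi(U)$, hence a uniform $\Lambda_0$, matching the statement of Lemma~\ref{l:ample}.
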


\begin{rema}\label{r:extension} We note that in the second and in the third cases, the distributions are defined on the whole space $\ml{H}_{\Lambda}$, and not only on $\Omega_{\Lambda}$. In particular, we can consider their restriction to the open subset $\Omega_{\Lambda}$. 
\end{rema}

Before continuing our discussion on the properties of these pulled-back distributions, we start by giving the proof of the lemma.

\begin{proof} Suppose that $(0,\tau)$ belongs to the normal set $N_{G_{\Lambda}(y,\eta,t)}$ (the other case works similarly). Then, for every $f$ in $\ml{H}_{\Lambda}$, one has
$$\tau_0 f(y)+\sum_{j=1}^{N(\Lambda)}\tau_j\frac{t\partial_{y^j} f}{\Lambda}=0.$$
Consider now a family of functions $f_0^{\Lambda}, f_1^{\Lambda},\ldots ,f_n^{\Lambda}$  in $\ml{H}_{\Lambda}$, we write
$$\text{det}\left(\left(f_l^{\Lambda}(y),\frac{t\partial_{y^1} f_l^{\Lambda}}{\Lambda},\ldots, \frac{t\partial_{y^n} f_l^{\Lambda}}{\Lambda}\right)_{0\leq l\leq n}\right)= \frac{t^n}{\Lambda^n}\text{det}\left(\left(f_l^{\Lambda}(y),\partial_{y^1} f_l^{\Lambda},\ldots, \partial_{y^n} f_l^{\Lambda}\right)_{0\leq l\leq n}\right).$$
If we are able to find a family $(f_l^{\Lambda})_{0\leq l\leq n}$ in $\ml{H}_{\Lambda}$ such that the right-hand side of this equality does not vanish, we will deduce that $\tau=0$. For that purpose, we note that we can find\footnote{We may have to pick a slightly smaller neighborhood $U$.} a family of smooth functions $f_0, f_1,\ldots, f_n$ on $M$ such that the right hand side does not vanish for every $y$ in $U$. Unfortunately, these functions do not a priori belong to $\ml{H}_{\Lambda}$ but we can solve this issue since by Lemma \ref{eigenfunctionsschauderbasis} proved in appendix, 
finite linear combinations of eigenfunctions are everywhere dense in $C^\infty(M)$. 
Said concretely, to every $0\leq l\leq n$ corresponds a sequence $(f_l^{\Lambda})_{\Lambda}$ in $\ml{H}_{\Lambda}$ which converges to $f_l$ in every $\ml{C}^k$ norm (this is just an approximation property like the Stone Weierstrass Theorem but on Riemannian manifolds). From this, we deduce the statement of the lemma for $\Lambda$ large enough. 
\end{proof}
We refer the reader to the section~\ref{sectionappendixwhitneyampleness} of the appendix where a similar method yields
an unusual proof of the Whitney embedding Theorem.

As a direct consequence of the pull-back theorem of H\"ormander, we can also describe the wavefront set of $G_{\Lambda}^*(\delta_0^{n+1})$ as follows:
\begin{coro}\label{c:wavefront} For $\Lambda>0$ large enough, the wavefront set of $G_{\Lambda}^*(\delta_0^{n+1})$ is included in the following subset of $T^*(\ml{H}_{\Lambda}\times\mathring{T}^*U\times\IR^*$):
\begin{eqnarray*}
WF(G_{\Lambda}^*(\delta_0^{n+1}))&\subset&\{(y^j,\eta_j,t,f;\widehat{y}_j,\widehat{\eta}^j,\widehat{t},\widehat{f} )\text{ such that } G_{\Lambda}(f,y,\eta,t)=(0,\dots,0)\\
&& \widehat{y}_j=\tau_0 \partial_{y^j}f+\tau_1\frac{t}{\Lambda} \partial^2_{y^jy^1}f+\dots + 
\tau_n\frac{t}{\Lambda} \partial^2_{y^jy^n}f,\ \widehat{\eta}_j=\tau_j,\\
&&\widehat{t}=\frac{1}{\Lambda}\sum_{j=1}^n\tau_j \partial_{y^j}f, \ 
\forall 1\leq i\leq N(\Lambda),\ \widehat{f}_i=\tau_0e_i+\sum_{j=1}^n\frac{t\tau_j}{\Lambda}\partial_{x^j}e_i\\
&&\text{for }(\tau_0,\tau_1,\dots,\tau_n)\neq (0,\dots,0) \}.
\end{eqnarray*}
 
\end{coro}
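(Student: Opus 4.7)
The corollary is the content of H\"ormander's pull-back theorem (Theorem~\ref{t:hormander-pull-back}) applied to the map $G_{\Lambda}$ and to the distribution $\delta_0^{n+1}$ on $\IR^{n+1}$. The preliminary requirement of the theorem, namely that the conormal $N_{G_{\Lambda}}$ does not meet the wavefront set of $\delta_0^{n+1}$, has already been established in Lemma~\ref{l:ample} for $\Lambda$ large enough. Accordingly, $G_{\Lambda}^*(\delta_0^{n+1})$ is well defined and H\"ormander's inclusion reads
$$WF(G_{\Lambda}^*(\delta_0^{n+1})) \subset G_{\Lambda}^*WF(\delta_0^{n+1}) = \{(x,\tau\circ d_xG_{\Lambda}) : (G_{\Lambda}(x),\tau)\in WF(\delta_0^{n+1})\}.$$

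The plan is then to unpack this inclusion explicitly. First, I would use the well-known identity $WF(\delta_0^{n+1}) = \{0\}\times ((\IR^{n+1})^*\setminus\{0\})$, which immediately yields the base-point condition $G_{\Lambda}(f,y,\eta,t) = 0$ stated in the corollary and identifies the fiber with an arbitrary nonzero covector $\tau = (\tau_0,\ldots,\tau_n)$.

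Second, I would differentiate the explicit expression
$$G_{\Lambda}(f,y,\eta,t) = \Big(f(y),\ \tfrac{t}{\Lambda}\partial_{y^1}f - \eta_1,\ldots,\tfrac{t}{\Lambda}\partial_{y^n}f - \eta_n\Big)$$
with respect to each of its four types of arguments and pair the resulting linear maps with $\tau$. Differentiation in $y^j$ yields $\widehat{y}_j = \tau_0\partial_{y^j}f + \tfrac{t}{\Lambda}\sum_k\tau_k\partial^2_{y^jy^k}f$. Differentiation in $\eta_j$ contributes only to the $j$-th component of $G_{\Lambda}$, giving $\widehat{\eta}_j = \tau_j$ up to a sign convention. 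Differentiation in $t$ gives $\widehat{t} = \tfrac{1}{\Lambda}\sum_j\tau_j\partial_{y^j}f$. Finally, for the infinite-dimensional variable $f\in \ml{H}_{\Lambda}$, decomposing a tangent vector $\widehat{f} = \sum_i\widehat{f}_ie_i$ in the orthonormal basis $(e_i)_{1\leq i\leq N(\Lambda)}$ of $\ml{H}_{\Lambda}$ and using linearity of $f\mapsto f(y)$ and $f\mapsto\partial_{y^j}f(y)$ gives $\widehat{f}_i = \tau_0 e_i(y) + \tfrac{t}{\Lambda}\sum_j\tau_j\partial_{x^j}e_i$, which matches the expression in the corollary.

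I do not anticipate any substantial obstacle, since the only nontrivial analytic input -- the non-characteristic condition needed to apply Theorem~\ref{t:hormander-pull-back} -- has already been handled by Lemma~\ref{l:ample}. The remaining work is a mechanical transposition of $d_xG_{\Lambda}$ in the variable splitting $(f,y,\eta,t)$, combined with careful bookkeeping of signs, indices and the identification of the cotangent space to the infinite-dimensional factor $\ml{H}_{\Lambda}$ with $\ml{H}_{\Lambda}$ itself via the $L^2$ inner product that underlies the Gaussian measure $\mu_{\Lambda}$.
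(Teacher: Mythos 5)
Your proposal is correct and follows exactly the route the paper intends: the corollary is stated there as a direct consequence of H\"ormander's pull-back theorem (Theorem~\ref{t:hormander-pull-back}), with Lemma~\ref{l:ample} guaranteeing the non-characteristic condition, and the inclusion obtained by computing $\tau\circ d G_{\Lambda}$ in the splitting $(f,y,\eta,t)$, identifying the $f$-directions with the coefficients in the basis $(e_i)_{i\leq N(\Lambda)}$. Your explicit differentials match the formulas in the statement (the sign in $\widehat{\eta}_j$ being a harmless convention, as you note), so there is nothing to add.
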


\begin{rema}
This corollary has the following consequence. The set $WF(G_{\Lambda}^*(\delta_0^{n+1}))$ does not contain any element of the form $(y^j,\eta_j,t,f;0,0,0,\widehat{f} ),$ with $\widehat{f}\neq 0$ or $(y^j,\eta_j,t,f;\widehat{y}_j,\widehat{\eta}^j,\widehat{t},0 )$ with $(\widehat{y}_j,\widehat{\eta}^j,\widehat{t})\neq 0$. The first observation is more or less immediate if one uses Theorem~\ref{t:bin} in the case $\alpha=\beta=0$. The second one follows from the proof of lemma~\ref{l:ample}.
\end{rema}

The relevance of such a remark is due to the following general result which can be found in \cite[Proposition 1.3 p.~502]{ChaPi82}~:
\begin{prop}\label{p:Chazarainprop}
Let $u$ be a distribution in $\mathcal{D}^\prime\left(\mathbb{R}^{n_1}\times\mathbb{R}^{n_2}\right)$.
We denote by $(x,y;\xi,\eta)\in T^*\left(\mathbb{R}^{n_1}\times\mathbb{R}^{n_2}\right)$ the coordinates
in cotangent space. If $WF(u)$ contains no elements of the form $\{(x,y;\xi,0)\text{ such that }\xi\neq 0 \}$, then
$u(x,y)$ is smooth in $x$ with value distribution in $y$ in the sense that for all test function $\varphi_2(y)$,
$x\longmapsto \langle u(x,.), \varphi_2(.)\rangle_{\IR^{n_2}} $ is smooth in $x$.
\end{prop}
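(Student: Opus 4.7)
The plan is to realize the map $x \mapsto \langle u(x,\cdot),\varphi_2(\cdot)\rangle$ as the pushforward of a distribution by the first projection and then to apply H\"ormander's wavefront calculus for pushforwards to conclude smoothness. Fix $\varphi_2\in\ml{D}(\IR^{n_2})$ and set $v:=\varphi_2(y)u(x,y)\in\ml{D}'(\IR^{n_1}\times\IR^{n_2})$. Since multiplication by a smooth function does not enlarge the wavefront set, $WF(v)\subset WF(u)$, and in particular $WF(v)$ still contains no element of the form $(x,y;\xi,0)$ with $\xi\neq 0$. Moreover the support of $v$ in the $y$ direction is contained in the compact set $\supp(\varphi_2)$, so the first projection $\pi_1:\IR^{n_1}\times\IR^{n_2}\to\IR^{n_1}$ is proper on $\supp(v)$, and the pushforward $F:=(\pi_1)_*v$ is a well-defined distribution on $\IR^{n_1}$ that satisfies, by definition of the pushforward,
$$\la F,\psi\ra_{\IR^{n_1}}=\la v,\psi(x)\otimes 1_y\ra=\la u,\psi(x)\varphi_2(y)\ra,\qquad\forall\psi\in\ml{D}(\IR^{n_1}).$$
Thus $F(x)$ is exactly the function $x\mapsto \la u(x,\cdot),\varphi_2(\cdot)\ra$ of the statement.

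The next step is to bound $WF(F)$. Using the pushforward rule for wavefront sets (see~\cite[Thm.~8.2.12]{Ho90} applied to the smooth submersion $\pi_1$, whose adjoint derivative sends $\xi$ to $(\xi,0)$), one has
$$WF(F)=WF\bigl((\pi_1)_*v\bigr)\subset\left\{(x,\xi)\in T^*\IR^{n_1}\setminus 0:\ \exists\,y\in\supp(\varphi_2),\ (x,y;\xi,0)\in WF(v)\right\}.$$
By the hypothesis on $WF(u)\supset WF(v)$, the right-hand set is empty. Hence $WF(F)=\emptyset$, which means $F\in C^\infty(\IR^{n_1})$. Since $\varphi_2$ was an arbitrary test function in $y$, this establishes that the map $x\mapsto\la u(x,\cdot),\varphi_2(\cdot)\ra$ is smooth, which is the content of the proposition.

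The only delicate point is the justification of the pushforward formula and of the wavefront inclusion; both are standard facts that work here because $v$ has compact support in the fiber direction of $\pi_1$, so that $\pi_1$ restricted to $\supp(v)$ is proper. The argument is essentially an application of the general principle that if a distribution has no wavefront in the conormal directions to a proper submersion, then it is smooth along that submersion. No further regularity hypothesis on $u$ is needed, which is exactly why this proposition will be useful later to interpret quantities such as $\la G_\Lambda^*(\delta_0^{n+1}),\Psi\ra$ as smooth functions of one set of variables with distributional dependence in the other.
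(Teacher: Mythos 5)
Your proof is correct in substance, but note that the paper itself does not prove this proposition: it is simply quoted from Chazarain--Piriou \cite[Prop.~1.3, p.~502]{ChaPi82}, so there is no internal argument to compare with. What you give is the standard wavefront-calculus proof, and it is essentially a repackaging of H\"ormander's Theorem~8.2.12: pairing the kernel $u$ with the test function $\varphi_2$ in the $y$-variables (equivalently, your multiplication by $\varphi_2$ followed by pushforward under $\pi_1$, which is indeed proper on $\supp(\varphi_2(y)u)$ because the $y$-support is compact) produces a distribution $F$ with $WF(F)\subset\{(x,\xi):(x,y;\xi,0)\in WF(u)\ \text{for some}\ y\in\supp\varphi_2\}$, which is empty by hypothesis, hence $F\in C^{\infty}(\IR^{n_1})$. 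The one point you leave implicit is the identification of this smooth function $F$ with the pointwise pairing $x\mapsto\la u(x,\cdot),\varphi_2\ra$ appearing in the statement: the hypothesis guarantees, via the pullback theorem applied to the inclusion $\iota_x:y\mapsto(x,y)$ (whose normal set is exactly $\{(x,y;\xi,0)\}$), that the restriction $u(x,\cdot)=\iota_x^*u$ is well defined, and one then checks $F(x)=\la\iota_x^*u,\varphi_2\ra$ for every $x$ by approximating $u$ by smooth functions in the normal topology on $\ml{D}'_{\Gamma}$ and using the sequential continuity of both the pullback and the partial pairing. With that one-line addition your argument is complete and is the proof one would find in the cited reference; it also yields the joint smoothness in $x$ of $x\mapsto\la u(x,\cdot),\varphi_2\ra$ uniformly for $\varphi_2$ in bounded sets, which is the form actually used in Appendix~\ref{intertwiningsectionappendix}.
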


Thus, one finds that there exists $f\in\ml{H}_{\Lambda}\mapsto T_{\Lambda}(f)\in\ml{D}'(\mathring{T}^*U\times\IR^*)$ such that, for every $\psi_1\in\ml{D}(\ml{H}_{\Lambda})$ and for every $\psi_2\in\ml{D}(\mathring{T}^*U\times\IR^*)$,
$$f\mapsto \la T_{\Lambda}(f),\psi_2\ra_{\mathring{T}^*U\times\IR^*}\in\ml{C}^{\infty}(\ml{H}_{\Lambda}),$$
and
$$(y,\eta,t)\mapsto \la G_{\Lambda}(y,\eta,t)^*\delta_0^{n+1},\psi_1\ra_{\Omega_{\Lambda}}\in\ml{C}^{\infty}(\mathring{T}^*U\times\IR^*).$$
Moreover, one has the ``following distributional version of the Fubini Theorem''~: 
\begin{eqnarray*}
\forall (\psi_1,\psi_2)\in\mathcal{D}(\ml{H}_\Lambda)\times \mathcal{D}(\mathring{T}^*U\times\IR^*)&&\\
 \left\la \psi_1,\left\la T_{\Lambda}(f),\psi_2\right\ra_{\mathring{T}^*U\times\IR^*}\right\ra_{\ml{H}_{\Lambda}} & = & \left\la G_{\Lambda}^*(\delta_0^{n+1}),\psi_1\boxtimes\psi_2\right\ra_{\ml{H}_{\Lambda}\times \mathring{T}^*U\times\IR^*}\\
 & =  & \left\la \psi_2,\left\la G_{\Lambda}(y,\eta,t)^*(\delta_0^{n+1}),\psi_1\right\ra_{\ml{H}_{\Lambda}}\right\ra_{\mathring{T}^*U\times\IR^*}.
\end{eqnarray*}
\begin{rema}
 Note that $T_{\Lambda}(f)$ is well defined on the whole space $\ml{H}_{\Lambda}$ while $G_{\Lambda}(f)^*(\delta_0^{n+1})$ was only defined on $\Omega_{\Lambda}$ by Theorem~\ref{t:hormander-pull-back}. For a fixed $f$ in $\Omega_{\Lambda}$, one has in fact $T_{\Lambda}(f)=G_{\Lambda}(f)^*(\delta_0^{n+1})$.
\end{rema}

By density of the functions of the form $\psi_1(f)\boxtimes\psi_2(y,\eta,t)$ (where $\boxtimes$ is the exterior tensor product) in $\ml{D}(\Omega_{\Lambda}\times\mathring{T}^*U\times\IR^*)$ -- see Ch.~IV in~\cite{Sch66}, 
given any test function $\Psi(f,y,\eta,t)$ in $\ml{D}(\Omega_{\Lambda}\times\mathring{T}^*U\times\IR^*)$, one has the following Fubini equality~:
\begin{eqnarray}
 \left\la 1,\left\la T_{\Lambda},\Psi\right\ra_{\mathring{T}^*U\times\IR^*}\right\ra_{\ml{H}_{\Lambda}} & = & \left\la G_{\Lambda}^*(\delta_0^{n+1}),\Psi\right\ra_{\ml{H}_{\Lambda}\times \mathring{T}^*U\times\IR^*}\\
\nonumber & =  & \left\la 1,\left\la G_{\Lambda}(y,\eta,t)^*(\delta_0^{n+1}),\Psi\right\ra_{\ml{H}_{\Lambda}}\right\ra_{\mathring{T}^*U\times\IR^*}.
\end{eqnarray}

\begin{rema}\label{r:positive} We emphasize that all the distributions considered so far are constructed from the Dirac distribution $\delta_0^{n+1}$ and that they are positive distributions. 
In particular, according to~\cite{Sch66} (Ch.~I), \emph{they can all be identified with positive Radon measures}. Therefore, it makes sense to test them against nonnegative measurable functions which are 
not necessarily integrable (or integrable function which are not necessarily smooth). This identification is used several times in our proof of proposition~\ref{l:L1}.
\end{rema}

\begin{rema} We now make a final useful comment. We fix $(\psi_1,\psi_2)\in\mathcal{D}(\ml{H}_\Lambda)\times \mathcal{D}(\mathring{T}^*U\times\IR^*)$. According to~\eqref{e:fubini-distribution}, one has
$$\int_{\mathring{T}^*U\times\IR^*}\psi_2(y,\eta,t)\left(\int_{\ml{H}_{\Lambda}}\psi_1(f) G_{\Lambda}(y,\eta,t)^*(\delta_0^{n+1})(df)\right)d^nyd^n\eta dt$$
$$=\int_{\ml{H}_{\Lambda}}\psi_1(f)\left(\int_{\mathring{T}^*U\times\IR^*}\psi_2(y,\eta,t) T_{\Lambda}(d^ny,d^n\eta,dt)\right)df.$$ 
From the above discussion, one knows that
$$f\mapsto \psi_1(f)\left(\int_{\mathring{T}^*U\times\IR^*}\psi_2(y,\eta,t) T_{\Lambda}(d^ny,d^n\eta,dt))\right)$$
is a smooth compactly supported function. As $\Omega_{\Lambda}=\ml{H}_{\Lambda}\backslash D_{\Lambda}$ and $D_{\Lambda}$ has zero Lebesgue measure, one has then
$$\int_{\mathring{T}^*U\times\IR^*}\psi_2(y,\eta,t)\left(\int_{\ml{H}_{\Lambda}}\psi_1(f) G_{\Lambda}(y,\eta,t)^*(\delta_0^{n+1})(df)\right)d^nyd^n\eta dt$$
$$=\int_{\Omega_{\Lambda}}\psi_1(f)\left(\int_{\mathring{T}^*U\times\IR^*}\psi_2(y,\eta,t) G_{\Lambda}(f)^*(\delta_0^{n+1})(d^ny,d^n\eta,dt)\right)df.$$ 
$$=\int_{\mathring{T}^*U\times\IR^*}\psi_2(y,\eta,t)\left(\int_{\Omega_{\Lambda}}\psi_1(f) G_{\Lambda}(y,\eta,t)^*(\delta_0^{n+1})(df)\right)d^nyd^n\eta dt,$$ 
where the second equality follow from the fact that 
$$G_{\Lambda}(f)^*(\delta_0^{n+1})(d^ny,d^n\eta,dt)df \text{ and } G_{\Lambda}(y,\eta,t)^*(\delta_0^{n+1})(df)d^nyd^n\eta dt$$ 
define the same Radon measure on the space $\Omega_{\Lambda}\times\mathring{T}^*U\times\IR^*.$ 
Note also that this equality holds for any smooth compactly supported function $\psi_2$. In particular, one finds that, for a.e. $(y,\eta,t)$ in $\mathring{T}^*U\times\IR^*$,
\begin{equation}\label{e:distrib-restriction}
 \int_{\ml{H}_{\Lambda}}\psi_1(f) G_{\Lambda}(y,\eta,t)^*(\delta_0^{n+1})(df)=\int_{\Omega_{\Lambda}}\psi_1(f) G_{\Lambda}(y,\eta,t)^*(\delta_0^{n+1})(df).
\end{equation}

\end{rema}

\section{A functional analytic proof of Whitney embedding Theorem}
\label{sectionappendixwhitneyampleness}
In this short section, 
we would like to prove briefly
the approximation property used in the proof
of Lemma \ref{l:ample}, and to illustrate it by giving 
a simple proof of the Whitney embedding Theorem.
In fact, on a smooth compact manifold $M$, 
if we have enough smooth functions $e_1,\dots,e_N$ so that 
their linear combinations approximates
any smooth function with enough accuracy, then we will prove that we
can embed the manifold $M$ in $\mathbb{R}^N$
for $N$ sufficiently large by the simple map~:
$$ x\in M\mapsto (e_1(x),\dots,e_N(x))\in \mathbb{R}^N .$$
\subsection{Approximation of smooth functions}
We denote by $H^s(M)$ the Sobolev space
of functions or distributions of order $s$ i.e. the set of all distributions $t$
such that $\Delta_g^{\frac{s}{2}}t$ belongs to $L^2(M)$ with norm $\Vert .\Vert_{H^s(M)}=\Vert (1-\Delta_g)^{\frac{s}{2}}. \Vert_{L^2(M)}$. Note by the spectral Theorem that
$$\Vert (1-\Delta_g)^{\frac{s}{2}}t\Vert_{L^2(M)}=\left(\sum_{i=1}^{+\infty }(1+\lambda_i^2)^{\frac{s}{2}} \vert\langle t,e_i\rangle_{L^2}\vert^2\right)^{\frac{1}{2}},$$ 
where we use the conventions of the introduction. First, we show that eigenfunctions
of the Laplace operator span a vector space which is everywhere
dense in $C^\infty(M)$. This property was used to prove lemma~\ref{l:ample}.
\begin{lemm}\label{eigenfunctionsschauderbasis}
Let $(M,g)$ be a smooth compact boundaryless Riemannian manifold. 
Then, finite combination of eigenfunctions
$(e_i)_i$
of the Laplace-Beltrami operator $\Delta_g$ spans a dense subspace of $C^\infty(M)$. 
More precisely, for every
$C^k$ norm $\Vert . \Vert_{C^k}$~:
$$  \Vert f-\sum_{i=1}^N \langle f,e_i \rangle_{L^2} e_i\Vert_{C^k} \underset{N\rightarrow \infty}{\rightarrow} 0 .$$
\end{lemm}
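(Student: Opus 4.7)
The plan is to leverage the spectral characterization of Sobolev norms together with Sobolev embedding. First I would observe that any $f \in C^\infty(M)$ automatically lies in every Sobolev space $H^s(M)$, since by self-adjointness of $\Delta_g$ one has, for every integer $m \geq 0$,
\begin{equation*}
\lambda_i^{2m}\,\langle f,e_i\rangle_{L^2} = \langle \Delta_g^m f, e_i\rangle_{L^2},
\end{equation*}
and $\Delta_g^m f \in L^2(M)$. Parseval's identity then yields
\begin{equation*}
\sum_{i=1}^{+\infty}(1+\lambda_i^2)^{s}\,|\langle f,e_i\rangle_{L^2}|^2 < +\infty
\end{equation*}
for every real $s \geq 0$; in particular the Fourier coefficients $\langle f,e_i\rangle_{L^2}$ decay faster than any inverse power of $\lambda_i$.

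Next, I would show convergence of the partial sums $S_N(f) := \sum_{i=1}^N \langle f,e_i\rangle_{L^2}\,e_i$ in each $H^s(M)$. This is immediate from the Parseval-type identity above: the tail
\begin{equation*}
\|f - S_N(f)\|_{H^s(M)}^2 = \sum_{i>N}(1+\lambda_i^2)^{s}\,|\langle f,e_i\rangle_{L^2}|^2
\end{equation*}
is the remainder of a convergent series, hence tends to $0$ as $N \to +\infty$, for every $s \in \mathbb{R}$.

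Finally, to pass from Sobolev convergence to $C^k$ convergence, I would invoke the Sobolev embedding theorem on the smooth compact manifold $(M,g)$: for any $k \in \mathbb{N}$, one has a continuous embedding $H^s(M) \hookrightarrow C^k(M)$ as soon as $s > k + n/2$. Choosing such an $s$ and combining this with the previous step gives
\begin{equation*}
\|f - S_N(f)\|_{C^k(M)} \leq C_{s,k}\,\|f - S_N(f)\|_{H^s(M)} \xrightarrow[N\to+\infty]{} 0,
\end{equation*}
which is exactly the claim. There is no real obstacle here: the only non-trivial ingredients are Weyl's law (to ensure the Sobolev embedding is available on $M$ in its usual form) and the spectral definition of $H^s$, both of which are standard on a smooth compact boundaryless Riemannian manifold.
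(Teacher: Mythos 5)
Your proof is correct and follows essentially the same route as the paper's: reduce $C^k$ convergence to $H^s$ convergence via the Sobolev embedding $H^s(M)\hookrightarrow C^k(M)$ for $s>k+n/2$, and then observe that the spectral tail $\sum_{i>N}(1+\lambda_i^2)^s|\langle f,e_i\rangle_{L^2}|^2$ vanishes as $N\to\infty$ because smoothness of $f$ forces rapid decay of the coefficients. The only cosmetic remark is that Weyl's law is not actually needed anywhere (the identification of the spectrally defined $H^s$ norm with the usual one rests on elliptic regularity, not on eigenvalue asymptotics), but this does not affect the validity of your argument.
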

\begin{proof} Eigenfunctions
$(e_i)_i$ of the Laplace operator are smooth by elliptic regularity. Thus, any finite combination of them belongs to $\ml{C}^{\infty}(M)$.
By the Sobolev embedding Theorem, it suffices to show that for any $s>0$,
$$  \Vert f-\sum_{i=1}^N \langle f,e_i \rangle_{L^2} e_i\Vert_{H^s} \underset{N\rightarrow \infty}{\rightarrow} 0 .$$
For that purpose, we write
$$  \Vert f-\sum_{i=1}^N \langle f,e_i \rangle_{L^2} e_i\Vert_{H^s}^2=\sum_{i=N+1}^\infty (1+\lambda_i^2)^{\frac{s}{2}}\vert \langle f,e_i \rangle_{L^2}\vert^2,$$
which tends to $0$ as $f$ is smooth.
\end{proof}

\subsection{Whitney embedding Theorem}
We will now show how this approximation property can be used to prove the following version of Whitney embedding Theorem:

\begin{theo}
Let $(M,g)$ be a smooth compact Riemannian manifold without boundary.
Let $(e_i)_i$ be the sequence of eigenfunctions of the Laplace-Beltrami operator as defined in the introduction. 
Then, there exists $N_0$ such that for
any $N\geqslant N_0$, the map
\begin{equation}
i_N:x\in M\mapsto (e_1(x),\dots,e_N(x))\in \mathbb{R}^N
\end{equation}
is an embedding.
\end{theo}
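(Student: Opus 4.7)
The plan is to reduce this statement to the classical Whitney embedding theorem combined with the density provided by Lemma~\ref{eigenfunctionsschauderbasis}. First, since $M$ is a smooth compact boundaryless manifold, the classical Whitney embedding theorem (or, more elementarily, a partition of unity subordinate to a finite atlas: if $(U_k,\phi_k)_{k=1}^K$ are charts and $(\chi_k)$ a subordinate partition of unity, the map $x\mapsto (\chi_k(x)\phi_k(x),\chi_k(x))_{k=1}^K$ is an embedding) yields an integer $m\geq 1$ and smooth functions $f_1,\ldots,f_m \in C^\infty(M)$ such that $F := (f_1,\ldots,f_m): M \to \IR^m$ is a smooth embedding.

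Next, by Lemma~\ref{eigenfunctionsschauderbasis}, each $f_j$ is approximable in $C^1(M)$ by a finite linear combination of eigenfunctions. More precisely, for any $\varepsilon>0$ there exist an integer $N$ and real coefficients $(c_{i,j})_{1\leq i\leq N,\, 1\leq j\leq m}$ such that, setting $g_j := \sum_{i=1}^N c_{i,j}\,e_i$ and $G := (g_1,\ldots,g_m)$, one has $\|G - F\|_{C^1(M,\IR^m)} \leq \varepsilon$. Crucially, this approximating map factors as $G = L \circ i_N$, where $L : \IR^N \to \IR^m$ is the linear map with matrix $(c_{i,j})$.

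The key input, which I expect to be the main (though standard) obstacle, is the fact that on a compact manifold the set of smooth embeddings is open in the $C^1$-topology of $C^\infty(M,\IR^m)$: immersions form an open set by continuity of the differential, and if $F$ is an injective immersion then a compactness argument, based on the fact that every immersion is locally an embedding, shows that maps sufficiently close to $F$ in $C^1$ remain injective. Applying this to our approximation, for $\varepsilon$ small enough $G$ is an embedding, hence an injective immersion. The factorization $G = L \circ i_N$ then transfers both properties to $i_N$: if $i_N(x) = i_N(y)$ then $G(x)=G(y)$, whence $x=y$; and for $v\in\ker d_xi_N$ one has $v\in\ker d_xG = \{0\}$. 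Since $M$ is compact, this injective immersion is automatically an embedding, which furnishes the desired threshold $N_0$. Finally, for any $N'\geq N_0$, extending $L$ by zero columns shows that $i_{N'} = L'\circ i_{N'}$ factors an embedding, so $i_{N'}$ is itself an embedding, completing the argument.
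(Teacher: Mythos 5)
Your argument is correct, but it follows a genuinely different route from the paper. You first produce \emph{some} smooth embedding $F=(f_1,\dots,f_m):M\to\IR^m$ (by classical Whitney, or the elementary partition-of-unity map), approximate each $f_j$ in $C^1$ by spectral partial sums via Lemma~\ref{eigenfunctionsschauderbasis}, and then invoke the standard fact that embeddings of a compact manifold form a $C^1$-open subset of $C^\infty(M,\IR^m)$; the factorization $G=L\circ i_N$ through a linear map $L$ then transfers injectivity and the immersion property to $i_N$, and projection onto the first $N_0$ coordinates handles all $N\geq N_0$. The paper instead argues directly on $i_N$: the immersion property is obtained by $C^1$-approximating local coordinate functions so that the $de_i$ span each cotangent space, and injectivity is proved by a decreasing-compact-sets argument on $(i_N\times i_N)^{-1}(\Delta)$ away from an $\varepsilon$-neighborhood of the diagonal, using that eigenfunctions separate points (since any smooth $f$ equals its eigenfunction expansion). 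The trade-off: your proof is shorter and cleanly modular, but it imports two external inputs — the existence of an ambient embedding and the $C^1$-stability of embeddings, the latter of which you only sketch (the uniform local injectivity radius for all maps $C^1$-close to $F$ is exactly where the compactness work hides) — whereas the paper's proof is self-contained given the density lemma, which is the stated point of its ``functional analytic proof of Whitney''; since you supply the partition-of-unity embedding explicitly, there is no circularity for the statement as posed. Minor slip: in your last sentence the factorization should read $G=L'\circ i_{N'}$ (with $L'$ the matrix $L$ padded by zero columns), not $i_{N'}=L'\circ i_{N'}$.
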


\begin{proof}
We first prove that $i_N$ is an immersion then we show it is injective.

Fix some $N$.
To show that $i_N$ is an immersion, we proceed as in the proof of lemma~\ref{l:ample}. It suffices to show that
for all $x\in M$, $di_N(x)$ has maximal rank in other words
$$ x\mapsto (de_1,\dots,de_N)$$ has rank $\dim M=n$.
Equivalently, it means that near every $x_0\in M$, there is a neighborhood
$U$, $n$ sequences $(a^1_i)_{1\leqslant i\leqslant N},\dots,(a^n_i)_{1\leqslant i\leqslant N}$ 
such that
for all $x\in U$, the covectors 
$$\sum_{i=1}^Na^1_ide_i(x),\dots,\sum_{i=1}^Na^n_ide_i(x)$$ are linearly independent.
In other words, the $n$-form
$$(\sum_{i=1}^N a^1_ide_i)\wedge\dots\wedge (\sum_{i=1}^Na^n_ide_i) $$
never vanishes on $U$. But this is easy to achieve since
for every $x_0$, we can always find some sufficiently
small neighborhood 
$U$ of $x_0$ and functions $f_1,\dots,f_n$ so that
$df_1\wedge \dots\wedge df_n(x)\neq 0,\forall x\in U$.
And since the family $(e_i)_i$ spans a dense subspace of $C^\infty(M)$,
we can always find $N_0$ large enough so that
the vector space spanned by $(e_i)_{i\leqslant N_0} $ is very close to $f_1,\dots,f_n$ in
the $C^1$ topology.

The last part consists in proving that
the above map is injective for $N$ large enough. 
By the previous part, we can fix $N_0$ large enough so that $i_{N_0}$ is an immersion.
Since an immersion is locally injective and $M$ is compact, 
there is some $\varepsilon>0$, so that
for all $x\in M$, consider the ball $B(x,\varepsilon)$
centered around $x$ of radius $\varepsilon$, the map
$i_{N_0}:B(x,\varepsilon) \mapsto \mathbb{R}^{N_0}$
is injective. 
In other words if $i_{N_0}(x)=i_{N_0}(y)$ for some $(x,y)\in M^2$ then $x=y$ 
or $d(x,y)>\varepsilon$.
We denote by $\Delta_M$ the diagonal in $M\times M$.
Consider 
the map $i_{N_0}\times i_{N_0}:\left(M\times M\right)\setminus \Delta_M \mapsto \mathbb{R}^{N_0}\times \mathbb{R}^{N_0}$
which reaches the diagonal $\Delta\subset \mathbb{R}^{N_0}\times \mathbb{R}^{N_0}$ 
exactly at the
pair of points $(x,y)$ where $i_{N_0}$ fails to be injective.
First note that 
$(i_{N_0}\times i_{N_0})^{-1}(\Delta)=\{(x,y)\text{ such that }i_{N_0}(x)=i_{N_0}(y),d(x,y)\geqslant \varepsilon\}$ is a closed subset of $M\times M\setminus \Delta_M$. 
Since $M\times M\setminus \Delta_M$ is precompact, the closed subset $(i_{N_0}\times i_{N_0})^{-1}(\Delta)$ 
is compact.
Note by definition of $i_N$ that for all $N$,
$(i_{N+1}\times i_{N+1})^{-1}(\Delta)\subset (i_{N}\times i_{N})^{-1}(\Delta)$.
Hence consider the sequence $\left((i_N\times i_N)^{-1}(\Delta)\right)_N $ for $N\geqslant N_0$, it is 
a decreasing sequence of compact subsets in 
$M\times M\setminus\{(x,y)\in M^2 \text{ s.t. }d(x,y)<\varepsilon \}$.
By contradiction, assume that for all $N\geqslant N_0$, 
the map $i_N$ fails to be injective,
then it means that $(i_N\times i_N)^{-1}(\Delta)$ is 
non empty for all $N$, then 
for all $N$
there is a pair $(x_N,y_N)\in M\times M\setminus\{(x,y)\in M^2 \text{ s.t. }d(x,y)<\varepsilon \}$
such that $i_N(x_N)=i_N(y_N)$ for all $N$. By compactness of
$M\times M\setminus\{(x,y)\in M^2 \text{ s.t. }d(x,y)<\varepsilon \}$, 
we may assume that
the sequence $(x_N,y_N)_N$ converges to some element $(x,y)$ in 
$M\times M\setminus\{(x,y)\in M^2 \text{ s.t. }d(x,y)<\varepsilon \}$.
It follows by definition of $i_N$ that $e_i(x_k)=e_i(y_k)$ for all
$k\geqslant i$ and by continuity of the 
the functions $(e_i)_i$ and the fact that $x_k\rightarrow x,y_k\rightarrow y$, we also have that $e_i(x)=e_i(y)$ for all $i$.

On the other hand, we know that 
there is some smooth function $f$ such that
$f(x)\neq f(y)$. And since
$f=\sum_{i=1}^{+\infty} \langle f,e_i \rangle e_i$, this implies that $f(x)=\sum_{i=1}^{+\infty} \langle f,e_i \rangle e_i(x)=\sum_{i=1}^{+\infty} \langle f,e_i \rangle e_i(y)=f(y)$ which contradicts the fact that $f(x)\neq f(y)$.  
\end{proof}

\section{Background material on the theory of currents}
\label{a:currents}
In this appendix, we give formulas for the current of integration over submanifolds defined by systems 
of equations of the form $g_1=g_2=\ldots= g_d=0$ for some sufficiently nice functions $g_1,\ldots,g_d$. This kind of formulas can be found for instance in Schwartz's book~\cite{Sch66} -- section~V.$5$ (example $3$). 
The advantage of representing currents of integration with Dirac distributions is that they will then be easily integrated against the Gaussian measure 
$d\mu_\Lambda$ on $\mathcal{H}_\Lambda$ -- see section~\ref{s:proof}. For an introduction to the theory of currents, we also refer to the books of de Rham~\cite{dRh80} and Giaquinta--Modica--Soucek~\cite{GiMoSou98}.

\subsection{Currents on zero sets}

In mathematical physics, one is often interested in studying
subsets in $X$ given by equations $\{G=0\}$ for some $G=(g_i)_{1\leqslant i\leqslant d}$. In our article, we will encounter two classes of currents associated to this type of submanifolds.
The first class are currents \emph{representable by integration on some smooth submanifolds} which
form a subclass of the De Rham chains~-- see chapter $2$ in~\cite{dRh80}. 
Let $X$ be a smooth oriented manifold of dimension $m$, let $S$ be a smooth, oriented, compact submanifold of $X$ of dimension $d\leq m$. 
This defines an integration current  $[S]$ in $\ml{D}_{d}'(X)$ which acts on test forms $\omega\in\ml{D}_d(X)$ by
\begin{eqnarray} 
[S](\omega)=\int_{S} \omega.
\end{eqnarray}

The second class consists in Dirac distributions which are defined as follows. 
We denote by $\delta^d_0$ the usual Dirac distribution at $\{0\}$ on $\mathbb{R}^d$.
This is defined by 
\begin{equation}
\forall \varphi(x) dx^1\wedge \dots \wedge dx^d\in \mathcal{D}_d(\mathbb{R}^d), \la\delta^d_0,\varphi dx^1\wedge \dots \wedge dx^d\ra_{\IR^d}=\varphi(0).
\end{equation}
Now let us define the Dirac distribution $G^*(\delta_0^d)=\delta_0^{d}\circ G$ supported by the zeros of a function $G:X\rightarrow\IR^d$.

\begin{prop}\label{p:Dirac}
Let $G=(g_i)_{1\leqslant i\leqslant d}\in C^\infty(X)$ 
be such that, for all $x\in X$ satisfying $g_1(x)=\dots=g_d(x)=0$, the linear forms
$d_xg_1(x),\dots,d_xg_d(x)$ are linearly independent. 
Then 
\begin{enumerate}
\item the pull--back of $\delta^d_0\in\mathcal{D}^\prime_d(\mathbb{R}^d)$ by $G$
which is denoted by $G^*\delta_0^d$ (or also $\delta^d_0\circ G$) is well--defined, 
\item the wave front set of $G^*\delta_0^d$ is contained in 
$$N^*(\{G=0\})=\left\{(x,\xi)\in T^*X: G(x)=0, \xi=\tau\circ d_xG\ \text{for some}\ \tau\neq 0\in\IR^d\right\}$$
\item for every sequence $(T_p)_p$ which converges
to $\delta^d_0$ in $\mathcal{D}^\prime_{\{0\}\times(\IR^d)^*}(\mathbb{R}^d)$,
$G^*T_p \rightarrow G^*\delta^d_0$ in the normal topology of $\mathcal{D}^\prime_{N_0^*\{G=0\}}(X)$
which implies convergence in the weak topology of $\mathcal{D}^\prime(X)$.
\end{enumerate}
\end{prop}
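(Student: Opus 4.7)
The plan is to derive all three statements from a single input: Hörmander's pullback theorem (Theorem~\ref{t:hormander-pull-back} above), combined with its sequential continuity version, applied to the distribution $\delta_0^d$ on $\mathbb{R}^d$. The strategy is to verify the only hypothesis of that theorem, namely the transversality condition $N_G \cap WF(\delta_0^d) = \emptyset$, and then to read off (1), (2), (3) essentially as corollaries.

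First I would recall that $WF(\delta_0^d) = \{0\} \times (\mathbb{R}^d)^*$, i.e.\ the conormal of the origin minus its zero section. By definition,
\[
N_G = \{(G(x),\tau)\in \mathbb{R}^d\times (\mathbb{R}^d)^* : \tau\circ d_xG = 0\}.
\]
An element $(y,\tau)$ lying in both $N_G$ and $WF(\delta_0^d)$ satisfies $y=0$ (so $G(x)=0$), $\tau\neq 0$ and $\tau\circ d_xG=0$. But the hypothesis asserts that on $\{G=0\}$ the differentials $d_xg_1,\ldots,d_xg_d$ are linearly independent, which means $d_xG\colon T_xX\to \mathbb{R}^d$ is surjective. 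Its transpose is therefore injective, so $\tau\circ d_xG=0$ forces $\tau=0$, a contradiction. Hence the intersection is empty. By Theorem~\ref{t:hormander-pull-back}, the pullback $G^*\delta_0^d$ is well-defined, proving~(1).

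Next, the same theorem provides the wave front bound
\[
WF(G^*\delta_0^d)\subset G^*WF(\delta_0^d) = \{(x,\tau\circ d_xG) : G(x)=0,\ \tau\in(\mathbb{R}^d)^*\setminus\{0\}\}.
\]
Using injectivity of the transpose of $d_xG$ once more, we have $\tau\circ d_xG\neq 0$ whenever $\tau\neq 0$, so every such covector is automatically nonzero on $T^*X$. Rewriting $\xi := \tau\circ d_xG$ gives exactly $N^*(\{G=0\})$ as defined in the statement, which establishes~(2).

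Finally, for~(3), I would invoke the sequential continuity clause of Hörmander's pullback theorem (Theorem~8.2.4 of~\cite{Ho90}, see also~\cite{BrDaHe14}): the pullback operation is continuous from $\mathcal{D}'_\Gamma(\mathbb{R}^d)$ to $\mathcal{D}'_{G^*\Gamma}(X)$ in the respective normal (Hörmander) topologies, provided $\Gamma\cap N_G=\emptyset$. Here $\Gamma = \{0\}\times(\mathbb{R}^d)^*$ is precisely $WF(\delta_0^d)$ and the transversality has already been checked, so any $T_p\to\delta_0^d$ in $\mathcal{D}'_\Gamma(\mathbb{R}^d)$ satisfies $G^*T_p\to G^*\delta_0^d$ in $\mathcal{D}'_{N^*(\{G=0\})}(X)$, and in particular weakly in $\mathcal{D}'(X)$. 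There is no serious obstacle here: the only nontrivial geometric input is the transversality check above, everything else is a direct citation of a standard microlocal result, which is why the statement is deliberately quoted as a package.
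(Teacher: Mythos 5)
Your proposal is correct and follows essentially the same route as the paper: check that $N_G$ misses $WF(\delta_0^d)=\{0\}\times(\IR^d)^*$ using the linear independence of the $d_xg_i$ on $\{G=0\}$, then read off well-definedness and the wave front bound from H\"ormander's pull-back theorem, and obtain (3) from the sequential continuity of the pull-back in the normal topology. Your explicit verification of the transversality (via surjectivity of $d_xG$ and injectivity of its transpose) simply spells out what the paper states in one line.
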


\begin{rema}
For the notion of wave front set and for the precise definition of the set $\ml{D}_{\Gamma}'(X)$ of distributions whose wave front set is contained in $\Gamma$, we refer the reader to section $8.2$ of~\cite{Ho90} (see also~\cite{BrDaHe14a}) and for the definition of the normal topology on $\ml{D}_{\Gamma}'(X)$, we refer to \cite{BrDaHe14}. 
\end{rema}

\begin{proof}
Let $N_G=\{(y;\eta)\in T^*\mathbb{R^d} \text{ s.t. } G(x)=y\text{ and } \eta\circ d_xG=0 \text{ for some }x\in X  \}$ be the normal set of the map $G$. By assumption, $N_G$ does not meet the set $\{0\}\times(\IR^d)^*\subset T^*\mathbb{R}^d$
which is the wave front set of $\delta^d_0$. Therefore
by the pull--back Theorem of H\"ormander -- Th.~\ref{t:hormander-pull-back} below, $G^*\delta^d_0=\delta^d\circ G$ is well--defined
and its wave front set is included $G^*(\{0\}\times(\IR^d)^*)=N_0^*(\{G=0\})$.
Finally, the last claim follows from the sequential continuity of the pull-back operation -- Th.~$8.2.4$ in~\cite{Ho90} or~\cite{BrDaHe14}.
\end{proof}

Now the natural question which arises is how the integration current on $S=\{G=0\}$ and the
delta form $G^*\delta_0^d=\delta_0^d\circ G$ are related to each other.
Note that the Dirac distribution of the submanifold $S=\{G=0\}$ is obviously not 
the same as the integration current $[S]$ because we integrate \textbf{top forms} against $\delta_0^d\circ G$
but only $(m-d)$ forms on $[S]$.

%

\subsection{Orientations on submanifolds.}\label{r:orientation}
Before comparing these currents, we need to discuss briefly orientability questions. For any smooth manifold $X$ (orientable or not),
there exists a smooth
principal $\mathbb{Z}_2$ bundle $\text{O}_X\mapsto X$
called the orientation bundle.
Recall that if $X$
is an orientable manifold, then there exists 
a volume form $\omega\in \Gamma(X,\Lambda^nT^*X)$ where 
$n=\dim X$ or equivalently there exists 
a global section of $\text{O}_X$. 
Indeed, every 
choice of volume form $\omega$ on $X$
defines canonically a global section $[\omega]$ of the orientation
bundle $\text{O}_X$, also $[\varphi\omega]=[\omega]$ for all $\varphi>0$. 
Now, if we are given a submanifold $S\subset X$ defined
by equations $g_1=\dots=g_d=0$ and if
$X$ is oriented by $[\omega_1]\in \text{O}_X$ and $S$ by $[\omega_2]\in \text{O}_S$, we
choose representatives $\omega_1\in \Gamma(X,\Lambda^nT^*X)$ and $\omega_2\in \Gamma(S,\Lambda^{n-d}T^*S) $.
Then we say that the orientation of $dg_1\wedge \dots \wedge dg_d$
is compatible with the pair of orientations $[\omega_1],[\omega_2]$
if
$dg_1\wedge \dots \wedge dg_d\wedge \omega_2|_S=\varphi\omega_1|_S$
for some smooth function $\varphi$ s.t. $\varphi|_S>0$.

\subsection{Relation between the different types of currents}

In the next proposition, we give a concrete description of the action of
Dirac distributions $G^*\delta_0^d$ on differential forms.

\begin{prop}\label{deltadivisionprop}
Let $G=(g_i)_{1\leqslant i\leqslant d}$ be a smooth map from $X$ to $\IR^d$ such that the differentials $d_xg_i$ are linearly independent
for every $x$ satisfying $g_1(x)=\dots=g_d(x)=0$. 

Then, for any test form $\omega\in \mathcal{D}_m(X)$ and for any form $\alpha$ in $\ml{D}_{m-d}(X)$ 
such that $dg_1\wedge \dots\wedge dg_d\wedge\alpha=\omega$, one has
\begin{eqnarray}
\left\la G^*\delta^d_0,\omega\right\ra = \int_{\{G=0\}} \alpha.
\end{eqnarray}
\end{prop}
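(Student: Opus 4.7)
The plan is to prove the identity by reducing to a local computation in adapted coordinates, using the fact that both sides are determined by local data near $\{G=0\}$.

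First, I would check that the right-hand side is well-defined, independent of the choice of $\alpha$. If $\alpha_1,\alpha_2$ both satisfy the wedge product equation, then $dg_1\wedge\cdots\wedge dg_d\wedge(\alpha_1-\alpha_2)=0$. By the rank assumption, near any point of $\{G=0\}$ the form $\alpha_1-\alpha_2$ lies in the ideal generated by the one-forms $dg_1,\ldots,dg_d$. Since the tangent space $T_x\{G=0\}$ coincides with the common kernel of these one-forms, any such form pulls back to zero under the inclusion $\{G=0\}\hookrightarrow X$, hence contributes nothing to $\int_{\{G=0\}}\alpha$.

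Next, I would localize. By the support property contained in Proposition~\ref{p:Dirac} (the wave front set is contained in $N^*(\{G=0\})$, in particular $\operatorname{supp} G^*\delta_0^d\subset\{G=0\}$) and the obvious analogous property for $[\{G=0\}]$, both sides vanish when $\omega$ is supported away from $\{G=0\}$. Using a partition of unity, I can therefore assume $\omega$ is compactly supported in an arbitrarily small tubular neighborhood of $\{G=0\}$ and work in one local chart around a point of $\{G=0\}$. Using the rank assumption and the implicit function theorem, I would introduce local coordinates $(y^1,\ldots,y^d,z^1,\ldots,z^{m-d})$ on this neighborhood in which $g_i=y^i$ for $1\leq i\leq d$, so that $G$ becomes the projection $(y,z)\mapsto y$ and $\{G=0\}=\{y=0\}$ is parametrized by $z$. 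Orientations can be matched as explained in paragraph~\ref{r:orientation} by choosing the coordinates so that $dy^1\wedge\cdots\wedge dy^d\wedge dz^1\wedge\cdots\wedge dz^{m-d}$ represents the orientation of $X$ compatibly with the given orientation of $\{G=0\}$.

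Then I would do the explicit computation. Writing $\omega=\varphi(y,z)\,dy^1\wedge\cdots\wedge dy^d\wedge dz^1\wedge\cdots\wedge dz^{m-d}$, a natural choice is $\alpha=\varphi(y,z)\,dz^1\wedge\cdots\wedge dz^{m-d}$ (all other choices differ by elements of the ideal generated by $dy^1,\ldots,dy^d$, which is harmless by the first step). On the one hand,
\begin{equation*}
\int_{\{G=0\}}\alpha=\int_{\mathbb{R}^{m-d}}\varphi(0,z)\,dz^1\cdots dz^{m-d}.
\end{equation*}
On the other hand, to evaluate $\langle G^*\delta_0^d,\omega\rangle$ I would use the sequential continuity of the pullback operation stated in Proposition~\ref{p:Dirac}: pick an approximation $\rho_\varepsilon(y)=\varepsilon^{-d}\rho(y/\varepsilon)$ with $\rho\in C_c^\infty(\mathbb{R}^d)$, $\int\rho=1$, which converges to $\delta_0^d$ in $\mathcal{D}'_{\{0\}\times(\mathbb{R}^d)^*}(\mathbb{R}^d)$. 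Then $G^*\rho_\varepsilon\to G^*\delta_0^d$ weakly, so
\begin{equation*}
\langle G^*\delta_0^d,\omega\rangle=\lim_{\varepsilon\to 0}\int\rho_\varepsilon(y)\varphi(y,z)\,dy\,dz=\int\varphi(0,z)\,dz,
\end{equation*}
by the standard delta convergence in the $y$-variable and dominated convergence in $z$. The two expressions agree, which proves the formula in one chart, and by the partition of unity argument they agree globally.

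The main difficulty is the bookkeeping at the interface of these steps: making sure that the orientation of $\{G=0\}$ induced by our chart truly matches the one used in $\int_{\{G=0\}}\alpha$, and that the partition of unity decomposition is compatible with the representation $\omega=dg_1\wedge\cdots\wedge dg_d\wedge\alpha$ (i.e., that the local forms $\alpha_i$ coming from $\omega_i=\chi_i\omega$ can be assembled to recover the global identity). The first-step argument on well-definedness modulo the ideal generated by the $dg_i$'s is what makes this last point routine.
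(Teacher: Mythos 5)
Your proposal is correct and follows essentially the same route as the paper: reduce by a partition of unity to adapted local coordinates in which $G$ becomes the linear projection $(y,z)\mapsto y$, identify $G^*\delta_0^d$ there with $\delta_0^d\otimes 1$ (your mollifier argument is just an explicit instance of the sequential continuity of the pull-back invoked in Proposition~\ref{p:Dirac}), and observe that any admissible $\alpha$ restricts on $\{G=0\}$ to the same form, so the right-hand side is unambiguous. The paper's proof does exactly this, handling the independence of $\alpha$ at the end of the linear model computation rather than up front.
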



\begin{proof}

We first prove the claim 
in the case where $g=(g_i)_{1\leqslant i\leqslant d}$
is a linear map from $\mathbb{R}^n$ to $\mathbb{R}^d$ of maximal rank.
Then one can choose $(g_1,\dots,g_d)$ to be the first $d$ 
components of some system of linear coordinate functions
$(g_1,\dots,g_d,\dots,g_n)$ on $\mathbb{R}^n$, in this case $G$ is just the
linear projection $\mathbb{R}^n\mapsto \mathbb{R}^d$ on the first $d$ components
and the zero set 
$\{G=0\}$ is just $\{0\}\times\mathbb{R}^{n-d} \subset \mathbb{R}^n$. 
For any function $\psi$ on $\mathbb{R}^d$, the pull--back of $\psi$ by $G$
simply reads
$$G^*\psi(g_1,\dots,g_n)=\psi(g_1,\dots,g_d)\otimes 1(g_{d+1},\dots,g_n)$$
and therefore, by continuity of the pull--back operation~\cite{BrDaHe14a},
one has $\delta^d_0\circ G= \delta^d_0\otimes 1$. 
Any form $\omega$ has a unique 
representation in the basis $dg_1\wedge \dots \wedge dg_n$
as $$\omega=\omega_{1\dots n}(g_1,\dots,g_n)dg_1\wedge \dots \wedge dg_n$$ 
therefore~:
\begin{eqnarray*}
[\delta^d_0\circ G](\omega) &=& [\delta^d_0 \otimes 1(g_{d+1},\dots,g_n) ](\omega) \\
&=& \int_{\mathbb{R}^{n-d}} \omega_{1\dots n}(0,\dots,0,g_{d+1},\dots,g_n)dg_{d+1}\wedge \dots \wedge dg_n.
\end{eqnarray*}
Note that any form $\alpha$ such that $dg_1\wedge \dots\wedge dg_d \wedge \alpha=\omega$
will coincide with 
$$\omega_{1\dots n}(0,\dots,0,g_{d+1},\dots,g_n)dg_{d+1}\wedge \dots \wedge dg_n$$
when restricted to $\{G=0\}=\{0\}\times\mathbb{R}^{n-d}$. It follows that the pairing does not depend on 
the choice of $\alpha$.

Let us go back to the manifold case.
For any $m\in \{G=0\}$, let $U$ be some neighborhood of $m$
on which the differentials $dg_i$ are linearly independent.
Then one can choose $(g_1,\dots,g_d)$ to be the first $d$ 
components of some system of local coordinate functions
$(g_1,\dots,g_d,\dots,g_n)$ on $U$. 
What we just did was to define some local chart $\Phi: m\in U\subset X\mapsto (g_1(m),\dots,g_n(m))\in \mathbb{R}^n$ on
$U\subset X$ such that
$G\circ \Phi^{-1}:\mathbb{R}^n\mapsto \mathbb{R}^d$ is just the linear projection 
on the first $d$ components. Then 
the distribution $\delta^d_0\circ \left( G\circ \Phi^{-1} \right)$
is well defined by our previous discussion since $G\circ \Phi^{-1}$ is linear.
Then we have the identity~:
\begin{eqnarray*}
\langle\left( G\circ \Phi^{-1} \right)^*\delta^d_0,\omega\rangle = \int_{\{G\circ \Phi^{-1}=0\}}\alpha\\ 
\forall \alpha
\text{ s.t. }dg_1\circ \Phi^{-1}\wedge \dots\wedge dg_d\circ \Phi^{-1}\wedge\alpha=\omega.
\end{eqnarray*}
We just constructed a delta distribution $\left( G\circ \Phi^{-1} \right)^*\delta^d_0$ 
supported by some linear subspace and
finally, to go back to the open subset $U$ in the manifold $X$, 
we must pull--back the distribution $\left( G\circ \Phi^{-1} \right)^*\delta^d_0$ 
by $\Phi$. By the pull--back Theorem~\ref{t:hormander-pull-back}, this yields 
$\Phi^*\left( G\circ \Phi^{-1} \right)^*\delta^d_0=G^*\delta^d_0$ and, for every $\alpha
\text{ s.t. }dg_1\wedge \dots\wedge dg_d\wedge\alpha=\omega$,
\begin{eqnarray*}
\langle G^*\delta^d_0,\omega\rangle = \int_{\{G=0\}}\alpha.
\end{eqnarray*}
\end{proof}

 In fact, using the notations of paragraph~\ref{r:orientation} concerning the orientation of submanifolds
defined by systems of equations, the following corollary holds~: 
\begin{coro}\label{lemmadeltasubmanifolds}
Let $X$ be a smooth oriented manifold with orientation $[\omega_1]$, let
\begin{eqnarray*}
G:=(g_i)_{1\leqslant i\leqslant d}:X\rightarrow\IR^d
\end{eqnarray*}
be a smooth function such that the differentials $d_xg_i$ are linearly independent 
for every $x\in X$ satisfying $g_1(x)=\dots=g_d(x)=0$. Set $S$ to be the submanifold in $X$ defined by the regular system of equations
$\{g_1=\dots=g_d=0\}$ and oriented by $[\omega_2]$. If $dg_1\wedge \dots\wedge dg_d$ has
orientation compatible  
with the pair of orientations $[\omega_1],[\omega_2]$
then one has
\begin{equation}
[S]=G^*(\delta_0^d)\ dg_1\wedge \dots \wedge dg_d. 
\end{equation}
\end{coro}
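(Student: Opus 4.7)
My plan is to prove the identity as an equality of $(m-d)$-dimensional currents on $X$ (where $m = \dim X$) by pairing both sides with an arbitrary test form $\beta \in \mathcal{D}_{m-d}(X)$ and showing the pairings agree. The bulk of the work has already been done in Proposition~\ref{deltadivisionprop}, so the argument will essentially be a rewriting exercise together with an orientation check.

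First I would unpack the wedge product on the right-hand side. By the very definition of the product of a scalar distribution with a smooth form, and since $dg_1\wedge\ldots\wedge dg_d$ is a smooth $d$-form (not just on $\{G=0\}$ but globally on $X$), one has
$$\bigl\langle G^*(\delta_0^d)\,dg_1\wedge\ldots\wedge dg_d,\,\beta\bigr\rangle = \bigl\langle G^*(\delta_0^d),\, dg_1\wedge\ldots\wedge dg_d\wedge\beta\bigr\rangle,$$
and the pairing on the right makes sense because $\omega := dg_1\wedge\ldots\wedge dg_d\wedge\beta$ is a compactly supported $m$-form on the oriented manifold $X$, which matches the setting of Proposition~\ref{deltadivisionprop}.

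Next, I would apply Proposition~\ref{deltadivisionprop} directly to this $\omega$, choosing $\alpha := \beta$ as the ``division" form (the identity $dg_1\wedge\ldots\wedge dg_d\wedge\alpha = \omega$ holds tautologically). This immediately yields
$$\bigl\langle G^*(\delta_0^d),\, dg_1\wedge\ldots\wedge dg_d\wedge\beta\bigr\rangle \;=\; \int_{\{G=0\}} \beta,$$
and the integral on the right is exactly $\langle [S],\beta\rangle$ provided the orientation of $S = \{G=0\}$ used here coincides with $[\omega_2]$.

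The only step requiring care is this orientation check, which I expect to be the main (minor) obstacle. Inspecting the proof of Proposition~\ref{deltadivisionprop}, one sees that the orientation implicitly attached to $\{G=0\}$ there is the one induced by completing $(g_1,\ldots,g_d)$ into a positively oriented local coordinate system on $(X,[\omega_1])$; concretely, a volume form $\omega_2^{\mathrm{ind}}$ on $\{G=0\}$ is induced by requiring $dg_1\wedge\ldots\wedge dg_d\wedge\omega_2^{\mathrm{ind}} = \omega_1|_S$. The hypothesis of the corollary is precisely that $dg_1\wedge\ldots\wedge dg_d\wedge\omega_2|_S = \varphi\,\omega_1|_S$ with $\varphi > 0$ along $S$, which is exactly the statement $[\omega_2] = [\omega_2^{\mathrm{ind}}]$ in the orientation bundle $\mathrm{O}_S$. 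Hence $\int_{\{G=0\}}\beta = \int_S \beta = \langle [S],\beta\rangle$. Since $\beta$ was arbitrary, the two currents coincide, which concludes the proof.
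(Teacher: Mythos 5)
Your proposal is correct and follows essentially the same route as the paper's own (very terse) proof: pair with a test form, move the smooth factor $dg_1\wedge\dots\wedge dg_d$ across the duality bracket, and apply Proposition~\ref{deltadivisionprop} with the test form itself as the division form $\alpha$. Your explicit orientation check, identifying the hypothesis of compatibility with the orientation implicitly used in the proof of Proposition~\ref{deltadivisionprop}, is a welcome elaboration of a point the paper leaves implicit, but it does not change the argument.
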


\begin{proof} We note that this corollary is proved in~\cite{Sch66} (Section V.$5$) when $d=1$. It is in fact a direct consequence of the property of the Dirac distribution $\delta_0^d\circ G$
established in proposition~\ref{deltadivisionprop}, i.e.
$$\left\langle G^*\delta_0^d dg_1\wedge\dots\wedge dg_d,\alpha\right\rangle=\left\langle G^*\delta_0^d , dg_1\wedge\dots\wedge dg_d\wedge\alpha\right\rangle=\int_{S} \alpha. $$
\end{proof}


\begin{thebibliography}{99}
\bibitem{Al06} S.~Alesker, \emph{Theory of valuations on manifolds II}, Adv. Math. 207 (2006), 420--454.
\bibitem{Al07} S.~Alesker, \emph{Theory of valuations on manifolds: a survey}, Geom. Funct. Anal. 17 (2007), no. 4, 1321--1341.
\bibitem{AshokDouglas} S.~Ashok and M.R.~Douglas. \emph{Counting flux vacua}, Journal of High Energy Physics 2004(01): p.~060.
\bibitem{Be85} P. B\'erard, \emph{Volume des ensembles nodaux des fonctions propres du laplacien}, Bony--Sj\"ostrand-Meyer seminar, 1984--1985 (Palaiseau), \'Ecole polytechnique, 1985, Exp. No.XIV, pp. 1--9.
\bibitem{Bz87} F.A.~Berezin, \emph{Introduction to Superanalysis}, Math. Phys. and Appl. Math., 9 D. Reidel Publishing Co., Dordreeht-Boston (1987).
\bibitem{Bernig} A.~Bernig, \emph{The normal cycle of a compact definable set}, Israel Journal of Mathematics, 159(1)  (2007), 373-411.
\bibitem{B77} M.~Berry, \emph{Regular and irregular semiclassical wavefunctions}, J. Phys. A 10 (1977), no. 12, 2083--2091.
\bibitem{B02} M.~Berry, \emph{Statistics of nodal lines and points in chaotic quantum billiards: perimeter corrections, fluctuations, curvature}, J. Phys. A 35 (2002), 3025--3038.
\bibitem{Bin04} X.~Bin, \emph{Derivatives of the spectral function and Sobolev norms of eigenfunctions on a closed Riemannian manifold}, Ann. Global. Anal. and Geom. $\mathbf{26}$, 231--252 (2004)
\bibitem{BlShZe00} P.~Bleher, B.~Shiffman and S.~Zelditch, \emph{Universality and scaling of correlations between zeros on complex manifolds}. Invent. Math. 142 (2000), no. 2, 351--395.
\bibitem{BlShZe01} P.~Bleher, B.~Shiffman and S.~Zelditch, \emph{Universality and scaling of zeros on symplectic manifolds}. Random matrix models and their applications, 31--69, Math. Sci. Res. Inst. Publ., 40, Cambridge Univ. Press, Cambridge, 2001.
\bibitem{ZelditchBleherShiffman} P.~Bleher, B.~Shiffman and S.~Zelditch, \emph{Correlations between zeros and supersymmetry. } Communications in Mathematical Physics, 2001, vol. 224, no 1, p. 255-269.
\bibitem{BluGnSm02} G.~Blum, S.~Gnutzmann and U.~Smilansky, \emph{Nodal domain statistics--a criterion for quantum chaos}, Phys. Rev. Lett. 88 114101 (2002)
\bibitem{BogDuSc07} E.~Bogomolny, R.~Dubertrand and C.~Schmit, \emph{SLE description of the nodal lines of random wavefunctions}. J. Phys. A 40 (2007), no. 3, 381--395
\bibitem{Bo06} M.~B\'ona, \emph{A walk through combinatorics}, 2nd Edition, World Scientific Publishing, (2006)
\bibitem{BrDaHe14a} C.~Brouder, V.~Dang, F.~Helein, \emph{A smooth introduction to the wavefront set}, to appear in Journal of Math. Physics, arXiv:1404.1778 (2014)
\bibitem{BrDaHe14} C.~Brouder, V.~Dang, F.~Helein, \emph{Boundedness and continuity of the fundamental operations on distributions having a specified wave front set. (with a counter example by Semyon Alesker)}, arXiv:1409.7662 (2014)
\bibitem{Br78} J. Br\"uning, \emph{\"Uber Knoten Eigenfunktionen des Laplace-Beltrami Operators}. Math. Z. \textbf{158} (1978), 15--21.
\bibitem{CanSa14} Y.~Canzani and P.~Sarnak, \emph{On the topology of the zero sets of monochromatic random waves}, arXiv:1412.4437 (2014)
\bibitem{ChaPi82} J.~Chazarain, A.~Piriou, \emph{Introduction to the theory of linear differential equations}, Studies in mathematics and its applications, Vol. 14, North Holland Publishing (1982)
\bibitem{Ch44} S.S.~Chern, \emph{A Simple Intrinsic Proof of the Gauss-Bonnet Formula for Closed Riemannian Manifolds}. Ann. of Math. (2) 45, (1944), 747--752.
\bibitem{Ch45} S.S.~Chern, \emph{On the curvatura integra in a Riemannian manifold}. Ann. of Math. (2) 46, (1945), 674--684.
\bibitem{CM} T. Colding and W. P. Minicozzi II, \emph{Lower bounds for nodal sets of eigenfunctions}. Comm. Math. Phys. \textbf{306} (2011), no. 3, 777--784.
\bibitem{dRh80} G.~de Rham, \emph{Differentiable manifolds: Forms, Currents, Harmonic Forms}, Springer (1980)
\bibitem{Dis11} M.~Disertori, \emph{Mod\`eles de matrices al\'eatoires et supersym\'etrie}, Habilitation \`a diriger des recherches, available at http://lmrs.univ-rouen.fr/Persopage/Disertori/publications.html (2011)
\bibitem{Do92} R. T. Dong, \emph{Nodal sets of eigenfunctions on Riemann surfaces}. J. Differential Geom. \textbf{36} (1992), no. 2, 493--506.
\bibitem{DF88} H. Donnelly and C. Fefferman, \emph{Nodal sets of eigenfunctions on Riemannian manifolds}. Invent. Math. \textbf{93} (1988), 161--183.
\bibitem{DF90} H. Donnelly and C. Fefferman, \emph{Nodal sets for eigenfunctions of the Laplacian on surfaces}. J. Amer. Math. Soc. \textbf{3} (1990), 332--353.
\bibitem{ZelditchDouglasShiffman3} M.R.~Douglas, B.~Shiffman and S.~Zelditch
\emph{Critical points and supersymmetric vacua, III: String/M models.} Communications in Mathematical Physics, 2006, vol. 265, no 3, p. 617-671.
\bibitem{LosevI} E.~Frenkel, A.~Losev and N.~Nekrasov, \emph{Instantons beyond topological theory. I}, Journal of the Institute of Mathematics of Jussieu, 10(03) (2011), 463-565.
\bibitem{Fu} J. H.~Fu, \emph{Curvature measures of subanalytic sets}, American Journal of Mathematics (1994), 819-880.
\bibitem{GaWe14a} D.~Gayet and J.Y.~Welschinger, \emph{Expected topology of random real algebraic submanifolds}, Journal of the Institute of Mathematics of Jussieu, online (2014)
\bibitem{GaWe14b} D.~Gayet and J.Y.~Welschinger, \emph{Betti numbers of random nodal sets of elliptic pseudo-differential operators }, arXiv:1406.0934 (2014)
\bibitem{GaWe14c} D.~Gayet and J.Y.~Welschinger,
\emph{Lower estimates for the expected Betti numbers of random real hypersurfaces}, J. Lond. Math. Soc. (2)
90
(2014), no. 1, 105--120
\bibitem{GaWe15} D.~Gayet and J.Y.~Welschinger, \emph{Universal components of random nodal sets }, arXiv:1503.01582, to appear Comm. Math. Phys. (2015)
\bibitem{GiMoSou98} M.~Giaquinta, G.~Modica, J.~Soucek, \emph{Cartesian Currents in the Calculus of Variations I}, Springer-Verlag Berlin Heidelberg (1998)
\bibitem{GuSt99} V.~Guillemin, S.~Sternberg, \emph{Supersymmetry and equivariant de Rham theory}, Springer-Verlag Berlin Heidelberg (1999)
\bibitem{HanLi} Q. Han and F.H. Lin, \emph{Nodal sets of solutions of elliptic differential equations}, book in preparation (online at http://www.nd.edu/$\sim$qhan/nodal.pdf).
\bibitem{HaSi89} R. Hardt and L. Simon, \emph{Nodal sets for solutions of elliptic equations}. J. Differential Geom. 30 (1989), no. 2, 505--522
\bibitem{Har06} M. Hardy, \emph{Combinatorics of partial derivatives}.  Electronic Journal of Combinatorics 13 (1): R1 (2006)
\bibitem{HS12} H. Hezari and C. D. Sogge,  \emph{A natural lower bound for the size of nodal sets}. Anal. PDE \textbf{5} (2012), no. 5, 1133--1137.
\bibitem{Ho68} L.~H\"ormander, \emph{The spectral function of an elliptic operator}, Acta Math. 121 (1968), 193--218.
\bibitem{Ho90} L.~H\"ormander, \emph{The Analysis of Linear Partial Differential Operators I. Distribution Theory and Fourier Analysis}, second ed., Springer Verlag, Berlin, 1990
\bibitem{JeLeb99} D.~Jerison and G.~Lebeau, \emph{Nodal sets of sums of eigenfunctions}. Harmonic analysis and partial differential equations (Chicago, IL, 1996), 223--239,
Chicago Lectures in Math., Univ. Chicago Press, Chicago, IL, 1999
\bibitem{Kac43} M.~Kac, \emph{On the average number of real roots of a random algebraic equation}, Bull. AMS \textbf{49} (1943), 314--320
\bibitem{Kashiwaraindex} M.~Kashiwara, \emph{Index theorem for constructible sheaves}, Ast\'erisque , 130 (1985), 193-209.
\bibitem{KSsheaves} M.~Kashiwara and P.~Schapira, \emph{Sheaves on manifolds}, Springer Science
\bibitem{Kos93} E. Kostlan, \emph{On the distribution of roots of random polynomials}, From topology to computation: proceedings of the Smalefest (Berkeley, CA, 1990), Springer, New York (1993), 419--431
\bibitem{LerarioLundberg} Antonio  Lerario  and  Erik  Lundberg,
\emph{Statistics  on  Hilbert's  16th  problem}, Int.
Math. Res. Not.
(2015), no. 12, 4293--4321
\bibitem{Le14} T.~Letendre, \emph{Expected volume and Euler characteristic of random submanifolds}, arXiv:1408.2107 (2014)
\bibitem{LosevSli} A.~Losev and S.~Slizovskiy, \emph{New observables in topological instantonic field theories}, Journal of Geometry and Physics, 61(10) (2011), 1868-1880.
\bibitem{NaSo09} F.~Nazarov and M.~Sodin, \emph{On the number of nodal domains of random spherical harmonics}, Amer. J. Math. 131 (2009), no. 5, 1337--1357
\bibitem{NaSo15} F.~Nazarov and M.~Sodin, \emph{Asymptotic laws for the spatial distribution and the number of connected components of zero sets of Gaussian random functions
}, preprint arXiv:1507.02017 (2015)
\bibitem{Ni11} L.~Nicolaescu, \emph{Critical sets of random smooth functions on compact manifolds}, preprint  arXiv:1101.5990, to appear in Asian J. Math. (2011)
\bibitem{Park} H.~Park, \emph{Kinematic formulas for the real subspaces of complex space forms of dimension 2 and 3}, Phd Thesis (2002) 
\bibitem{RuWi08} Z.~Rudnick and I.~Wigman, \emph{On the volume of nodal sets for eigenfunctions of the Laplacian on the torus}, Annales Henri Poincar\'e 9 (2008), 109--130. 
\bibitem{SaWi15} P.~Sarnak and I.~Wigman, \emph{Topologies of nodal sets of random band limited functions}, arXiv:1510.08500 (2015).
\bibitem{Sch66} L.~Schwartz, \emph{Th\'eorie des distributions}, Hermann (1966)
\bibitem{ShZe99} B.~Shiffman, S.~Zelditch, \emph{Distribution of zeros of random and quantum chaotic sections of positive line bundles}. Comm. Math. Phys. 200 (1999), no. 3, 661--683
\bibitem{ShubSmale} M.~Shub, S.~Smale,
\emph{Complexity  of  Bezout's  theorem.  II.  Volumes  and  probabilities}. Computational algebraic geometry (Nice, 1992), Progr. Math., vol. 109,
Birkh\"auser Boston, 1993, pp. 267--285
\bibitem{So12} M.~Sodin, \emph{Lectures on random nodal portraits}, Lecture notes for a mini-course given at the St. Petersburg Summer School in Probability and Statistical Physics (June, 2012), available at: http://www.math.tau.ac.il/$\sim$sodin/
SPB-Lecture-Notes.pdf (2012)
\bibitem{SZ12}  C. D. Sogge and S. Zelditch, \emph{Lower bounds on the Hausdorff measure of nodal sets II}. Math. Res. Lett. \textbf{19} (2012), no. 6, 1361--1364.
\bibitem{Tao13} T.~Tao, \emph{Supercommutative gaussian integration, and the gaussian unitary ensemble}, avalaible at https://terrytao.wordpress.com/tag/berezin-integration/ (2013)
\bibitem{Takhtajan} L.~Takhtajan, \emph{Quantum mechanics for mathematicians.} Vol. 95. American Mathematical Soc., 2008.
\bibitem{Wi10} I.~Wigman, \emph{Fluctuations of the nodal length of random spherical harmonics}, Comm. Math. Phys. \textbf{298} (2010)
\bibitem{Y82} S.-T. Yau, \emph{Seminar on Differential Geometry}, Ann. of Math. Stud., \textbf{102}, Princeton Univ. Press, Princeton, N.J., 1982.
\bibitem{Ze09} S.~Zelditch, \emph{Real and complex zeros of Riemannian random waves}, Spectral analysis in geometry and number theory, Contemp. Math., vol. 484, Amer. Math. Soc., Providence,
RI, 2009, pp. 321--342.
\bibitem{Ze13} S. Zelditch, \emph{Eigenfunctions and nodal sets}. Geometry and topology, 237--308, Surv. Differ. Geom., \textbf{18}, Int. Press, Somerville, MA, 2013.
\end{thebibliography}
\end{document}